\let\smash=\wedge
\let\iso=\cong
\let\tensor=\otimes
\let\minus=\smallsetminus
\newcommand{\colim}{\operatornamewithlimits{colim}}
\newcommand{\hofib}{\operatornamewithlimits{hofib}}
\newcommand{\Hom}{\operatorname{Hom}}
\newcommand{\inthom}{\underline{\operatorname{Hom}}}
\newcommand{\Fr}{\operatorname{Fr}}
\newcommand{\Sing}{\operatorname{Sing}}
\newcommand{\Aff}{\mathbb{A}}
\newcommand{\PP}{\mathbb{P}}
\newcommand{\ZZ}{\mathbb{Z}}
\newcommand{\FF}{\mathbb{F}}
\newcommand{\NN}{\mathbb{N}}
\newcommand{\QQ}{\mathbb{Q}}
\newcommand{\CC}{\mathbb{C}}
\newcommand{\LL}{\mathbb{L}}
\newcommand{\one}{\mathbb{I}}
\newcommand{\op}{\mathrm{op}}
\newcommand{\id}{\mathrm{id}}
\newcommand{\Id}{\mathrm{Id}}
\newcommand{\Sm}{\mathrm{Sm}}
\newcommand{\SH}{\mathrm{SH}}
\newcommand{\Var}{\mathrm{Var}}
\newcommand{\sym}{\mathrm{sym}}
\newcommand{\Sdot}{\mathcal{S}_{\bullet}}
\newcommand{\wC}{\mathrm{w}\mathbf{C}}
\newcommand{\cC}{\mathrm{cof}\mathbf{C}}
\newcommand{\wM}{\mathrm{w}\mathbf{M}}
\newcommand{\cM}{\mathrm{cof}\mathbf{M}}
\newcommand{\fib}{\mathrm{fib}}
\newcommand{\cof}{\mathrm{cof}}
\newcommand{\Set}{\mathbf{Set}}
\newcommand{\Top}{\mathbf{Top}}
\newcommand{\sSet}{\mathbf{sSet}}
\newcommand{\M}{\mathbf{M}}
\newcommand{\N}{\mathbf{N}}
\newcommand{\R}{\mathbf{R}}
\newcommand{\Sp}{\mathbf{Spt}}
\newcommand{\SymSp}{\mathbf{SymSpt}}
\newcommand{\f}{\mathrm{fin}}
\newcommand{\g}{\mathrm{g}}
\newcommand{\du}{\mathrm{dual}}
\newcommand{\hf}{\mathrm{hfin}}
\newcommand{\ifin}{\mathrm{ifin}}
\newcommand{\ihf}{\mathrm{ihfin}}
\newcommand{\E}{\mathsf{E}}
\newcommand{\D}{\mathsf{D}}
\newcommand{\B}{\mathsf{B}}
\newcommand{\AAA}{\mathrm{A}}
\newcommand{\Tan}[1]{\mathcal{T}{({#1})}}
\newcommand{\Th}[1]{\mathrm{Th}{({#1})}}
\newcommand{\Spec}[1]{\mathrm{Spec}{({#1})}}
\theoremstyle{plain}
\newtheorem{theorem}[subsection]{Theorem}
\newtheorem{theorem*}{Theorem}
\newtheorem{lemma}[subsection]{Lemma}
\newtheorem{corollary}[subsection]{Corollary}
\newtheorem{proposition}[subsection]{Proposition}
\theoremstyle{definition}
\newtheorem{remark}[subsection]{Remark}
\newtheorem{defn}[subsection]{Definition}
\newtheorem{example}[subsection]{Example}
\begin{document}

\title{The Grothendieck ring of varieties and algebraic $K$-theory of spaces}
\author{Oliver R{\"o}ndigs\footnote{Institut f\"ur Mathematik, Universit\"at Osnabr\"uck, Germany}\, \footnote{Matematisk Institutt, Universitetet i Oslo, Norway}}
\date{Revised November 25, 2023}

\maketitle

\begin{abstract}
  Waldhausen's algebraic $K$-theory machinery is applied to Morel-Voevodsky
  $\mathbb{A}^1$-homotopy, producing an interesting 
  $\mathbb{A}^1$-homotopy type. Over a field $F$ of
  characteristic zero, its path components receive a 
  surjective ring homomorphism
  from the Grothendieck ring of varieties over $F$.
\end{abstract}

\section{Introduction}
\label{sec:introduction}

Waldhausen's approach to algebraic $K$-theory \cite{Waldhausen}
is of such a generality
(though not for its own sake) that it applies to a wide class of homotopy
theories. The choice here, as suggested by Waldhausen already in the
last century, is the $\Aff^1$-homotopy theory over a
Noetherian finite-dimensional base scheme $S$ introduced by 
Morel and Voevodsky \cite{mv}. Subject to an appropriate
finiteness condition (for which there are several choices),
the resulting homotopy type $\AAA(S)$
is nontrivial;
for example, it contains Waldhausen's algebraic $K$-theory of a point,
$\AAA(\ast)$, as a retract up to homotopy. Moreover, it can be viewed
as an $\Aff^1$-homotopy type in a natural way. The present paper is an
admittedly rather meagre attempt to advertise this $\Aff^1$-homotopy
type to algebraic geometers, although it might be more attractive to
homotopy theorists. Recall that almost by construction,
the path components of Waldhausen's $K$-theory provide
the universal Euler characteristic.

\begin{theorem*}
  Let $F$ be a field of characteristic zero. Sending a smooth
  projective variety to its natural class in $\pi_0\AAA(F)$
  defines a surjective ring homomorphism
  \[ K_0(\Var_F)\rightarrow \pi_0\AAA(F) \]
  from the Grothendieck ring of varieties over $F$.
\end{theorem*}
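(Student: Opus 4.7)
The plan is to exploit Bittner's presentation of the Grothendieck ring of varieties in characteristic zero. By Hironaka's resolution of singularities and the weak factorization theorem, $K_0(F)$ is the free abelian group on isomorphism classes of smooth projective $F$-varieties modulo $[\emptyset]=0$ and the blowup relation
\[
 [\mathrm{Bl}_Y X] - [E] \;=\; [X] - [Y]
\]
for $Y\hookrightarrow X$ a smooth closed immersion of smooth projective varieties with exceptional divisor $E$, the ring structure being induced by cartesian product. It therefore suffices to define the map on generators, verify this one relation in $\pi_0\AAA(F)$, check multiplicativity, and establish surjectivity.

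The natural assignment sends a smooth projective variety $X$ to the class $[X_+]\in\pi_0\AAA(F)$ of its pointed $\Aff^1$-homotopy type, regarded as a cofibrant object of the Waldhausen subcategory of (homotopy-)finite pointed motivic spaces underlying $\AAA(F)$. For the blowup relation I would invoke the fact that, for a smooth closed immersion $Y\hookrightarrow X$ with blowup $\mathrm{Bl}_Y X$ and exceptional divisor $E$, the square
\[
\xymatrix{ E_+ \ar[r]\ar[d] & (\mathrm{Bl}_Y X)_+ \ar[d] \\ Y_+ \ar[r] & X_+ }
\]
is a homotopy pushout in the pointed $\Aff^1$-homotopy category, a standard consequence of Morel--Voevodsky purity combined with the projective bundle formula for the normal bundle of $Y$ in $X$. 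In $\pi_0$ of a Waldhausen $K$-theory, a homotopy pushout square yields the additive identity $[(\mathrm{Bl}_Y X)_+] + [Y_+] = [E_+] + [X_+]$, which is exactly the desired blowup relation. Multiplicativity is automatic, because the symmetric monoidal smash product on pointed motivic spaces carries $X_+\smash Y_+$ to $(X\times_F Y)_+$ and descends to the ring structure on $\pi_0$.

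For surjectivity, note that $\pi_0\AAA(F)$ is generated as an abelian group by classes $[U_+]$ for $U$ a smooth $F$-scheme, because the underlying Waldhausen category of finite motivic spaces is obtained by iterated cofiber-sequence constructions starting from representables, and such constructions are absorbed by the $\pi_0$ relations. For each such $U$, Hironaka furnishes an open immersion into a smooth projective variety $\bar X$ with strict normal crossings complement $D=\bigcup_i D_i$ whose strata $D_I=\bigcap_{i\in I}D_i$ are all smooth projective. Repeatedly applying the cofiber sequence $U_+\to\bar X_+\to\bar X_+/U_+$ together with Morel--Voevodsky purity, which at each stage identifies the cofiber produced by the stratification with the Thom space of a normal bundle $\xi$ over a smooth projective stratum, and then invoking the projective bundle formula to rewrite each such Thom space as a difference $[P(\xi\oplus 1)_+]-[P(\xi)_+]$ of classes of smooth projective bundles, one expresses $[U_+]$ as an integer combination of classes of smooth projective varieties, proving surjectivity.

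The main obstacle I anticipate is the precise execution of step two: granting the blowup square as a homotopy pushout in the $\Aff^1$-homotopy category, one still has to realise it as a genuine cofiber square in the chosen Waldhausen subcategory, with all four corners simultaneously cofibrant-fibrant and with the top and left arrows honest cofibrations. This is a model-categorical coherence problem and is where the bulk of the technical work lies. A secondary challenge is the inductive bookkeeping in the surjectivity argument, which must be arranged so as to respect the ring structure and to interact correctly with the stratification-induced cofiber sequences.
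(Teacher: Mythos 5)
Your route is the one the paper actually takes: Bittner's presentation, the Morel--Voevodsky blow-up square to verify the single relation (the paper cites \cite[Remark 3.2.30]{mv}), the smash product of representables for multiplicativity, and, for surjectivity, a cell-induction reducing arbitrary generators to suspension spectra of smooth varieties followed by resolution of singularities. But two steps, as written, would fail. First, your generation step --- ``the underlying Waldhausen category of finite motivic spaces is obtained by iterated cofiber-sequence constructions starting from representables'' --- is only valid for the cellwise finiteness notion, i.e.\ for $I$-finite objects built by attaching finitely many generating cofibrations $\bigl(X\times(\partial\Delta^n\hookrightarrow\Delta^n)\bigr)_+$; that is exactly the category used in Theorem~\ref{thm:a-motivic-measure}, where the induction (together with the fact that suspension acts by $-1$ on $\pi_0$) yields generation by classes $\langle X_+\rangle$ of smooth $F$-schemes. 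A finite or homotopy finite object is in general only a retract of an $I$-finite one (Lemma~\ref{lem:fin-ifin}), and retracts are not absorbed by the two $\pi_0$-relations; the paper explicitly cautions that the measure need not be surjective for $\g\in\{\f,\hf\}$. So your parenthetical ``(homotopy-)finite'' choice of Waldhausen category breaks the surjectivity argument; you must work with the $I$-finite category (or supply a cofinality statement you do not have).

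Second, ``Hironaka furnishes an open immersion into a smooth projective variety $\bar X$'' is false when the smooth variety $U$ is not quasi-projective, since an open subscheme of a projective variety is quasi-projective, and the generators range over all smooth separated $F$-schemes of finite type. You need a preliminary reduction: cover $U$ by quasi-projective (e.g.\ affine) opens and use the Zariski Mayer--Vietoris relation in $\pi_0$, which follows from the descent statements of Proposition~\ref{prop:open-split} and Corollary~\ref{cor:Nisnevich-descent} (or directly from the elementary distinguished squares), to express $\langle U_+\rangle$ through quasi-projective pieces; this is precisely the role of the Zariski-cover step in the proof of Proposition~\ref{prop:finite-dualizable}, to which the paper's surjectivity argument defers. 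With these two repairs, your compactification/purity/projective-bundle induction is a correct and usefully explicit version of the reduction the paper only cites. A final remark: the obstacle you anticipate about making all four corners of the blow-up square fibrant is not one --- in a Waldhausen category it suffices to replace $E_+\to(\mathrm{Bl}_YX)_+$ and $Y_+\to X_+$ by simplicial mapping cylinders (which preserves $I$-finiteness) and to identify the two cofibers up to weak equivalence via the Morel--Voevodsky statement; no fibrancy is required.
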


See Theorem~\ref{thm:a-motivic-measure} for a precise version including the
appropriate finiteness condition.
This ring homomorphism refines several other motivic measures,
such as the topological Euler characteristic, the Hodge motivic measure,
and the Gillet-Soul\'e motivic measure. In these cases the ring
homomorphism is naturally induced on path
components by a map from the homotopy type $\AAA(F)$,
thereby solving at least \cite[Problems 7.3, 7.4, 7.5]{cwz} in a
natural way.
The constructions
\cite{campbell, zakharevich.perspectives} supply a homotopy type whose
path components is the Grothendieck ring of varieties over $F$.
However, the significance of its higher homotopy groups is not clear.
In the case of Waldhausen's original application of algebraic $K$-theory
to the geometry of manifolds, the higher homotopy groups
yield interesting information on their automorphism 
groups \cite{farrell-hsiang, rognes.wheven, rognes.whodd},
thanks to the following statement from \cite{waldhausen.concordance}.

\begin{theorem*}[Waldhausen]\label{thm:waldhausen}
  Let $M$ be a smooth manifold with possibly empty boundary.
  The homotopy type $\AAA(M)$ is defined as the Waldhausen $K$-theory of
  the category of finite cell complexes retractive over $M$.
  There is a splitting  
  \[\AAA(M) \simeq \Sigma^\infty M_+ \times \mathrm{Wh}(M) \]
  up to homotopy,
  where $\Sigma^\infty M_+$ is the stable homotopy type of $M$,
  and $\mathrm{Wh}(M)$ is the Whitehead spectrum of $M$, a double
  delooping of the spectrum of stable smooth pseudoisotopies of $M$.
\end{theorem*}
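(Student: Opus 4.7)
The plan is to follow Waldhausen's original strategy: realize $\Sigma^\infty M_+$ as the $K$-theory of a subcategory of "spherical" retractive spaces over $M$, construct a retraction of the corresponding inclusion of $K$-theory spectra, and define $\mathrm{Wh}(M)$ to be the complementary summand. The identification of $\mathrm{Wh}(M)$ with a double delooping of stable smooth pseudoisotopies is then a separate (and much deeper) input from manifold topology.

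First I would equip the category $\mathcal{R}_f(M)$ of finite retractive cell complexes over $M$ with its standard Waldhausen structure (cellular inclusions as cofibrations, weak homotopy equivalences over $M$ as weak equivalences), and single out the subcategory $\mathcal{R}_f^s(M)\subset\mathcal{R}_f(M)$ of objects built from $M$ by attaching cells along constant maps into $M$; equivalently, finite fiberwise wedges of spheres over $M$ at varying sections. Next I would identify $K(\mathcal{R}_f^s(M))\weq\Sigma^\infty M_+$ via a parametrized Barratt-Priddy-Quillen theorem: the group completion of the symmetric monoidal groupoid of finite spherical retractive spaces over $M$ is the infinite loop space underlying $\Sigma^\infty M_+$. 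Composed with the inclusion of Waldhausen categories $\mathcal{R}_f^s(M)\hookrightarrow\mathcal{R}_f(M)$, this yields a unit map $\Sigma^\infty M_+\to\AAA(M)$.

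Third, I would construct a retraction $\AAA(M)\to\Sigma^\infty M_+$ by parametrized stabilization: send a retractive cell complex $Y\to M$ to its fiberwise suspension spectrum over $M$. This defines an exact functor from $\mathcal{R}_f(M)$ to a Waldhausen category of finite retractive cell spectra over $M$, whose $K$-theory is again $\Sigma^\infty M_+$ by a further application of parametrized Barratt-Priddy-Quillen (now allowing cells in negative dimensions). Exactness of fiberwise $\Sigma^\infty$ for cofibre sequences of retractive spaces is the routine verification; the nontrivial check is that the composite $\mathcal{R}_f^s(M)\hookrightarrow\mathcal{R}_f(M)\to(\text{retractive cell spectra over }M)$ induces the identity on $K$-theory up to homotopy, which reduces to the fact that spherical retractive spaces are already stable objects in a suitable sense. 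Defining $\mathrm{Wh}(M)$ as the homotopy fiber of the resulting retraction then gives the asserted product decomposition up to homotopy.

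The main obstacle is the final identification of $\mathrm{Wh}(M)$ with a double delooping of the spectrum of stable smooth pseudoisotopies. This is the stable parametrized $h$-cobordism theorem: one must connect $\Omega^\infty\mathrm{Wh}(M)$ to the space $\mathcal{H}^s(M)$ of stable smooth $h$-cobordisms on $M$ through Waldhausen's identification of the Whitehead summand with a space of stable simple homotopy equivalences, and then invoke Hatcher's stability theorem relating $\mathcal{H}^s(M)$ to the stable pseudoisotopy space $\mathcal{P}^s(M)$ via a double loop shift. This passage between algebraic $K$-theory and differential topology, requiring both smoothing theory and considerable geometric input, is substantially deeper than the formal splitting argument above and is where the real weight of the theorem lies.
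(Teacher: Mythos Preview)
The paper does not prove this theorem at all: it is stated in the introduction as background, attributed to Waldhausen, and simply referenced to \cite{waldhausen.concordance}. There is nothing in the body of the paper to compare your proposal against.

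That said, your sketch is a reasonable outline of Waldhausen's actual argument. The ingredients you name---the spherical subcategory, the parametrized Barratt--Priddy--Quillen identification of its $K$-theory with $\Sigma^\infty M_+$, the stabilization retraction, and the stable parametrized $h$-cobordism theorem for the pseudoisotopy identification---are the right ones, and you correctly flag that the last step is the deep geometric input. One caution: the retraction is not usually phrased as landing in the $K$-theory of ``retractive cell spectra over $M$'' with a second Barratt--Priddy--Quillen step; Waldhausen's original construction is more delicate (via manifold models and the ``linearization'' or trace map), and the clean splitting you describe really requires the later machinery of Waldhausen--Jahren--Rognes or the cyclotomic trace framework. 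Your description of the composite being ``the identity up to homotopy'' hides genuine work. But since the paper itself supplies no proof, there is no divergence to report.
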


Taking path components of this splitting recovers the
s-cobordism theorem of Smale, Barden, Mazur, and Stallings.
Perhaps an $\Aff^1$-s-cobordism theorem for smooth varieties
over a field can be produced via the trace given in Section~\ref{sec:trace}
on the $\Aff^1$-homotopy type obtained from the algebraic $K$-theory
of dualizable motivic $T$-spectra.

\section{$K$-theory of model categories}
\label{sec:k-theory-model}

In \cite{Waldhausen}, Waldhausen generalized Quillen's $K$-theory machinery
to the setup of categories with cofibrations and weak equivalences,
henceforth called {\em Waldhausen
categories\/}. 
A Waldhausen category is a quadruple 
$(\mathbf{C},\ast,\wC,\cC)$, where $\mathbf{C}$ is
a pointed category with zero object $\ast$, a subcategory $\wC$ of
weak equivalences and a subcategory $\cC$ of cofibrations. Furthermore, 
$\ast \rightarrow A$ is always a cofibration, cobase changes
along cofibrations exist in $\mathbf{C}$, and the weak equivalences satisfy
the gluing lemma. The {\em algebraic\/} $K$-{\em theory\/}
of a Waldhausen category $(\mathbf{C},\ast,\wC,\cC)$ is the
spectrum
\begin{equation}\label{eq:a-general} 
  \AAA(\mathbf{C}) = (\wC,\mathrm{w}\Sdot \mathbf{C},\dotsc,\mathrm{w}\Sdot^{(n)}\mathbf{C},\dotsc )
\end{equation}
of pointed simplicial sets obtained by the diagonal of the nerve
of $n$-fold simplicial categories; the latter produced by iterated
applications of Waldhausen's $\Sdot$-construction. The structure maps
of~(\ref{eq:a-general}) are induced by the inclusion of the 1-skeleton.

\begin{defn}\label{def:a-equivalence}
  An exact functor $F\colon \mathbf{C}\rightarrow \mathbf{D}$ of
  Waldhausen categories is a $K$-{\em theory equivalence\/}
  if the induced map $\AAA(F)\colon \AAA(\mathbf{C})\rightarrow \AAA(\mathbf{D})$
  of spectra is a stable equivalence.
\end{defn}

All the categories with cofibrations and weak equivalences
in the following are obtained as full subcategories of a 
Quillen model category,
where the weak equivalences are determined by the model structure.
The cofibrations are either determined by the model structure, or by 
a slight variation.
References for model categories are \cite{Hovey:book, Hirschhorn}.
Algebraic $K$-theory
requires finiteness conditions, and suitable cofibrantly
generated model categories, as defined
in \cite[Definition 4.1]{Hovey:stable-model}
and \cite[Definition 3.4]{dro.enriched}, provide a convenient
setup for these.

\begin{defn}\label{defn:weakly-fin-gen}
  A model category $\M$ is {\em weakly finitely generated\/}
  if it is cofibrantly generated and satisfies the 
  following further requirements:
  \begin{enumerate}
    \item There exists a set $I$ of generating cofibrations
      with finitely presentable domains and codomains.
    \item There exists a set $J$ of acyclic cofibrations
      with finitely presentable domains and codomains detecting
      fibrations with fibrant codomain.
  \end{enumerate}
\end{defn}

Examples of weakly finitely generated model categories are
the usual model categories of (pointed) simplicial sets
(denoted $\sSet_\bullet$),
spectra of such (denoted $\Sp$), chain complexes over a ring, and suitable
model structures for $\Aff^1$-homotopy theory.
The latter is essentially
a consequence of the following statement.

\begin{proposition}\label{prop:weakly-fin-gen-inherit}
  Let $\M$ be a weakly finitely generated simplicial 
  model category, 
  and let $Z$ be a set of morphisms in $\M$ with finitely
  presentable domains and codomains. Suppose that
  tensoring with a finite simplicial set $L$ preserves
  finitely presentable objects.
  If the left Bousfield localization $\mathrm{L}_{Z} \M$ 
  exists, it is weakly finitely generated.
\end{proposition}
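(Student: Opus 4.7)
The plan is to keep the generating cofibrations unchanged and build an enlarged set of acyclic cofibrations adapted to $Z$. Since a left Bousfield localization preserves the cofibrations of $\M$, the set $I$ already serves as generating cofibrations for $\mathrm{L}_Z\M$, with the required finite presentability inherited for free. All the work goes into producing the set $J'$ for the acyclic cofibrations.

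I would take $J' = J \cup J_Z$. To construct $J_Z$, for each $f \colon A \to B$ in $Z$, factor $f$ in $\M$ as a cofibration $\tilde{f} \colon A \to \tilde{B}$ followed by a trivial fibration, arranging $\tilde B$ to be finitely presentable (this step is delicate; see below). Two-out-of-three forces $\tilde{f}$ to be a $Z$-local equivalence. Define $J_Z$ to consist of the pushout-products $\tilde f\,\square\,(\partial\Delta^n \hookrightarrow \Delta^n)$ for $n \geq 0$; the pushout-product axiom makes these cofibrations, and since $\mathrm{L}_Z\M$ is again simplicial they are $Z$-local equivalences. The hypothesis that tensoring with a finite simplicial set preserves finitely presentable objects guarantees that the domains and codomains of these pushout-products are themselves finitely presentable, as they are finite colimits of such tensors.

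It then remains to verify that $J'$ detects fibrations with fibrant codomain in $\mathrm{L}_Z\M$. The original $J$-part handles the underlying $\M$-fibration structure: an $\mathrm{L}_Z$-fibrant codomain is in particular $\M$-fibrant, RLP against $J$ characterizes $\M$-fibrations over such a codomain, and over an $\mathrm{L}_Z$-fibrant base these coincide with $\mathrm{L}_Z$-fibrations by the standard characterization of fibrations in a Bousfield localization. The $J_Z$-part enforces $Z$-locality of the codomain: having the RLP of $Y \to \ast$ against $\tilde{f}\,\square\,(\partial\Delta^n \hookrightarrow \Delta^n)$ for all $n\ge 0$ amounts to the simplicial mapping space $\mathrm{Map}(\tilde{B}, Y) \to \mathrm{Map}(A, Y)$ being a trivial Kan fibration, i.e.\ $Y$ being $Z$-local.

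The principal obstacle is the finitely-presentable factorization $A \to \tilde{B} \to B$. The small-object-argument factorization supplied by $I$ is a transfinite composition of pushouts of maps in $I$, and finitely presentable objects need not be closed under such colimits. In a simplicial setting one can circumvent this using a mapping cylinder of the form $\tilde{B} = A \otimes \Delta^1 \cup_A B$ applied to cofibrant models of $A$ and $B$, after which the tensor hypothesis delivers the needed finite presentability. Justifying that this construction yields an honest cofibration between finitely presentable objects in the stated generality, together with confirming the simplicial enrichment of $\mathrm{L}_Z\M$, is the technical heart of the argument.
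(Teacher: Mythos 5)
Your overall strategy is the one the paper intends (its proof is just a pointer to Hovey's Proposition 4.2): keep $I$ as the generating cofibrations, and enlarge $J$ by pushout-products of finitely presentable cofibration models of the maps in $Z$ with the inclusions $\partial\Delta^n\hookrightarrow\Delta^n$, using the tensor hypothesis and a mapping-cylinder factorization to retain finite presentability.

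However, the verification that $J'=J\cup J_Z$ detects fibrations with fibrant codomain contains a step that fails. You assert that over an $\mathrm{L}_Z$-fibrant base, $\M$-fibrations coincide with $\mathrm{L}_Z$-fibrations; this is not true: if $X$ is $\M$-fibrant but not $Z$-local, then $X\to\ast$ is an $\M$-fibration with $\mathrm{L}_Z$-fibrant codomain and has the right lifting property against $J$, yet it is not a fibration of $\mathrm{L}_Z\M$. The standard coincidence statement requires \emph{both} domain and codomain to be $Z$-local fibrant. Correspondingly, the role you assign to $J_Z$ is misdirected: you only test $Y\to\ast$ against $J_Z$, but $Y$ is already assumed fibrant in $\mathrm{L}_Z\M$; what is needed is the lifting property of the map $p\colon X\to Y$ itself against $J_Z$. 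That lifting property says exactly that $\mathrm{Map}(\widetilde{B},X)\to \mathrm{Map}(A,X)\times_{\mathrm{Map}(A,Y)}\mathrm{Map}(\widetilde{B},Y)$ is a trivial Kan fibration; since $Y$ is $Z$-local fibrant, $\mathrm{Map}(\widetilde{B},Y)\to\mathrm{Map}(A,Y)$ is a trivial fibration, whence $\mathrm{Map}(\widetilde{B},X)\to\mathrm{Map}(A,X)$ is a weak equivalence and $X$ (which is $\M$-fibrant, being an $\M$-fibration over the $\M$-fibrant $Y$) is $Z$-local. Only at this point does the fibration criterion for maps between local fibrant objects apply, giving that $p$ is an $\mathrm{L}_Z$-fibration. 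The converse direction needs the members of $J_Z$ to be $Z$-local equivalences, hence the simplicial structure of $\mathrm{L}_Z\M$ and cofibrancy of the chosen finitely presentable models of the maps in $Z$, which you rightly flag as the remaining technical work. With the detection step repaired as above, your construction is essentially the cited one of Hovey.
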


\begin{proof}
  The proof of \cite[Proposition 4.2]{Hovey:stable-model}
  applies.
\end{proof}

In a weakly finitely generated model category $\M$, a fibrant
replacement functor \[ \fib\colon \M \rightarrow \M, \] 
which commutes with filtered colimits, can be constructed 
by attaching cells from a
set of acyclic cofibrations
$J$ with finitely presentable domains and codomains. 
It follows that the natural transformation $\Id_\M\rightarrow \fib$ is
an acyclic cofibration.

For applications in algebraic $K$-theory recall that,
given an object $B\in \M$ in a  category, an object 
{\em retractive over\/} $B$ is a pair
$(B\xrightarrow{s} D,D\xrightarrow{r}B)$ of morphisms in $\M$ such that $r\circ s=\id_B$.
Such a pair will often be abbreviated as ``$D$''.
With the obvious notion of morphism, these form a
category $\R(\M,B)$. A morphism $\phi\colon B\rightarrow C$ in $\M$
induces a functor
\[\phi_! \colon \R(\M,B) \rightarrow \R(\M,C), \quad D  \mapsto  D\cup_B C \]
having the functor
\[ \phi^! \colon \R(\M,C) \rightarrow \R(\M,B), \quad E \mapsto E\times_C B \]
as right adjoint, provided pushouts and pullbacks exist.

\begin{proposition}\label{prop:retractive-model}
  Let $\M$ be a weakly finitely generated 
  model category, and $B$ an object of $\M$. 
  The category $\mathbf{R}(\M,B)$ of objects retractive over $B$ is a
  weakly finitely generated 
  model category in a natural way. For
  every $\phi\colon B\rightarrow C$, the pair $(\phi_!,\phi^!)$ is Quillen.
  If $\M$ is simplicial, 
  then so is $\mathbf{R}(\M,B)$. If $\M$ is a monoidal
  model category under the cartesian product, then
  $\R(\M,B)$ is a $\R(\M,\ast)$-model category. 
\end{proposition}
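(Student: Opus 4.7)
The plan is to view $\R(\M,B)$ as the double slice $B/\M/B$: first form the overcategory $\M/B$, then the undercategory $B/(\M/B)$ at the object $\id_B$. Both slicing operations preserve cofibrantly generated model structures, with weak equivalences, cofibrations, and fibrations detected by the forgetful functor down to $\M$. So I would define a map in $\R(\M,B)$ to be a weak equivalence (respectively fibration, cofibration) precisely when its image in $\M$ is one, and check the model category axioms by reducing to $\M$.

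For cofibrant generation, start with the sets $I$, $J$ of $\M$. In $\M/B$, a set of generating (acyclic) cofibrations is parametrized by pairs $(i\colon A\to A', g\colon A'\to B)$ with $i\in I$ (respectively $J$); by local smallness this is a set. The forgetful functor $V\colon \M/B\to\M$ preserves and reflects filtered colimits, and the representable $\M/B((A,g'),-)$ is a pullback in $\Set$ of $\M(A,-)$ against the singleton $\{g'\}\hookrightarrow \M(A,B)$; since finite limits commute with filtered colimits in $\Set$, finite presentability of $A$ in $\M$ implies finite presentability of $(A,g')$ in $\M/B$. Passing to $\R(\M,B) = B/(\M/B)$, the forgetful functor $U'\colon \R(\M,B)\to\M/B$ has a left adjoint $L$ sending $(A,g)$ to $(B\sqcup A,\, B\hookrightarrow B\sqcup A,\, [\id_B,g])$. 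A set of generating (acyclic) cofibrations of $\R(\M,B)$ is obtained by applying $L$ to the chosen generators of $\M/B$, and the same pullback-of-representables argument shows that $L$ preserves finite presentability.

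The Quillen adjunction $(\phi_!,\phi^!)$ is immediate: $\phi_!(D) = D\cup_B C$ is a pushout in $\M$ under $B$, so it preserves (acyclic) cofibrations since these are detected in $\M$ and are stable under pushout; dually, $\phi^!(E) = E\times_C B$ preserves (acyclic) fibrations. When $\M$ is simplicial, the tensor in $\R(\M,B)$ is constructed as the pushout $B\cup_{B\otimes K}(D\otimes K)$ equipped with its induced section and retraction, the cotensor is the analogous pullback, and Quillen's axiom SM7 for $\R(\M,B)$ reduces directly to SM7 in $\M$. For the monoidal statement, if $\M$ is cartesian, then $\R(\M,\ast)$ is the category of pointed objects of $\M$ with its smash-product symmetric monoidal model structure, and the corresponding smash-type construction $K\wedge_B D$ (obtained by collapsing the copies of $\ast\times D$ and $K\times B$ inside $K\times D$ over $B$) equips $\R(\M,B)$ with an $\R(\M,\ast)$-module model structure; associativity, unit, and the pushout-product axiom are formal consequences of the cartesian monoidal structure on $\M$.

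The main obstacle is the careful bookkeeping in the double slice: one must simultaneously confirm that the iterated generating sets remain sets, that $V$ and $L$ both preserve finite presentability, and that the section-retraction data is tracked throughout the pushout and pullback constructions in the simplicial and monoidal parts. Once this is set up, every remaining assertion follows from formal properties of Quillen adjunctions and of (symmetric monoidal) simplicial model categories.
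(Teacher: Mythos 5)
Your construction follows the same route as the paper's: the model structure on $\R(\M,B)$ has weak equivalences, cofibrations and fibrations created in $\M$, the generating cofibrations are exactly the maps $B\sqcup(si\hookrightarrow ti)$ indexed by the attaching maps $ti\to B$ (these coincide with the generators the paper records), the Quillen pair $(\phi_!,\phi^!)$ is formal, and the $\R(\M,\ast)$-module structure comes from the same cartesian smash-type pairing; the paper simply delegates these verifications to Schwede and Hovey, whereas you re-derive them, and your finite-presentability bookkeeping for the transported domains and codomains is fine.

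There is, however, one step that does not go through as written: the second condition of Definition~\ref{defn:weakly-fin-gen}. In a weakly finitely generated category the set $J$ is \emph{not} a set of generating acyclic cofibrations; it is only required to detect fibrations with fibrant codomain. So the transported pairs with $i\in J$ do not ``generate'' the acyclic cofibrations of $\M/B$ or of $\R(\M,B)$, and what actually has to be checked is that the transported set still detects fibrations with fibrant codomain in $\R(\M,B)$. This is not a purely formal transport: an object $E$ of $\R(\M,B)$ is fibrant exactly when its retraction $E\to B$ is a fibration in $\M$, and its underlying object need not be fibrant in $\M$ when $B$ is not; on the other hand, the right lifting property in $\R(\M,B)$ against the transported $J$-maps unwinds (by the same adjunction argument you use for $I$) to the right lifting property of the underlying map against $J$ in $\M$, which for a non-fibrant codomain does not by itself imply being a fibration --- this is precisely the failure that the ``weakly'' in ``weakly finitely generated'' accommodates, e.g.\ in $\Aff^1$-local model structures. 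So you should either supply an argument for this detection property in the retractive category (this is the point the paper addresses by deducing it from Schwede's Lemma~1.3.4) or at least flag it; as it stands, your proposal establishes cofibrant generation together with condition (1), but not condition (2), which is the heart of the claim that $\R(\M,B)$ is weakly finitely generated. The remaining assertions (SM7 and the pushout-product axiom for the pairing) are waved through, but at a level of detail comparable to the paper, which cites Hovey for the Quillen bifunctor property.
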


\begin{proof}
  The statement regarding the (simplicial) model structure
  is \cite[Prop.~1.2.2]{schwede.spectra}, which implies
  the Quillen pair property. The statement
  regarding the generators can be deduced 
  from \cite[Lemma 1.3.4]{schwede.spectra}. For later reference,
  if $I=\{s_i\hookrightarrow t_i\}_{i\in I}$ is the set of generating cofibrations
  in $\M$, then 
  \[ \{B\coprod (s_i \hookrightarrow t_i)\}_{i\in I, \psi\in \Hom_{\M}(t_i,B)} \]
  is the set of generating cofibration in $\R(\M,B)$, where
  the maps $\psi\colon t_i\rightarrow B$ define the required retractions.
  Note that $\phi_!$ preserves this set of generating cofibrations.
  The final statement follows from \cite[Prop.~4.2.9]{Hovey:book}
  and the standard pairing
  \begin{align*}
    \R(\M,B)\times \R(\M,C) & \rightarrow \R(\M,B\times C), & 
    (D,E)
     \mapsto  D\times E\cup_{(B\times E\cup_{B\times C}
      D\times C)} B\times C,
  \end{align*}
  of retractive objects, which is a Quillen bifunctor.
\end{proof}

\begin{defn}\label{def:b-spectrum}
  Let $\M$ be a symmetric monoidal 
  model category, let $\N$ be an $\M$-model category,
  and let $B$ be an object of $\M$. 
  A $B$-{\em spectrum\/} $\E$ in $\N$  
  consists of a
  sequence $(\E_0,\E_1,\dotsc)$ of objects in $\N$
  together with a sequence of structure maps
  \[ \sigma_n^{\E}\colon \Sigma_B\E_n:=\E_n\smash B \rightarrow \E_{n+1}.\]
  The category of
  $B$-spectra in $\N$ is denoted $\Sp_B(\N)$. 
  Set $\Sp(\N):=\Sp_{S^1}(\N)$, where $S^1\in \sSet_\bullet =\M$ 
  is the category
  of pointed simplicial sets.
\end{defn}

\begin{proposition}\label{prop:stabilize-model-structure}
  Let $\M$ be a symmetric monoidal $\sSet_\bullet$-model category, 
  let $\N$ be an $\M$-model category,
  and let $B$ be a finitely presentable and
  cofibrant object of $\M$.
  Suppose that
  tensoring with a finite simplicial set $L$ preserves
  finitely presentable objects in $\N$. 
  If $\N$ is weakly finitely generated,
  then $\Sp_B(\N)$ is a weakly finitely generated model category
  such that $\Sigma_B^\infty \colon \N \rightarrow \Sp_B(\N)$ is 
  a left Quillen functor.
\end{proposition}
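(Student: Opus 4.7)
The plan is to carry out Hovey's two-step construction in the weakly finitely generated setting: first produce the projective (levelwise) model structure on $\Sp_B(\N)$, then left Bousfield localize to obtain the stable one, verifying the requirements of Definition~\ref{defn:weakly-fin-gen} at each stage.

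First, I would transport the generating (acyclic) cofibrations $I$ and $J$ of $\N$ along the left adjoints $F_n\colon \N \to \Sp_B(\N)$ to evaluation at level $n$, forming $I_{\mathrm{proj}} = \bigcup_n F_n I$ and $J_{\mathrm{proj}} = \bigcup_n F_n J$. Kan's lifting criterion then produces the projective model structure; the required acyclicity of $F_n(J)$ uses that $-\smash B$ is left Quillen, which holds since $B$ is cofibrant and $\N$ is $\M$-enriched. Since filtered colimits in $\Sp_B(\N)$ are computed levelwise and $F_n$ is left adjoint to evaluation at level $n$, the object $F_n A$ is finitely presentable in $\Sp_B(\N)$ as soon as $A$ is finitely presentable in $\N$. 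Thus the projective structure is weakly finitely generated, automatically simplicial with the enrichment inherited levelwise from $\N$, and preserves finite presentability under tensoring with a finite simplicial set, all inherited levelwise.

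Next, I would form the left Bousfield localization at the set
\[ Z = \{\zeta_n^A \colon F_{n+1}(A \smash B) \to F_n(A)\}, \]
where $n \ge 0$, $A$ ranges over the domains and codomains of morphisms in $I$, and $\zeta_n^A$ is adjoint to the identity $A \smash B \to (F_nA)_{n+1}$. A direct check via the adjunctions identifies $Z$-local objects with levelwise fibrant $\Omega$-spectra, so this yields the expected stable structure. Granted that $A \smash B$ remains finitely presentable in $\N$ when $A$ is (a consequence of $B$'s finite presentability and cofibrancy in $\M$ combined with the enriched action on $\N$), the members of $Z$ have finitely presentable domains and codomains, so Proposition~\ref{prop:weakly-fin-gen-inherit} delivers that the localization is weakly finitely generated. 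Finally, $\Sigma_B^\infty = F_0$ is left adjoint to evaluation at level $0$ and sends generating (acyclic) cofibrations of $\N$ to members of $I_{\mathrm{proj}}$ (resp.\ $J_{\mathrm{proj}}$), which remain (acyclic) cofibrations after localization, so $\Sigma_B^\infty$ is left Quillen for the stable structure.

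The main obstacle is the existence of the Bousfield localization at $Z$, which Proposition~\ref{prop:weakly-fin-gen-inherit} takes as a hypothesis. Within the weakly finitely generated framework, the cleanest route is to build an explicit spectrification fibrant replacement: iteratively attach cells from $J_{\mathrm{proj}}$ together with horns on the $\zeta_n^A$ and pass to a filtered colimit, mimicking the construction of $\fib$ on page 4 of the excerpt. Finite presentability throughout is precisely what ensures this functor commutes with filtered colimits, keeping the localized structure weakly finitely generated. A secondary technical point is the enriched argument that $A\smash B$ is finitely presentable in $\N$ alluded to above, which rests on $B$ being finitely presentable and cofibrant in $\M$ and should be spelled out for the intended applications to $\Aff^1$-homotopy theory.
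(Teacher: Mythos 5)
Your proposal follows essentially the same route as the paper: the paper invokes Hovey's Theorem 4.12, notes that the levelwise structure is weakly finitely generated with generating sets $\Fr I$ and $\Fr J$, and then applies Proposition~\ref{prop:weakly-fin-gen-inherit} to the Bousfield localization at exactly your set $\{\Fr_{n+1}(C\smash B)\to \Fr_n C\}$ with $C$ a domain or codomain in $I$. Your additional remarks (existence of the localization via an explicit spectrification-type fibrant replacement, and finite presentability of $C\smash B$) are precisely the points the paper delegates to Hovey's proof, so the argument is correct and matches the intended one.
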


\begin{proof}
  The proof given for \cite[Theorem 4.12]{Hovey:stable-model} applies.
  For later reference, the required sets are listed explicitly.
  The levelwise model structure on $\Sp_B(\N)$
  is weakly finitely generated
  with the sets 
  \[ \Fr I := \{\Fr_n i\}_{n\geq 0,i\in I} \quad \mathrm{and} \quad
  \Fr J :=\{\Fr_n j\}_{n\geq 0,j\in J},\]
  where $\Fr_n$ is the left adjoint
  of the evaluation 
  functor sending $\E$ to $\E_n$, and
  $I$ and $J$ are sets of maps in $\N$ satisfying
  Definition~\ref{defn:weakly-fin-gen}.
  The statement follows from Proposition~\ref{prop:weakly-fin-gen-inherit}
  because the $B$-stable model structure is a left Bousfield
  localization of the levelwise model structure with respect
  to the set
  \[ \{ \Fr_{n+1} (C\smash B) \to \Fr_n C \}_{n\in \mathbb{N},C \mathrm{\ domain\ or\ codomain\ in\ }I} \] 
  of morphisms with finitely presentable domains and codomains.
\end{proof}

As soon as $B$ is a suspension (for example, $S^1$ itself),
the model structure from Proposition~\ref{prop:stabilize-model-structure}
on $B$-spectra is stable in the sense of \cite[Definition 7.1.1]{Hovey:book}.
In particular, the weak equivalences of $B$-spectra then
satisfy Waldhausen's extension axiom \cite[p.327]{Waldhausen}.
However, since $\Sp_B(\M)$ usually does not inherit any monoidality
properties from $\M$, one has to use Jeff Smith's symmetric $B$-spectra
instead. Consider \cite[Theorem 8.11, Corollary 10.4]{Hovey:stable-model} for the following.

\begin{proposition}\label{prop:symm-spectra}
  Let $\M$ be a symmetric monoidal $\sSet_\bullet$-model category,
  let $\N$ be an $\M$-model category,
  and let $B$ be a finitely presentable and
  cofibrant object of $\M$.
  Suppose that
  tensoring with a finite simplicial set $L$ preserves
  finitely presentable objects. 
  If $\N$ is weakly finitely generated,
  then $\SymSp_B(\N)$ is a weakly finitely generated 
  $\SymSp_B(\M)$-model category
  such that $\Sigma_B^\infty \colon \N \rightarrow \SymSp_B(\N)$ is 
  a left Quillen functor. If additionally the 
  cyclic permutation on $B\smash B\smash B$ is homotopic
  to the identity, the model categories
  $\SymSp_B(\N)$ and  $\Sp_B(\N)$ 
  are Quillen equivalent.
\end{proposition}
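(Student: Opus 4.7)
The plan is to follow the template laid out for the non-symmetric case in Proposition~\ref{prop:stabilize-model-structure}, importing Hovey's constructions for symmetric spectra and checking at each step that the finiteness conditions are preserved. The main idea is that all of Hovey's generating sets are built from the adjoints to the evaluation functors $\mathrm{Ev}_n \colon \SymSp_B(\N)\to \N$, so finite presentability survives, and the Quillen equivalence to $\Sp_B(\N)$ is essentially the content of Hovey's Corollary 10.4.

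First I would set up the \emph{levelwise} projective model structure on $\SymSp_B(\N)$ using the sets $\Fr I:=\{\Fr_n i\}_{n\geq 0,i\in I}$ and $\Fr J:=\{\Fr_n j\}_{n\geq 0,j\in J}$, where now $\Fr_n \colon \N \to \SymSp_B(\N)$ is the left adjoint to $\mathrm{Ev}_n$ and so inserts a free symmetric-group action. At level $m\geq n$, $\Fr_n C$ is a finite coproduct of copies of $C\smash B^{\smash(m-n)}$ indexed by $\Sigma_m/\Sigma_{m-n}$; the assumption that smashing with a finite simplicial set preserves finitely presentable objects, applied to $B$ which is finitely presentable, guarantees that each $\Fr_n C$ is finitely presentable in $\SymSp_B(\N)$ when $C$ is. Weak finite generation of the levelwise structure then follows exactly as in the non-symmetric case.

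Next I would obtain the stable model structure as a left Bousfield localization of the levelwise one at the set
\[ \{\Fr_{n+1}(C\smash B)\to \Fr_n C\}_{n\in\NN,\, C\text{ domain or codomain in }I}, \]
whose domains and codomains are finitely presentable by the previous paragraph. Proposition~\ref{prop:weakly-fin-gen-inherit} then gives that $\SymSp_B(\N)$ is weakly finitely generated, and the identification $\Sigma_B^\infty=\Fr_0$ (of $\N$-objects viewed as symmetric $B$-spectra concentrated in degree~$0$ with trivial $\Sigma_\bullet$-action) shows at once that $\Sigma_B^\infty$ sends the generators of $\N$ to generating (acyclic) cofibrations, hence is left Quillen. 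The $\SymSp_B(\M)$-module structure on $\SymSp_B(\N)$ is defined levelwise using the coend formula from \cite[Section~7]{Hovey:stable-model}; the pushout-product axiom reduces, via the free-forgetful adjunction to levels, to the corresponding axiom in $\N$ over $\M$, which holds by hypothesis.

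The last and genuinely non-formal step is the Quillen equivalence $\SymSp_B(\N)\simeq\Sp_B(\N)$ under the assumption that the cyclic permutation of $B\smash B\smash B$ is homotopic to the identity. Here I would invoke Hovey's Corollary~10.4 directly: its proof uses only the existence of the stable model structures constructed above, together with the cyclic-permutation hypothesis to show that the forgetful functor $U\colon\SymSp_B(\N)\to \Sp_B(\N)$ detects stable equivalences and that its left adjoint is fully faithful at the level of homotopy categories. The main obstacle I anticipate is verifying that Hovey's arguments, which are phrased for cellular/combinatorial model categories, go through in the weakly finitely generated setting; this should reduce to checking that the monoidal fibrant replacement needed in Hovey's proof can be built as a filtered colimit of small cell attachments using $J$, which is precisely what weak finite generation provides. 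Everything else (the monoidal structure on $\SymSp_B(\M)$, the shuffle permutation arguments, the comparison of derived smash products) is formal once the model structures and their generators are in place.
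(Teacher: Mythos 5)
Your proposal follows essentially the same route as the paper's proof, which simply observes that the arguments for \cite[Theorem 8.11, Corollary 10.4]{Hovey:stable-model} apply, records the same generating sets $\Fr^\sym I$, $\Fr^\sym J$ for the levelwise structure, and obtains weak finite generation of the stable structure via Proposition~\ref{prop:weakly-fin-gen-inherit} applied to the localization at the maps $\Fr_{n+1}^\sym(C\smash B)\to \Fr_n^\sym C$, whose (co)domains are finitely presentable. Your additional remarks on $\Sigma_B^\infty=\Fr_0^\sym$, the levelwise description of $\Fr_n^\sym$, and the cyclic-permutation hypothesis entering only through Hovey's Corollary~10.4 are consistent with what the paper leaves implicit.
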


\begin{proof}
  The proof given for 
  \cite[Theorem 8.11, Corollary 10.4]{Hovey:stable-model} applies.
  For later reference, the required sets are listed explicitly.
  The levelwise model structure on the category $\SymSp_B(\M)$
  is weakly finitely generated
  with the sets 
  \[ \Fr^\sym I := \{\Fr_n^\sym i\}_{n\geq 0,i\in I} \quad \mathrm{and} \quad
  \Fr^\sym J :=\{\Fr_n^\sym j\}_{n\geq 0,j\in J},\]
  where $\Fr_n^\sym$ is the left adjoint
  of the evaluation 
  functor sending $\E$ to $\E_n$, and
  $I$ and $J$ are sets of maps in $\M$ satisfying
  Definition~\ref{defn:weakly-fin-gen}.
  The statement follows from Proposition~\ref{prop:weakly-fin-gen-inherit}
  because the $B$-stable model structure is a left Bousfield
  localization of the levelwise model structure with respect
  to the set
  \[ \{ \Fr_{n+1}^\sym (C\smash B) \to \Fr_n^\sym C \}_{n\in \mathbb{N},C \mathrm{\ domain\ or\ codomain\ in\ }I} \] 
  of morphisms with finitely presentable domains and codomains.
\end{proof}

\begin{lemma}\label{lem:stably-contr}
  Let $\M$ be a symmetric monoidal $\sSet_\bullet$-model category, 
  let $\N$ be an $\M$-model category,
  and let $B$ be a finitely presentable and
  cofibrant object of $\M$.
  Suppose that
  tensoring with a finite simplicial set $L$ preserves
  finitely presentable objects in $\N$. 
  Suppose that $\N$ is weakly finitely generated, and 
  let $\Sp_B(\N)$ be the stable model category of $B$-spectra
  in $\N$. If $\E$ is a cofibrant finitely presentable $B$-spectrum
  in $\N$ which is stably contractible, then there exists a natural
  number $N$ such that $\E_n$ is contractible for every $n\geq N$.
\end{lemma}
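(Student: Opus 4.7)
The plan is to combine three ingredients: any finitely presentable cofibrant $B$-spectrum $\E$ has structure maps $\sigma_n\colon\E_n\smash B\to\E_{n+1}$ that are isomorphisms for all $n$ above a threshold $N_0$, and has finitely presentable levels; the stable fibrant replacement $\hat\E$ can be computed levelwise as a filtered colimit of derived loops; stable contractibility combined with finite presentability of $\E_m$ then forces $\E_{m+k}$ to be contractible for some $k$, after which contractibility propagates upward by smashing with~$B$.

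First I would pin down $N_0$. Since $\E$ is cofibrant and finitely presentable in $\Sp_B(\N)$, it is a retract of a finite cell complex built from the generators $\Fr_n(si\hookrightarrow ti)$ listed in Proposition~\ref{prop:stabilize-model-structure}. For each such cell, the structure map at levels $k\geq n$ is the associativity isomorphism $(ti\smash B^{\smash(k-n)})\smash B\cong ti\smash B^{\smash(k-n+1)}$, and this property is preserved by finite pushouts and retracts. Letting $N_0$ be the maximum of the indices $n$ appearing in the cell decomposition yields isomorphisms $\E_{m+k}\cong\E_m\smash B^{\smash k}$ for $m\geq N_0$ and $k\geq 0$. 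The same cell decomposition exhibits each $\E_m$ as a finite iterated pushout of copies of $ti\smash B^{\smash(m-n)}$, which is finitely presentable in $\N$ because $B$ is finitely presentable and cofibrant in $\M$; hence each $\E_m$ is finitely presentable in $\N$.

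Next, the weakly finitely generated structure on $\N$ supplies a fibrant replacement $(-)^{\mathrm{fib}}$ commuting with filtered colimits, and $\Omega_B=\inthom(B,-)$ both preserves fibrancy (since $B$ is cofibrant) and commutes with filtered colimits (since $B$ is finitely presentable). The standard spectrification then produces a stably fibrant replacement with levels
\[
\hat\E_m\;=\;\colim_k\,\Omega_B^k(\E_{m+k})^{\mathrm{fib}},
\]
so stable contractibility of $\E$ translates to $\hat\E_m\simeq\ast$ for every $m$. Fixing $m\geq N_0$, finite presentability of $\E_m$ together with the $\Sigma_B\dashv\Omega_B$ adjunction gives
\[
[\E_m,\hat\E_m]_{\mathrm{Ho}(\N)}\;=\;\colim_k\,[\E_m\smash B^{\smash k},(\E_m\smash B^{\smash k})^{\mathrm{fib}}]_{\mathrm{Ho}(\N)}.
\]
The canonical map $\E_m\to\hat\E_m$ is represented at stage $k$ by the fibrant replacement $\E_{m+k}\to(\E_{m+k})^{\mathrm{fib}}$ (under the identification $\E_m\smash B^{\smash k}\cong\E_{m+k}$), and $\hat\E_m\simeq\ast$ forces this class to vanish at some finite stage $k$. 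Being both null-homotopic and a weak equivalence, the map $\E_{m+k}\to(\E_{m+k})^{\mathrm{fib}}$ can only exist if $\E_{m+k}\simeq\ast$.

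Finally, contractibility propagates: each $\E_l$ is cofibrant in $\N$ as a level of a cofibrant spectrum, and smashing a cofibrant weakly contractible object with the cofibrant object $B$ yields another weakly contractible object; so $\E_l\simeq\ast$ implies $\E_{l+1}=\E_l\smash B\simeq\ast$. Setting $N=m+k$ completes the proof. The main delicate point is the validity of the filtered colimit formula for $\hat\E_m$ together with the finite presentability of individual levels, both of which rest on combining the weakly finitely generated structure of $\N$ with the finite presentability and cofibrancy of~$B$.
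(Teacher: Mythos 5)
Your argument is correct and follows essentially the same route as the paper's proof: reduce to the range where the structure maps are isomorphisms, describe the stably fibrant replacement levelwise as a filtered colimit of loops on level fibrant replacements, use finite presentability to detect the null-homotopy of the canonical map at a finite stage, observe that a null-homotopic weak equivalence forces contractibility, and propagate upward. The only point to make explicit is that commuting $[\E_m,-]_{\mathrm{Ho}(\N)}$ with the filtered colimit requires not just $\E_m$ but also a cylinder $\E_m\smash \Delta^1_+$ to be finitely presentable, which is exactly where the hypothesis that tensoring with a finite simplicial set preserves finitely presentable objects enters (the paper records this as the finite presentability of $\E_0\otimes\Delta^1$).
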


\begin{proof}
  Let $\E$ be finitely presentable and cofibrant. Then $\E_n$ 
  is cofibrant and finitely presentable for every $n$. Moreover,
  there exists
  a natural number $M$ such that the structure maps $\sigma_m$
  are isomorphisms for every $m\geq M$. Since the canonical map
  \[ \Fr_M(\E_M)\rightarrow \E \]
  from the shifted suspension spectrum of $\E_M$ to $\E$ is a stable
  equivalence, one may work with $\Fr_M(\E_M)$ directly. Moreover, 
  one may choose $M=0$. Thus $\E_0$ is a cofibrant finitely presentable
  object with the property that 
  $\colim_n \Omega^n \fib(\Sigma^n \E_0)$ is contractible. Here
  $\fib\colon \N\rightarrow \N$ is a fibrant replacement functor.
  Equivalently, the class of the canonical map $\E_0
  \rightarrow \Omega^n\fib(\Sigma^n \E_0)$ becomes the class of the
  constant map in the colimit
  \[ [\E_0,\fib(\E_0)] \rightarrow {[\E_0,\Omega \fib(\Sigma_B \E_0)]}\rightarrow
    \dotsm \rightarrow {[\E_0,\Omega^n\fib(\Sigma^n_B \E_0)]} \rightarrow\dotsm \]
  of sets of pointed homotopy classes of maps. As $\E_0$ and
  $\E_0\otimes \Delta^1$ are finitely presentable, there exists a
  natural number $N$ such that the homotopy class of the canonical
  map $\E_0\rightarrow \Omega^n \fib(\Sigma^n_B \E_0)$ coincides with
  the homotopy class of the constant map. By adjointness, the
  canonical map $\Sigma^n_B \E_0 \rightarrow \fib(\Sigma^n_B \E_0)$ is homotopic
  to the constant map for every $n\geq N$. 
  Thus $\Sigma^n_B \E_0$ is contractible for every $n\geq N$.
\end{proof}

Definition~\ref{defn:weakly-fin-gen} leads to the following
finiteness notions. More variations, such as being finitely
dominated, are possible.

\begin{defn}\label{defn:finiteness} Let $\M$
  be a weakly finitely generated pointed model category, and choose
  a set of generating cofibrations
  $I$ with finitely presentable domains and codomains. 
  Let $B\in \M$.
  \begin{enumerate}
    \item The object $B$ is {\em finite\/} if
      it is cofibrant and finitely presentable; the latter means that
      $\Hom_\M(B,-)$ commutes with filtered colimits.
    \item The object $B$ is {\em homotopy finite\/}
      if it is cofibrant and weakly equivalent to a
      finite object.
    \item The object $B$ is $I$-{\em finite\/} if
      the map $\ast \rightarrow B$ is obtained by attaching
      finitely many maps from $I$.
    \item The object $B$ is $I$-{\em homotopy finite\/}
      if it is cofibrant and weakly equivalent to an
      $I$-finite object.
  \end{enumerate}
  The resulting full subcategories are denoted
  $\M^\f,\M^\hf,\M^{\mathrm{ifin}}$, and $\M^{\mathrm{ihfin}}$, respectively. 
\end{defn}

The category $\M^\ifin$ is essentially small, and so is $\M^\f$, at least if
$\M$ is locally finitely presentable. This will usually not be the
case for $\M^\ihf$ and $\M^\hf$. This set-theoretical issue can be
resolved in several ways, but will be ignored in the present approach,
following \cite[Remark on page 379]{Waldhausen}. 
Homotopy finite objects in a weakly finitely generated pointed model
category $\M$ are compact in the homotopy category of $\M$, as the
proof of \cite[Theorem 7.4.3]{Hovey:book} shows. 
In the case where $\M$ is a symmetric monoidal model category,
another finiteness notion is quite natural and will be used
eventually.

\begin{defn}\label{def:dualizable}
  Let $(\M,\smash,\one)$ be a symmetric monoidal model category.
  A cofibrant object $B$ is {\em dualizable\/}
  if there exists a cofibrant object $C$ and morphisms
  $\phi\colon \one\rightarrow B\smash C$, $\psi\colon C\smash B\rightarrow \one$
  in the homotopy category of $\M$, such that the compositions
  \[ B\xrightarrow{\phi\smash B} B\smash C\smash B 
  \xrightarrow{B\smash \psi}B \quad \mathrm{and} \quad 
  C\xrightarrow{C\smash \phi} C\smash B\smash C 
  \xrightarrow{\psi\smash C}C \]
  are the respective identities. The full subcategory
  of cofibrant and dualizable objects is denoted $\M^\du$.
\end{defn}

The categories introduced in Definitions~\ref{defn:finiteness}
and~\ref{def:dualizable} are equipped with a subcategory of weak equivalences
by intersecting with $\wM$, and with a subcategory of
cofibrations by intersecting with $\cM$ in the cases of
$\M^\f$, $\M^\hf$, and $\M^\du$. In the cases of 
$\M^{\mathrm{ifin}}$ and $\M^{\mathrm{ihfin}}$
this may lead to trouble with the required existence of
cobase changes. The subcategory of cofibrations in $\M^{\mathrm{ifin}}$ 
consists of those maps obtained by attaching finitely many cells from $I$,
and in $\M^{\mathrm{ihfin}}$ it is simply maps obtained 
by attaching cells from $I$.

\begin{lemma}\label{lem:fin-ifin}
  Let $\M$
  be a weakly finitely generated model category, and choose
  a set of generating cofibrations
  $I$ with finitely presentable domains and codomains. 
  If $B\hookrightarrow E$ is a cofibration of finitely presentable 
  objects in $\M$, there exists a finite
  $I$-cofibration $B\hookrightarrow C$ in $\M$ such that $B\hookrightarrow E$ is a retract
  of $B\hookrightarrow C$.
\end{lemma}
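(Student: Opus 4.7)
The plan is a standard two-step application of the small object argument followed by finite presentability. First, apply the small object argument for the set $I$ of generating cofibrations to factor $B\hookrightarrow E$ as $B\xrightarrow{j}\tilde{C}\xrightarrow{p}E$, where $j$ is a relative $I$-cell complex and $p$ is $I$-injective. Since $I$ has finitely presentable domains and codomains, the construction terminates at $\omega$, and $p$ is a trivial fibration by the standard recognition theorem for cofibrantly generated model categories. Because $B\hookrightarrow E$ is a cofibration, lifting against $p$ produces a morphism $s\colon E\to\tilde{C}$ satisfying $p\circ s=\id_E$ and $s\circ (B\to E)=j$.

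Next, I would express $\tilde{C}$ as the filtered colimit $\colim_{F\in\Fac}F$ over the poset $\Fac$ of finite $I$-cell subcomplexes of $\tilde{C}$ containing $B$, partially ordered by inclusion. Finite presentability of $E$ then forces $s$ to factor as $s=\iota_F\circ s'$ for some $F\in\Fac$ and $s'\colon E\to F$, where $\iota_F\colon F\hookrightarrow\tilde{C}$ is the inclusion. Setting $p_F := p\circ\iota_F$ yields $p_F\circ s'=\id_E$ and $p_F\circ(B\to F)=(B\to E)$ automatically from the construction of $\tilde{C}$ as a relative $I$-cell complex under $B$.

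The only remaining point is the identity $s'\circ(B\to E)=(B\to F)$, which need not hold on the nose in $F$. However, both of these composites become $j$ after postcomposition with $\iota_F$, so finite presentability of $B$ together with filteredness of $\Fac$ produces a larger $C\in\Fac$ containing $F$ on which the two parallel composites $B\rightrightarrows C$ already coincide. Replacing $F$ by this $C$ and $s'$ by its image in $C$ exhibits $B\hookrightarrow E$ as a retract of the finite $I$-cofibration $B\hookrightarrow C$ in the arrow category, as required.

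The chief technical point, and the step where care is needed, is the description of $\tilde{C}$ as a filtered colimit of its finite $I$-cell subcomplexes under $B$: this requires tracking the attaching data produced at each stage of the small object argument and verifying that finite sub-collections, closed under those dependencies, assemble into genuine $I$-cell subcomplexes of $\tilde{C}$. This bookkeeping is routine for cofibrantly generated model categories, and once granted, the argument reduces to two clean applications of finite presentability, for $E$ and then for $B$.
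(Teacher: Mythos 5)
Your overall strategy---factor $B\hookrightarrow E$ by the small object argument, solve the lifting problem against the resulting $I$-injective map, and use finite presentability to pull the section down to something finite---is exactly the strategy of the paper's proof. The genuine issue is the step you defer as ``routine bookkeeping'': that $\tilde C$ is the filtered colimit of its finite $I$-cell subcomplexes under $B$. In a general weakly finitely generated model category cofibrations need not be monomorphisms, so a subcomplex is not determined by a set of cells; it is a set of cells together with \emph{chosen} factorizations of their attaching maps through the previously built part of the subcomplex. Given two finite subcomplexes containing the same cell with different chosen factorizations, producing a common upper bound requires equalizing those factorizations inside some finite subcomplex---an instance of the very statement being proved---so the poset you integrate over is not obviously filtered, and the claim is not mere bookkeeping. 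Note also that your final step, equalizing the two maps $B\rightrightarrows F$ inside a larger finite subcomplex, quotes precisely this filteredness, so the gap propagates into the part of your argument that was meant to repair the under-$B$ compatibility.

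The paper sidesteps this by never invoking the global subcomplex poset: it inducts down the tower $B=D_{-1}\hookrightarrow D_0\hookrightarrow\dotsm$ produced by the small object argument, using only colimits that are manifestly filtered, namely the sequential tower itself and, at each stage, the poset of \emph{finite subsets} of cells attached to the fixed object $D_n$, where all attaching data are fixed by the presentation and no choices arise. Finite presentability of $E$, and then of the domains of the maps in $I$, extracts stage by stage a finite subcomplex $C$ through which the lift factors. Your observation that the resulting section $E\to C$ need not be a map under $B$ on the nose, and should be corrected using finite presentability of $B$, is a good one---the paper leaves it implicit, and it is vacuous in the paper's applications, where $B=\ast$---but that correction should likewise be carried out inside the stage-wise filtered colimits rather than in the poset of finite subcomplexes. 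With your argument restructured along those lines it becomes a complete proof, essentially identical to the paper's.
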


\begin{proof}
  Factor $B\hookrightarrow E$ via the small
  object argument applied to $I$, to obtain a lifting problem
  \[\xymatrix{
    B \ar[r]^-i \ar[d] & D \ar[d]^-\sim\\
    E \ar[r]^-{\id} &  E}\]
  which can be solved. The object $D$ is a sequential colimit
  of a diagram 
  \[ B =D_{-1} \hookrightarrow D_0 \hookrightarrow \dotsm \hookrightarrow D_n\hookrightarrow D_{n+1}
  \hookrightarrow \dotsm \] such that $D_{n+1}$ is obtained by attaching
  $I$-cells to $D_n$ indexed by a specific subset of the disjoint union
  $\coprod_{i\in I} \Hom_\M(\mathrm{dom}(i),D_n)$ for every $n$. 
  Since $E$ is finitely presentable,
  a lift $E \rightarrow D$ factors via a morphism $E\rightarrow D_{n+1}$. This object
  is the filtered colimit of objects $D_{n,\alpha}$ which 
  are obtained by attaching finitely many cells to $D_n$. This
  colimit is indexed over certain finite subsets 
  $\alpha \subset \coprod_{i\in I} \Hom_\M(\mathrm{dom}(i),D_n)$. 
  Again since $E$ is finitely presentable, there exists a finite
  subset
  $\beta \subset \coprod_{i\in I} \Hom_\M(\mathrm{dom}(i),D_n)$
  and a factorization over $E\rightarrow D_{n,\beta}$.
  Since the domains of the morphisms in $I$ are finitely presentable,
  one may proceed in the same fashion for every domain in $\beta$ 
  inductively to obtain a factorization $E\rightarrow C$ where $C$ is 
  obtained by attaching finitely many $I$-cells.
\end{proof}

Already algebraic examples such as chain complexes $\mathbf{Ch}(R)$ over
a ring $R$  show that the classes of
finite and $I$-finite objects can be different,
also on the level of algebraic $K$-theory. Bounded chain complexes of
finitely generated projective $R$-modules
are the finite objects in the standard model category of all chain complexes
of $R$-modules given in \cite[Def.~2.3.3]{Hovey:book}, whereas the
$I$-finite objects are the bounded chain complexes of finitely generated
free $R$-modules. One may use
\cite[Corollary~II.26.3 and Theorem II.9.2.2]{weibel.kbook}
to conclude that $K_0(\mathbf{Ch}^\ifin(\mathbb{Z}[\sqrt{-5}]))\cong \mathbb{Z}$
and $K_0(\mathbf{Ch}^\f(\mathbb{Z}[\sqrt{-5}]))\cong \mathbb{Z}\oplus \mathbb{Z}/2\mathbb{Z}$.
 
\begin{proposition}\label{prop:finiteness}
  Let $\M$ be a weakly finitely generated model category, and
  choose a set $I$ of generating cofibrations
  with finitely presentable domains and codomains. 
  Then the following hold.
  \begin{enumerate}
    \item\label{item:1} With these choices,
      $\M^\f,\M^\hf,\M^{\mathrm{ifin}}$ and $\M^{\mathrm{ihfin}}$
      are categories with cofibrations and weak equivalences.
    \item\label{item:2} The horizontal functors in the commutative
      diagram
      \begin{equation}\label{eq:2}
	\xymatrix{
	  \M^{\mathrm{ifin}} \ar[r] \ar[d] & \M^{\mathrm{ihfin}}\ar[d]  \\
	  \M^\f \ar[r] & \M^{\hf}
	}
      \end{equation}
      of exact inclusion functors are $K$-theory
      equivalences. 
    \item\label{item:4} The path components of
      the homotopy fiber of the map $\AAA(\M^\mathrm{if} \hookrightarrow \M^\f)$
      are all contractible.
  \end{enumerate}
\end{proposition}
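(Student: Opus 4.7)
The plan is to verify (\ref{item:1}) directly from the Waldhausen axioms, to reduce (\ref{item:2}) to Waldhausen's approximation theorem, and to obtain (\ref{item:4}) from Waldhausen's cofinality theorem, with Lemma~\ref{lem:fin-ifin} supplying the essential technical input in the latter two parts.

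For (\ref{item:1}), the zero object $\ast$ lies in each of the four subcategories (it is cofibrant, finitely presentable, and obtained by attaching no $I$-cells), and cofibrancy of an object $B$ makes $\ast\hookrightarrow B$ a cofibration. The gluing lemma is inherited from $\M$ in each case because the weak equivalences are. The only point requiring serious verification is closure under cobase change along cofibrations. For $\M^\f$, the pushout preserves both cofibrancy (by the model structure) and finite presentability (as a finite colimit of finitely presentable objects). For $\M^\ifin$ and $\M^\ihfin$, the cofibrations are literally attachments of finitely many, respectively arbitrarily many, cells from $I$, and this class is manifestly closed under cobase change. For $\M^\hf$, I would replace a pushout diagram $C\leftarrow B\hookrightarrow D$ of homotopy finite objects by a weakly equivalent diagram $C_f\leftarrow B_f\hookrightarrow D_f$ of finite ones (using successive liftings against trivial fibrations from finite replacements) and then invoke the gluing lemma in $\M$ to conclude that the finite pushout $C_f\cup_{B_f}D_f$ is weakly equivalent to the original.

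For (\ref{item:2}), I would apply Waldhausen's approximation theorem to each horizontal inclusion. Condition (App1) (reflection of weak equivalences) is automatic. For (App2) applied to $\M^\f\hookrightarrow \M^\hf$, the strategy is: given a finite $A$ and a morphism $f\colon A\to B$ with $B$ homotopy finite, pick a weak equivalence $B_f\xrightarrow{\sim} B$ with $B_f$ finite, factor $f$ in $\M$ as a cofibration $A\hookrightarrow E$ followed by a trivial fibration $E\xrightarrow{\sim} B$, and then invoke Lemma~\ref{lem:fin-ifin} together with the finite presentability of $B_f$ to replace $E$ by a finite $A'$ for which $A\hookrightarrow A'$ is still a cofibration and $A'\to B$ is still a weak equivalence. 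The argument for $\M^\ifin\hookrightarrow \M^\ihfin$ is parallel, with $I$-finite replacing finite throughout and the small object argument applied to $I$ to keep the intermediate cofibration $I$-cellular.

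For (\ref{item:4}), apply Lemma~\ref{lem:fin-ifin} to the cofibration $\ast\hookrightarrow B$ for each finite $B$: the output is an $I$-finite object $C$ such that $\ast\hookrightarrow B$ is a retract of $\ast\hookrightarrow C$, so every object of $\M^\f$ is a retract (as a pointed object) of one in $\M^\ifin$. Hence the inclusion $\M^\ifin\hookrightarrow\M^\f$ is cofinal in Waldhausen's sense, and his cofinality theorem (see \cite[Section 1.5.5]{Waldhausen}) asserts that $\AAA(\M^\ifin)\to \AAA(\M^\f)$ induces an isomorphism on $\pi_n$ for $n\geq 1$ and an injection on $\pi_0$. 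Equivalently, the homotopy fiber has vanishing positive-degree homotopy groups, so each of its path components is contractible. The main obstacle is the verification of (App2) in part (\ref{item:2}): the relative cell complex produced by the small object argument is typically transfinite, so the intermediate object need not be finitely presentable, and one must exploit the finite presentability of the targets together with Lemma~\ref{lem:fin-ifin} to truncate to a finite subcomplex without destroying the weak equivalence.
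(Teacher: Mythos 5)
Your treatment of part~(\ref{item:2}) contains the genuine gap. You propose to verify Waldhausen's Approximation Theorem directly for $\M^\f\hookrightarrow\M^\hf$, but the verification of (App2) as you sketch it does not go through. First, your opening move ``pick a weak equivalence $B_f\xrightarrow{\sim}B$ with $B_f$ finite'' is not available in general: homotopy finiteness of $B$ only says $B$ is weakly equivalent to a finite object through a zig-zag, and since $B$ need not be fibrant one can in general only produce a weak equivalence from a finite object into a fibrant replacement $\fib(B)$, not into $B$ itself --- whereas (App2) demands a weak equivalence $A'\to B$ with target literally $B$, compatible with the given $f$. Second, the object $E$ in the factorization $A\hookrightarrow E\xrightarrow{\sim}B$ produced by the small object argument is a typically transfinite cell complex and hence not finitely presentable, so Lemma~\ref{lem:fin-ifin} does not apply to it; moreover that lemma only yields a retract of a finite $I$-cofibration, not a finite subobject of $E$ that still maps to $B$ by a weak equivalence. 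You name this truncation problem yourself as ``the main obstacle,'' but you do not resolve it, and it cannot be resolved while insisting the weak equivalence land in $B$. Third, \cite[Theorem 1.6.7]{Waldhausen} also presupposes a cylinder functor (and the cylinder axiom) on the source, which $\M^\f$ in a bare weakly finitely generated model category does not obviously have. The paper sidesteps all of this by invoking Sagave's special approximation theorem \cite[Theorem 2.8]{Sagave}, which is tailored to Waldhausen subcategories of model categories: it weakens (App2) by allowing the target $B$ to be replaced by a weakly equivalent object, here the fibrant replacement obtained by attaching $J$-cells with finitely presentable domains and codomains, through which maps from finitely presentable objects factor at a finite stage. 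That is the missing ingredient, and without it (or an equivalent substitute) your argument for part~(\ref{item:2}) fails.

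Two smaller points. In part~(\ref{item:1}) the case of $\M^\ihf$ is not ``manifest'': closure of the class of cofibrations under cobase change is not the issue; one must show the pushout object is again weakly equivalent to an $I$-finite object, which requires the same kind of replacement argument you sketch for $\M^\hf$ (the paper carries it out by using that $D'\to\fib(D')$ is a filtered colimit of finite $J$-attachments and that the finitely presentable objects $B$ and $C$ lift through a finite stage, so that the gluing lemma applies to a strictly commuting comparison of pushout diagrams); your one-line sketch for $\M^\hf$ needs this same care to obtain compatible weak equivalences on the nose rather than merely objectwise finite models. Part~(\ref{item:4}) is fine and agrees with the paper, which deduces the retract statement from Lemma~\ref{lem:fin-ifin} and then cites the cofinality theorem in the form \cite[Theorem 1.10.1]{thomason.festschrift}.
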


\begin{proof}
  Consider statement~\ref{item:1} first.
  The gluing lemma follows from the cube lemma for model categories
  \cite[Lemma 5.2.6]{Hovey:book}. It remains to check in each case that
  a cobase change in $\M$ along the
  cofibration in question does not lead outside of the 
  category in question. For $\M^\f$ this follows, since 
  a pushout of finitely presentable objects is again finitely
  presentable, and cofibrancy is preserved. 

  In the case of $\M^\hf$, let
  $C \leftarrow B \hookrightarrow D$ be a diagram in $\M^\hf$ such
  that $B\hookrightarrow D$ is a cofibration. Let $B\rightarrow \fib(B)$
  be a fibrant replacement obtained by attaching cells from a
  set $J$ with finitely presentable domains and codomains.
  Choose a weak equivalence $B^\prime \xrightarrow{\sim} \fib(B)$
  from a finite object. The gluing lemma then implies that one
  may choose $B$ to be finite. Similarly to the above, choose a
  finite object $D^\prime$ and weak equivalences 
  $D \xrightarrow{\sim} \fib(D^\prime) \xleftarrow{\sim} D^\prime$. Note that
  $D^\prime \xrightarrow{\sim} \fib(D^\prime)$ is the 
  filtered colimit of certain maps $D^\prime \xrightarrow{\sim} D^{\prime\prime}$
  which are obtained by attaching finitely many maps from $J$.
  Since $B$ is finitely presentable, $B\hookrightarrow D \xrightarrow{\sim} \fib(D^\prime)$
  lifts to such a $D^{\prime\prime}$, and analogously for $C$.
  By assumption on $J$ and the gluing lemma, statement~\ref{item:1} follows.

  Statement~\ref{item:1}
  follows for $\M^{\mathrm{ifin}}$ basically by definition. 
  The argument in the case of $\M^{\mathrm{ihfin}}$
  is similar to the argument in the case of $\M^\hf$, which
  concludes the proof of statement~\ref{item:1}.

  Diagram~(\ref{eq:2}) exists because the domains and codomains of
  the maps in $I$ are finitely presentable. The
  statements for $\AAA(\M^\f)  \rightarrow \AAA( \M^{\hf})$
  and for $\AAA(\M^{\ifin})  \rightarrow \AAA( \M^{\ihf})$ follow
  from \cite[Theorem 2.8]{Sagave}. For statement~\ref{item:4} observe that
  every object in $\M^\f$ is a retract of an object in $\M^{\mathrm{ifin}}$
  by Lemma~\ref{lem:fin-ifin}.
  One then concludes with \cite[Theorem 1.10.1]{thomason.festschrift}.
\end{proof}

\begin{proposition}\label{prop:dualizable}
  Let $\M$ be a symmetric monoidal stable model category.
  Then $\M^\du$ is a Waldhausen category in the natural
  way described above. 
\end{proposition}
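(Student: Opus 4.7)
The plan is to verify each of the Waldhausen axioms for $\M^\du$ with weak equivalences $\wM \cap \M^\du$ and cofibrations $\cM \cap \M^\du$, as announced before the statement. Most axioms are inherited routinely from $\M$; the sole non-routine check is closure of $\M^\du$ under pushouts along cofibrations, and this is exactly where the stability hypothesis enters.

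First, the zero object $\ast$ is cofibrant and dualizable: taking its dual to be $\ast$ itself with $\phi\colon \one \rightarrow \ast$ and $\psi\colon \ast \rightarrow \one$ the unique (zero) maps, both triangle compositions are necessarily $\id_\ast$. For any cofibrant $B \in \M^\du$, the map $\ast \rightarrow B$ is a cofibration in $\M$, hence in $\M^\du$. Two-out-of-three for weak equivalences and closure of cofibrations under composition are immediate from the analogous properties in $\M$, and all isomorphisms between dualizable objects are in both subcategories by construction.

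The heart of the argument is cobase change. Given a span $C \leftarrow B \hookrightarrow D$ in $\M^\du$ with $B \hookrightarrow D$ a cofibration, form the pushout $P = D \cup_B C$ in $\M$. Since cofibrations in $\M$ are stable under cobase change, the map $C \rightarrow P$ is a cofibration in $\M$, and in particular $P$ is cofibrant. Because $B \hookrightarrow D$ is a cofibration between cofibrant objects, the strict pushout represents the homotopy pushout, and stability of $\M$ converts this homotopy cocartesian square into a cofiber sequence
\[ B \rightarrow C \directsum D \rightarrow P \]
in the tensor triangulated category $\mathrm{Ho}(\M)$. In any symmetric monoidal triangulated category, the full subcategory of dualizable objects is thick, hence closed under finite direct sums and cofibers; this shows $P \in \M^\du$. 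This is the main obstacle, and it is precisely here that I use the assumption that $\M$ is stable.

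Finally, the gluing lemma for $\M^\du$ is immediate from the gluing lemma in $\M$: for a morphism of spans of cofibrations whose three components are weak equivalences, the induced map on pushouts is a weak equivalence in $\M$, and by the previous paragraph the source and target pushouts both lie in $\M^\du$. Combining these verifications proves that $(\M^\du, \ast, \wM \cap \M^\du, \cM \cap \M^\du)$ is a Waldhausen category.
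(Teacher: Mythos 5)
Your proof is correct and follows essentially the same route as the paper: the only non-routine point is closure of $\M^\du$ under pushouts along cofibrations, which both you and the paper settle by passing to the triangulated homotopy category (using that the strict pushout along a cofibration of cofibrant objects is a homotopy pushout) and invoking closure of dualizable objects under completion of distinguished triangles. Note, however, that this closure is not a formal property of \emph{any} symmetric monoidal triangulated category as you assert: closure under retracts and finite sums is elementary, but closure under cofibers requires the triangulation to be compatible with the smash product and is precisely \cite[Theorem 0.1]{May:traces} --- the reference the paper cites --- which applies here because the homotopy category of a symmetric monoidal stable model category carries such a compatible triangulation.
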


\begin{proof}
  It remains to prove that if $C\leftarrow B\hookrightarrow D$ is a diagram
  in $\M^\du$, then its pushout $C\cup_BD$ is again dualizable.
  This follows from \cite[Theorem 0.1]{May:traces}, for example.
\end{proof}

The next statement is quite useful, since it implies
that through the eyes of algebraic $K$-theory, restriction
to stable model categories is acceptable. This in turn
allows the full applicability of Waldhausen's theory, since
the weak equivalences then satisfy the extension axiom.
Its proof goes back
to \cite{waldhausen.chromatic} which led to \cite{roendigs.diplom}.

\begin{theorem}\label{thm:stabilization-A}
  Let $\M$ be a pointed simplicial model category such that
  tensoring with a finite simplicial set preserves finitely presentable objects.
  Suppose that $\M$ is weakly finitely generated, and 
  let $\Sp(\M)$ be the stable model category of $S^1$-spectra
  in $\M$. 
  Then the suspension spectrum functor
  induces a $K$-theory equivalence
  \[ \AAA(\M^\g) \rightarrow \AAA(\Sp^\g(\M)), \]
  where $\g\in \{\f,\hf\}$. If $\Sigma$ preserves cofibrations
  in $\M^{\ifin}$, then the same is true for $\g\in \{\ifin,\ihf\}$.
\end{theorem}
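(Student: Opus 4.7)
The plan is to identify $\AAA(\Sp^\g(\M))$ with the sequential homotopy colimit of copies of $\AAA(\M^\g)$ under the endomorphism $\AAA(\Sigma)$, and then to show $\AAA(\Sigma)$ is itself a self-equivalence via Waldhausen's additivity theorem; the inclusion at level $0$ realizes $\AAA(\Sigma^\infty)$, which will therefore be a $K$-theory equivalence.

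First I would invoke Lemma~\ref{lem:stably-contr} to see that every cofibrant finitely presentable $S^1$-spectrum $\E$ is stably equivalent to a shifted suspension spectrum $\Fr_N(\E_N)$ with $\E_N\in\M^\g$, so every object of $\Sp^\g(\M)$ is stably equivalent to $\Fr_n X$ for some $X\in\M^\g$. Combined with the natural stable equivalence $\Fr_{n+1}(\Sigma X)\to\Fr_n X$, this presents $\Sp^\g(\M)$ as the homotopy-coherent telescope
\[
\M^\g \xrightarrow{\Sigma}\M^\g \xrightarrow{\Sigma}\M^\g \xrightarrow{\Sigma}\cdots,
\]
glued via the functors $\Fr_n$. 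Applying $\AAA$, which commutes with filtered colimits of Waldhausen categories, produces an equivalence
\[
\colim_n \AAA(\M^\g) \xrightarrow{\sim} \AAA(\Sp^\g(\M)).
\]
Essential surjectivity is Lemma~\ref{lem:stably-contr}; the remaining fullness and faithfulness at the level of $\pi_*$ follow because any stable map between shifted suspension spectra $\Fr_n X \to \Fr_n Y$ desuspends to a map in $\M^\g$ after sufficiently many further suspensions, by finite presentability of $X$ together with the explicit stabilization described in Proposition~\ref{prop:stabilize-model-structure}.

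Next I would verify that $\AAA(\Sigma)$ is a self-equivalence. The simplicial structure furnishes a natural cofiber sequence of exact endofunctors $\mathrm{id}\hookrightarrow C \to \Sigma$ on $\M^\g$, where $C$ is the standard cone functor (whose values are all weakly contractible). Waldhausen's additivity theorem yields $\AAA(\mathrm{id}) + \AAA(\Sigma) \simeq \AAA(C)\simeq 0$, whence $\AAA(\Sigma)\simeq -\mathrm{id}$ is a homotopy equivalence. The assumption that tensoring with a finite simplicial set preserves finite presentability ensures that $C$ restricts to an endofunctor of $\M^\g$ for $\g\in\{\f,\hf\}$; the extra hypothesis that $\Sigma$ preserves cofibrations in $\M^\ifin$ handles the cases $\g\in\{\ifin,\ihf\}$. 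Combining this with the previous step, $\colim_n \AAA(\M^\g)\simeq \AAA(\M^\g)$ via the inclusion at level zero, which is precisely $\AAA(\Sigma^\infty)$.

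The main obstacle is making the telescope identification rigorous at the level of Waldhausen categories: $\Fr_n$ and $\Fr_{n+1}\circ\Sigma$ agree only up to a stable equivalence rather than an isomorphism, so a naive filtered colimit of Waldhausen categories is not quite the correct object. Following \cite{waldhausen.chromatic} and \cite{roendigs.diplom}, the remedy is to exhibit $\Sp^\g(\M)$ as an on-the-nose union of full Waldhausen subcategories $\mathbf{C}_n$ of spectra obtained by attaching cells only above level $n$, verify via a direct approximation argument that $\Fr_n\colon \M^\g\to\mathbf{C}_n$ is a $K$-theory equivalence (using that $\Fr_n$ reflects weak equivalences between such concentrated spectra), and identify the transition $\mathbf{C}_{n+1}\hookrightarrow\mathbf{C}_n$ on $K$-theory with $\AAA(\Sigma)$ via the stable equivalence $\Fr_{n+1}(\Sigma X)\xrightarrow{\sim}\Fr_n X$.
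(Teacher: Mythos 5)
Your overall strategy -- use additivity to see that $\AAA(\Sigma)$ is an equivalence, and identify $\AAA(\Sp^\g(\M))$ with a telescope of copies of $\AAA(\M^\g)$ along $\Sigma$ so that the level-zero inclusion realizes $\AAA(\Sigma^\infty)$ -- is exactly the paper's strategy (going back to Waldhausen). But the one step you yourself flag as the main obstacle is precisely where your proposed remedy breaks down. The subcategories $\mathbf{C}_n$ of spectra built from cells only in levels $\geq n$ are \emph{decreasing} in $n$, so they do not exhaust $\Sp^\g(\M)$ as a filtered union with the inclusions as transition maps (the union is just $\mathbf{C}_0$), and more seriously the claim that $\Fr_n\colon \M^\g\to\mathbf{C}_n$ is a $K$-theory equivalence provable ``by a direct approximation argument'' is not available: the second approximation property fails, because an object such as $\Fr_m Y\in\mathbf{C}_n$ with $m>n$ is not stably equivalent to $\Fr_n X$ for any $X\in\M^\g$ (that would require $Y$ to desuspend $m-n$ times; already for $\M=\sSet_\bullet$, $\Fr_5 S^0\in\mathbf{C}_0$ is a negative sphere and no suspension spectrum). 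Indeed $\mathbf{C}_n$ is equivalent, by shifting, to the whole category of finite cell spectra, so for $n=0$ your claimed equivalence $\Fr_0=\Sigma^\infty\colon\M^\g\to\mathbf{C}_0$ \emph{is} the theorem being proved; the remedy is circular. (A smaller point: under $\Fr_{n+1}(\Sigma X)\simeq\Fr_n X$ the inclusion $\mathbf{C}_{n+1}\hookrightarrow\mathbf{C}_n$ would correspond to the inverse of $\AAA(\Sigma)$, not to $\AAA(\Sigma)$ -- harmless once invertibility is known, but symptomatic of the directions not assembling into the colimit you want.)

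The approximation argument has to run in the opposite direction, from spectra to a strict colimit category, and this is how the paper does it. One forms the actual colimit $\colim_\Sigma\M^\g$ in the category of Waldhausen categories; the isomorphism $\colim_\Sigma\Sdot\M^\g\iso\Sdot\colim_\Sigma\M^\g$ together with additivity (your cone-sequence argument, which is fine, and which is where the hypotheses on tensoring with finite simplicial sets and on $\Sigma$ preserving $I$-cofibrations enter) shows that $\M^\g\to\colim_\Sigma\M^\g$ is a $K$-theory equivalence. Then one replaces $\Sp^\g(\M)$ by the exactly equivalent subcategory of \emph{strictly finite} spectra (structure maps eventually identities, possible since finite presentability forces the structure maps to be eventually isomorphisms -- note this eventual-isomorphism fact is what you need, not the conclusion of Lemma~\ref{lem:stably-contr}), and applies Waldhausen's Approximation Theorem to the functor $\Phi\colon\Sp^{\mathrm{sf}}(\M)\to\colim_\Sigma\M^\g$, $\E\mapsto[(\E_n,n)]$; here Lemma~\ref{lem:stably-contr} is used to see that $\Phi$ preserves (and detects) weak equivalences. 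Since $\Sigma^\infty$ followed by $\Phi$ is the canonical functor $\M^\g\to\colim_\Sigma\M^\g$, the two-out-of-three property for $K$-theory equivalences finishes the proof, and the $\hf$, $\ifin$, $\ihf$ cases follow as you indicate. So: right skeleton, but the technical heart of your write-up, as proposed, does not go through and needs to be replaced by the strictly-finite-spectra/colimit-category comparison.
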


\begin{proof}
  Consider first $\g=\f$.
  By the additivity
  theorem \cite[Theorem 1.4.2]{Waldhausen} the suspension functor
  $\Sigma=-\smash S^1$
  induces a $K$-theory equivalence. Consider the colimit
  of 
  \[ \M^\f \xrightarrow{\Sigma} \M^\f \xrightarrow{\Sigma} \dotsm \]
  in the category of Waldhausen categories. There is an isomorphism
  \[ \colim_\Sigma \Sdot \M^\f \iso \Sdot \colim_\Sigma \M^\f, \]
  which implies that the canonical functor $\M^\f \rightarrow \colim_\Sigma \M^\f$
  is a $K$-theory equivalence. A $S^1$-spectrum $\E$ is 
  {\em strictly finite\/} if there exists a natural number $N = N(\E)$
  such that $\E_N$ is finite and for every $n\geq N$ the structure map 
  $\sigma_n\colon \Sigma \E_n\rightarrow \E_{n+1}$ is the identity.
  Let $\Sp^{\mathrm{sf}}(\M)$ denote
  the full subcategory of strictly finite $S^1$-spectra which are
  also cofibrant. It is a Waldhausen category in a natural way, 
  and the inclusion
  $\Sp^{\mathrm{sf}}(\M)\hookrightarrow \Sp^\f(\M)$ is an exact equivalence. In particular,
  the inclusion is a $K$-theory equivalence.
  
  Let $\Phi\colon \Sp^{\mathrm{sf}}(\M)\rightarrow \colim_\Sigma\M^f$ denote the functor
  sending $\E$ to the equivalence class of $(\E_n,n)$, where $n\geq N(\E)$.
  This functor is well-defined, preserves cofibrations, and pushouts
  essentially by construction.
  Moreover, it preserves weak equivalences by Lemma~\ref{lem:stably-contr}.
  It is straightforward to verify that $\Phi$ satisfies the conditions
  of Waldhausen's Approximation Theorem \cite[Theorem 1.6.7]{Waldhausen}.
  Thus $\Phi$ is a $K$-theory equivalence. It remains to note that
  the suspension spectrum functor $\Sigma^\infty \colon \M^\f
  \rightarrow \Sp^\f(\M)$ factors as
  \[ \M^\f  \rightarrow   \Sp^{\mathrm{sf}}(\M) \hookrightarrow \Sp^\f(\M), \]
  which completes the proof for $\g=\f$. The case $\g=\hf$
  then follows from Proposition~\ref{prop:finiteness}.
  The extra assumption on $\Sigma$ implies that these arguments
  apply also to $\g\in \{\ifin,\ihf\}$.
\end{proof}

Theorem~\ref{thm:stabilization-A} provides many examples of
non-equivalent homotopy theories having the same $K$-theory.

\section{$\Aff^1$-homotopy theory}
\label{sec:aff1-homotopy-theory}

Motivic or $\Aff^1$-homotopy theory was 
introduced in \cite{mv}. Its stabilization is considered in 
\cite{jardine.mss}. For technical reasons, the unstable projective
version  (which is the basis of \cite{dro.mf}) is more convenient,
although the closed motivic model structure 
described in \cite[Appendix]{ppr.bgl} seems to be quite ideal for the
comparison with the Grothendieck ring of varieties. 

A {\em base scheme\/} is a Noetherian separated
scheme of finite Krull dimension.
A {\em motivic space\/}
over $S$ is a presheaf on the site $\Sm_S$
of smooth separated $S$-schemes with values in the category of
simplicial sets. Let $\M(S)$ denote the category of pointed
motivic spaces.

\begin{example}
\label{ex:representable}
Any scheme $X$ in $\Sm_S$ defines a discrete
representable motivic space over $S$
which is also denoted $X$, and a discrete
representable pointed motivic space $X_+$ over $S$. One has
$X_+(Y)=\Set_{\Sm_S}(Y,X)_+$, where $B_+$ denotes
the set $B$ with a disjoint basepoint. Any (pointed) simplicial
set $L$ defines a constant (pointed) motivic space which is also
denoted $L$.
\end{example}

Many model structures exist on $\M(S)$ having the
Morel-Voevodsky $\Aff^1$-homotopy category of $S$ as
its homotopy category. Waldhausen's setup of algebraic
$K$-theory requires specific choices. 
The following model structure
is well-suited for base change (see \cite[Example 3.1.22]{mv}).

\begin{defn}
\label{defn:model-structure}
{\em Cofibrations\/} in $\M(S)$ are generated by the set
\begin{equation}\label{eq:I-S}
  \bigl\lbrace \bigl(X \times (\partial \Delta^n \hookrightarrow 
      \Delta^n)\bigr)_+\bigr\rbrace_{X\,\in \,\Sm_S, \,n\geq 0}.
\end{equation}
Applying the small object argument to this set produces
a cofibrant replacement functor $\kappa\colon (-)^c\rightarrow \Id_{\M(S)}$.
A pointed motivic space $B$ is {\em fibrant\/} if 
\begin{itemize}
\item $B(X)$ is a fibrant simplicial set for all $X\in \Sm_S$,
\item the image of every Nisnevich elementary distinguished square
  \[\xymatrix{
   V  \ar[r] \ar[d]    & Y  \ar[d]  \\
   U  \ar[r]    & X  }\]
  in $\Sm_S$ under $B$ is a homotopy pullback square of simplicial sets,
\item $B(\emptyset)$ is contractible, and 
\item for every $X\in \Sm_S$, the map $B(X\times \Aff^1 \xrightarrow{\mathrm{pr}} X)$
    is a weak equivalence of simplicial sets.
\end{itemize}
A map $\phi\colon D\rightarrow B$ of pointed motivic spaces over $S$ is a {\em weak
equivalence\/} if, for every fibrant motivic space $C$, the induced map
\[\sSet_{\M(S)}(\phi^c,C)\colon \sSet_{\M(S)}(B^c,C)\rightarrow \sSet_{\M(S)}(D^c,C)\]
is a weak equivalence of simplicial sets.
A map of motivic spaces is a {\em fibration\/} if it has the right lifting
property with respect to all cofibrations which are also weak equivalences
(the {\em acyclic\/} cofibrations). 
\end{defn}

\begin{theorem}\label{thm:model-structure-spaces}
The classes from Definition~\ref{defn:model-structure}
define a symmetric monoidal
$\sSet_\bullet$-model structure on $\M(S)$ which is
weakly finitely generated. It is
Quillen equivalent to the Morel-Voevodsky model.
\end{theorem}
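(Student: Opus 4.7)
The plan is to obtain this model structure as a two-stage left Bousfield localization of the projective model structure on pointed simplicial presheaves on $\Sm_S$, and then to invoke Proposition~\ref{prop:weakly-fin-gen-inherit} at each stage to carry the weakly finitely generated property through. The comparison with the Morel--Voevodsky structure will then be given by the identity functor between two model structures on $\M(S)$ that share the same class of weak equivalences.

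First I would equip $\M(S) = \sPre(\Sm_S)_\bullet$ with the projective model structure, where weak equivalences and fibrations are detected sectionwise. The set~(\ref{eq:I-S}) is a set of generating cofibrations, and the corresponding horn inclusions $\{(X \times (\Lambda^n_k \hookrightarrow \Delta^n))_+\}$ provide generating acyclic cofibrations; all domains and codomains are representable, hence finitely presentable in the presheaf category, so this structure is weakly finitely generated. The smash product of pointed motivic spaces makes it symmetric monoidal, and the constant-presheaf functor makes it $\sSet_\bullet$-enriched; the pushout-product axiom reduces on the generators~(\ref{eq:I-S}) to the simplicial pushout-product axiom. Tensoring with a finite simplicial set preserves finitely presentable objects, so the hypotheses of Proposition~\ref{prop:weakly-fin-gen-inherit} are satisfied.

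Next I would localize in two stages. The first stage inverts the small set consisting of $\emptyset_+ \to \ast$ together with, for each Nisnevich elementary distinguished square, the canonical map $U_+ \cup_{V_+} Y_+ \to X_+$. The second stage inverts the set $\{(X \times \Aff^1)_+ \to X_+\}_{X \in \Sm_S}$ of $\Aff^1$-projections. At each stage the set of maps being inverted consists of maps between finitely presentable motivic spaces, so Proposition~\ref{prop:weakly-fin-gen-inherit} produces a weakly finitely generated model structure; at the end of the second stage the fibrant objects are precisely those described in Definition~\ref{defn:model-structure}. The symmetric monoidal $\sSet_\bullet$-structure transfers through each localization because the class of maps being inverted is stable under smashing with a representable: smashing an $\Aff^1$-projection for $Y$ with $X_+$ yields the $\Aff^1$-projection for $X \times Y$, and Nisnevich elementary distinguished squares pull back along $X \times (-) \to (-)$ to Nisnevich elementary distinguished squares.

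For the Quillen equivalence with the Morel--Voevodsky model of \cite{mv}, that structure arises as the analogous left Bousfield localization of the injective (sectionwise cofibration) model structure on $\M(S)$, and by construction both localizations have the same class of weak equivalences. The identity functor is left Quillen from the projective localization to the injective one, since every projective cofibration is a sectionwise cofibration, and hence the identity adjunction is a Quillen equivalence between two presentations of a common homotopy category. The hard step will be propagating the pushout-product axiom through the two Bousfield localizations: one must verify that smashing with a cofibrant motivic space preserves Nisnevich-local and $\Aff^1$-local equivalences, which by cell-induction on~(\ref{eq:I-S}) reduces to the stability statements indicated above.
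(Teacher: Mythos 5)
Your overall strategy --- equip $\M(S)$ with the projective structure, localize first at $\emptyset_+\to\ast$ and the elementary distinguished squares, then at the $\Aff^1$-projections, feed each stage into Proposition~\ref{prop:weakly-fin-gen-inherit}, and obtain monoidality from stability of the localizing set under smashing with representables --- is sound and is essentially the construction that the paper delegates to its reference \cite[Section 2.1]{dro.mf}. One small point: Proposition~\ref{prop:weakly-fin-gen-inherit} is conditional on the localization existing, so you should record why it does (the projective structure on pointed simplicial presheaves is left proper and cellular, respectively combinatorial, so the Hirschhorn or Smith existence theorem applies). Also note that your pushout $U_+\cup_{V_+}Y_+$ is not a projective-cofibrant construction; it is still a homotopy pushout because $V_+\to Y_+$ is a sectionwise cofibration, but that deserves a sentence.

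The genuine gap is in the final comparison. The Morel--Voevodsky model is not, by construction, a localization of the injective structure on $\M(S)$ at your set: it is defined on simplicial Nisnevich \emph{sheaves}, with the local (topology-theoretic, e.g.\ stalkwise on points of the Nisnevich site) weak equivalences, and only then localized at $\Aff^1$. So the identity functor does not connect the two categories --- you need the sheafification--inclusion Quillen equivalence between local presheaves and sheaves --- and, more importantly, the identification of the weak equivalences is not ``by construction'': you must know that a sectionwise fibrant presheaf taking elementary distinguished squares to homotopy pullbacks (with $B(\emptyset)$ contractible) is Nisnevich-locally fibrant, i.e.\ that the elementary squares generate the Nisnevich-local homotopy theory. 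This is the Morel--Voevodsky descent theorem for the Nisnevich topology, cf.\ \cite[Section 3.1]{mv}; without it, your square-localization could a priori invert strictly fewer maps than the Nisnevich-local structure, and the claim that the two presentations share their weak equivalences would be unjustified. Once that input (and its compatibility with the subsequent $\Aff^1$-localization) is invoked, the rest of your outline goes through.
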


\begin{proof}
See~\cite[Section 2.1]{dro.mf} for a proof. 
\end{proof}

\begin{remark}
\label{r:fibrations-fibrant}
The smash product of pointed motivic spaces is defined 
sectionwise.
The smash product of a weak equivalence with an
arbitrary pointed motivic space is a weak equivalence.
Since the domains and codomains of the generating cofibrations
are finitely presentable, a filtered colimit of weak equivalences
is again a weak equivalence. Moreover, a filtered colimit of
fibrant motivic spaces is again fibrant. 
\end{remark}

Proposition~\ref{prop:stabilize-model-structure}
applies to the model structure from Theorem~\ref{thm:model-structure-spaces}. 
The two relevant
examples are $S^1$-spectra $\Sp(S):=\Sp_{S^1}(\M(S))$
and $T$-spectra $\Sp_T(S):=\Sp_T(\M(S))$, as well 
as their symmetric analogues
$\SymSp(S)$ and $\SymSp_T(S)$.
Here $S^1=\Delta^1/\partial \Delta^1$
is the constant simplicial circle, and $T=S^1\smash S^{1,1,}$, 
where $S^{1,1}$ is the simplicial mapping cylinder of the unit
$S\hookrightarrow \mathbb{G}_m$ in the multiplicative group scheme over $S$. 

\begin{defn}\label{def:I-S}
  Let $S$ be a base scheme. Then $I_S$ denotes
  the set of generating cofibrations in $\M(S)$
  given in~(\ref{eq:I-S}),
  or (if no confusion can arise)
  the corresponding set of generating cofibrations
  in (symmetric) $B$-spectra over $S$ as introduced
  in the proof of Proposition~\ref{prop:stabilize-model-structure}
  and~\ref{prop:symm-spectra}, respectively.
\end{defn}

If $f\colon X\rightarrow Y$ is a morphism of base schemes, pullback along
$f$ defines a functor $\Sm_Y\rightarrow \Sm_X$. Precomposition with this
functor yields another functor, denoted $f_\ast \colon \M(X)\rightarrow \M(Y)$.
On objects 
\begin{equation}
\label{eq:f-lower-star}
(f_\ast B)(Z) = B(X\times_Y Z)
\end{equation}
for any $Z\in \Sm_Y$. 
Via left Kan extension, $f_\ast$ has a left adjoint
$f^\ast\colon \M(Y)\rightarrow \M(X)$
which is strict symmetric monoidal.
Since every motivic space is a colimit
of representable ones, $f^\ast$ is characterized by the formula
\begin{equation}
\label{eq:f-upper-star}
f^\ast(Z_+)=(X\times_Y Z)_+
\end{equation}
for every $Z\in \Sm_Y$. 

\begin{example}
\label{ex:internal-hom}
Base change describes the internal hom in $\M(X)$ as
\begin{equation*}
  \M(X)(C,B)(Z\xrightarrow{z} X) = \sSet_{\M(X)}(C,z_\ast z^\ast B).
\end{equation*}
Note that
if $f$ is smooth, the canonical natural transformation
\begin{equation}\label{eq:closed}
  f^\ast\M(Y)(C,B) \rightarrow  \M(X)(f^\ast C,f^\ast B) 
\end{equation}
is a natural isomorphism.
\end{example}

If $f\colon X\rightarrow Y$ is a smooth morphism of base schemes, composition
with $f$ defines a functor $\Sm_X \rightarrow \Sm_Y$. Precomposition with
this functor defines the functor $f^\star\colon \M(Y)\rightarrow \M(X)$, which then
has a left adjoint $f_\sharp \colon \M(X)\rightarrow \M(Y)$ 
by (enriched) Kan extension. Since every motivic space is a colimit
of representable ones, $f_\sharp$ is characterized by the formula
\begin{equation}
\label{eq:f-sharp}
f_\sharp(Z\xrightarrow{z} X)_+= (Z\xrightarrow{z} X\xrightarrow{f} Y)_+
\end{equation}
for every $Z\in \Sm_X$. If $Z\rightarrow Y$ is in $\Sm_Y$, the canonical 
$Y$-morphism $X\times_Y Z\rightarrow Z$ defines a map
$B(Z) \rightarrow f_\ast f^\star B (Z)$ which is natural in $Z$ and $B\in \M(Y)$,
and hence a natural transformation $\Id_{\M(Y)} \rightarrow f_\ast \circ f^\star$.

\begin{lemma}
\label{lem:smooth-base-change}
If $f\colon X\rightarrow Y$ is a smooth morphism of base schemes, the adjoint
\[ f^\ast \rightarrow f^\star \]
of the natural transformation $\Id_{\M(Y)} \rightarrow f_\ast \circ f^\star$
is a natural isomorphism.
\end{lemma}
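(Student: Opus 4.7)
The plan is to verify the natural transformation $f^\ast\to f^\star$ objectwise on representable pointed motivic spaces, using that both functors preserve colimits, and then to identify the claim with the universal property of the fiber product $X\times_Y Z$.

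First I would note that both $f^\ast$ and $f^\star$ are colimit-preserving. The functor $f^\ast$ is a left adjoint by construction. The functor $f^\star$, being precomposition with $\Sm_X\to \Sm_Y$, commutes with all limits and colimits, since these are computed pointwise in pointed simplicial sets (equivalently, $f^\star$ is itself a left adjoint, namely to the right Kan extension along $\Sm_X\to \Sm_Y$). Because every pointed motivic space $B\in \M(Y)$ is a colimit of representables of the form $Z_+$ with $Z\in \Sm_Y$, it suffices to check that $f^\ast Z_+\rightarrow f^\star Z_+$ is an isomorphism for every such $Z$.

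Next I would compute both sides explicitly. On the one hand, \eqref{eq:f-upper-star} gives $f^\ast(Z_+)=(X\times_Y Z)_+$ as a representable pointed motivic space on $\Sm_X$. On the other hand, for $W\xrightarrow{w} X$ in $\Sm_X$ one has
\[
 f^\star(Z_+)(W\xrightarrow{w} X) = Z_+(W\xrightarrow{f\circ w} Y) = \Hom_{\Sm_Y}(W,Z)_+ .
\]
By the universal property of the pullback $X\times_Y Z$, the assignment $g\mapsto (w,g)$ provides a natural bijection $\Hom_{\Sm_Y}(W,Z)\cong \Hom_{\Sm_X}(W, X\times_Y Z)$, hence a canonical isomorphism of pointed motivic spaces $(X\times_Y Z)_+\xrightarrow{\cong} f^\star(Z_+)$.

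Finally, I would check that the natural transformation obtained as the adjoint of $\Id_{\M(Y)}\rightarrow f_\ast\circ f^\star$ agrees with this canonical isomorphism on each $Z_+$. Unwinding adjointness, the adjoint of $\eta_{Z_+}\colon Z_+\rightarrow f_\ast f^\star Z_+$ under the adjunction $f^\ast \dashv f_\ast$ is the composite
\[
 f^\ast(Z_+)\xrightarrow{f^\ast(\eta_{Z_+})} f^\ast f_\ast f^\star(Z_+)\xrightarrow{\epsilon} f^\star(Z_+),
\]
and by construction $\eta_{Z_+}$ is induced by the projection $X\times_Y W\rightarrow W$ sectionwise in $W\in \Sm_Y$. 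Tracing this through the formula $f^\ast(Z_+)=(X\times_Y Z)_+$ identifies the resulting map on sections over $W\to X$ with the bijection $(g_1,g_2)\mapsto g_2$ from $\Hom_{\Sm_X}(W,X\times_Y Z)$ to $\Hom_{\Sm_Y}(W,Z)$ above. The main obstacle, and the only nontrivial step, is this diagram chase identifying the adjoint with the pullback bijection; once that is done, both sides are shown to be the representable $(X\times_Y Z)_+$ and the conclusion follows by Yoneda.
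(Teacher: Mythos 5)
Your proposal is correct and amounts to exactly the ``straightforward'' verification that the paper leaves unwritten: both $f^\ast$ and $f^\star$ preserve colimits, every pointed motivic space is a colimit of representables (and both functors act sectionwise in the simplicial direction, so checking on the discrete $Z_+$ suffices), and on $Z_+$ the adjoint of $\Id_{\M(Y)}\rightarrow f_\ast\circ f^\star$ is identified, via the universal property of $X\times_Y Z$, with the canonical bijection $\Hom_{\Sm_X}(W,X\times_Y Z)\cong\Hom_{\Sm_Y}(W,Z)$. The only step you merely sketch, the diagram chase showing that $\epsilon\circ f^\ast(\eta_{Z_+})$ is this bijection, is indeed routine (for instance by Yoneda, both maps correspond to the projection $X\times_Y Z\rightarrow Z$), so the proof is complete and in the spirit intended by the paper.
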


\begin{proof}
This is straightforward.
\end{proof}

In the following, $f^\star$ will be used implicitly as a concrete
description for the left Kan extension $f^\ast$ whenever $f$ is smooth.
It has the advantage that it is strictly functorial. These base change
functors can be extended to the category of (symmetric)
$B$-spectra by levelwise application
in the case
$B\in \{S^1,T\}$. This extension involves 
the identification $f^\ast(B_Y)\xrightarrow{\iso} B_X$,
where $f\colon X\rightarrow Y$ and $B_S$ indicates that $B$ is a pointed
motivic space over $S$. They are still denoted by
$f_\ast\colon \Sp_B(X)\rightarrow \Sp_B(Y)$, etc.

\begin{proposition}\label{prop:strict-funct}
  Let $f\colon X\rightarrow Y$ and $g\colon Y\rightarrow Z$ be morphisms
  of base schemes.
  \begin{enumerate}
    \item\label{item:5} There is an equality $(g\circ f)_\ast = g_\ast \circ f_\ast$
      and a unique natural isomorphism $(g\circ f)^\ast \xrightarrow{\iso} f^\ast \circ g^\ast$.
    \item\label{item:6} If $f$ and $g$ are smooth, the unique natural isomorphism
      $(g\circ f)^\ast \xrightarrow{\iso} f^\ast \circ g^\ast$ is the identity, and there is a 
      unique natural isomorphism $(g\circ f)_\sharp \xrightarrow{\iso} g_\sharp \circ f_\sharp$.
    \item\label{item:7} There are equalities $\id_\ast = \Id$, $\id^\ast = \Id$, and
      a natural isomorphism $\id_\sharp\xrightarrow{\iso} \Id$.
    \item\label{item:8} The diagrams
      \[\xymatrix{
        \M(X) \ar[r]^-{f_\ast} \ar[d]_-{\Sigma^\infty_B} & \M(Y) 
        \ar[d]^-{\Sigma^\infty_B} & & \M(Y) \ar[r]^-{f^\ast} \ar[d]_-{\Sigma^\infty_B}& \M(X)  \ar[d]^-{\Sigma^\infty_B}\\
        \Sp(X,B) \ar[r]^-{f_\ast} & \Sp(Y,B) & \quad & \Sp(Y,B) \ar[r]^-{f^\ast} & \Sp(X,B)  \\
      }\]
      commute, and similarly for $f_\sharp$, and for symmetric spectra.
  \end{enumerate}
\end{proposition}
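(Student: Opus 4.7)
The proposition is a sequence of formal compatibilities, and my plan is to separate the strictly functorial ``right-hand'' assignments ($f_\ast$, and $f^\star$ when $f$ is smooth) from their left adjoints ($f^\ast$, $f_\sharp$), whose functoriality is forced by uniqueness of adjoints.

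For (1) and (3), I would first fix a coherent system of pullbacks in $\Sm_{(-)}$, so that the chosen functors $f^{-1}\colon \Sm_Y\to \Sm_X$ satisfy $(g\circ f)^{-1}=f^{-1}\circ g^{-1}$ and $\id^{-1}=\Id$ on the nose. Since $f_\ast$ is defined as precomposition with $f^{-1}$ by~(\ref{eq:f-lower-star}), this yields the equalities $(g\circ f)_\ast=g_\ast\circ f_\ast$ and $\id_\ast=\Id$. The left adjoint $f^\ast$ is thereby determined up to unique natural isomorphism, and the unique natural isomorphisms $(g\circ f)^\ast\iso f^\ast\circ g^\ast$ and $\id^\ast\iso \Id$ asserted in (1) and (3) are precisely those forced by this uniqueness (equivalently, those induced on representables by~(\ref{eq:f-upper-star})).

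For (2), when $f$ and $g$ are smooth, Lemma~\ref{lem:smooth-base-change} identifies $f^\ast$ with $f^\star$, the precomposition with the functor $\Sm_X\to \Sm_Y$, $(Z\xrightarrow{z} X)\mapsto (Z\xrightarrow{fz} Y)$. Because composition of scheme morphisms is strictly associative and unital, $(g\circ f)^\star=f^\star\circ g^\star$ and $\id^\star=\Id$ as functors, so the unique natural isomorphism from (1) becomes the identity under this identification. The natural isomorphism $(g\circ f)_\sharp\iso g_\sharp\circ f_\sharp$ is then the unique one compatible with the adjunctions $(f_\sharp,f^\star)$ and $(g_\sharp,g^\star)$.

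For (4), I would use that $f^\ast$ is strict symmetric monoidal together with the structural identification $f^\ast B_Y\iso B_X$: at each level,
\[ f^\ast\Sigma^\infty_B(A)_n=f^\ast(A\smash B_Y^{\smash n})=f^\ast A\smash (f^\ast B_Y)^{\smash n}=f^\ast A\smash B_X^{\smash n}=\Sigma^\infty_B(f^\ast A)_n, \]
and the structure maps match by monoidal coherence. The $f_\sharp$ diagram follows either by the same argument applied to $f^\star$ or by passing to adjoints. For $f_\ast$, the sectionwise description of the smash product (Remark~\ref{r:fibrations-fibrant}) gives $f_\ast(A\smash B_X)(Z)=A(X\times_Y Z)\smash B_X(X\times_Y Z)$; when $B=S^1$ the presheaf $B_X$ is constant, so this equals $f_\ast A(Z)\smash S^1=(f_\ast A\smash B_Y)(Z)$, yielding strict commutativity. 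For general $B$ and in the symmetric-spectrum setting, the same sectionwise formula, combined with the canonical map $B_Y\to f_\ast B_X$ adjoint to $f^\ast B_Y\iso B_X$, delivers the required natural identification. The main obstacle is bookkeeping rather than substance: one must verify that the identifications are compatible with smash products, with the $\Sigma_n$-actions in the symmetric-spectrum case, and with the structure maps of suspension spectra. No new idea beyond strict monoidality of $f^\ast$ and uniqueness of adjoints is required, so I would present the argument concisely and leave these verifications as routine.
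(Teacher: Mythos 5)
Your handling of (1)--(3) and of the $f^\ast$ and $f_\sharp$ squares in (4) is in line with what the paper intends (its own proof is only ``straightforward'' plus a pointer to Ayoub): the right-hand functors are precompositions with functors on the sites --- $f_\ast$ with a coherently chosen pullback $\Sm_Y\to\Sm_X$, and, for smooth $f$, $f^\star$ with the strictly associative postcomposition $\Sm_X\to\Sm_Y$ --- so they are strictly functorial, and the left adjoints $f^\ast$, $f_\sharp$ inherit the unique comparison isomorphisms by uniqueness of adjoints; strict symmetric monoidality of $f^\ast$ together with $f^\ast B_Y\iso B_X$ gives the $f^\ast$ square levelwise. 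Your coherent-choice-of-pullbacks caveat is exactly the strictification issue the paper itself defers to Power's coherence result, so that is consistent. Two small points: in (3) the paper asserts the equality $\id^\ast=\Id$, which your identification $f^\ast=f^\star$ for smooth morphisms does give, so say so explicitly; and ``passing to adjoints'' for the $f_\sharp$ square only yields a canonical isomorphism (via the projection-formula identification), not literal commutativity, which is acceptable at the paper's level of rigor but should be flagged.

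The step that fails is your last one: the $f_\ast$ square for ``general $B$''. The functor $f_\ast$ is only lax monoidal, and the comparison $f_\ast A\smash B_Y\to f_\ast(A\smash B_X)$ induced by $B_Y\to f_\ast B_X$ is not an isomorphism when $B=T$. Sectionwise over $Z\in\Sm_Y$ it is the map
\[ A(X\times_Y Z)\smash T_Y(Z)\;\longrightarrow\; A(X\times_Y Z)\smash T_X(X\times_Y Z), \]
and $T_Y(Z)$ involves $\GG_m(Z)=\OO^\times(Z)$ while $T_X(X\times_Y Z)$ involves $\OO^\times(X\times_Y Z)$; already $Y=\Spec{\QQ}$, $X=\Spec{\QQ(i)}$, $Z=Y$ shows these differ, so no ``natural identification'' is delivered --- this is a projection-formula statement for $f_\ast$, which fails at the point-set level (and in general even up to weak equivalence). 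Your argument does establish the $f_\ast$ square strictly for constant $B$, in particular $B=S^1$, where smashing is sectionwise with a constant simplicial set and hence commutes with precomposition along the pullback functor; that is the case the paper actually uses. For $B=T$ you must either restrict the $f_\ast$ compatibility to this lax transformation (and say so), or argue the statement in a different form; as written, the claim that the canonical map provides the required identification is incorrect. Since the paper's proof is contentless on this point, the discrepancy is with the literal statement rather than with a competing argument, but your proof cannot assert an isomorphism that does not exist.
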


\begin{proof}
  This is straightforward. See also \cite[Chapitre 4]{ayoub.operations-2}.
\end{proof}

\begin{lemma}\label{lem:base-change-I}
  Let $f\colon X\rightarrow Y$ be a morphism of base schemes. 
  Then $f^\ast(I_Y)\subseteq I_X$, and
  $f_\sharp(I_X)\subseteq I_Y$ if $f$ is smooth.
\end{lemma}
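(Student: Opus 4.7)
The plan is to reduce both claims to the unstable (motivic space) level and then promote them to spectra. At the spectrum level, Definition~\ref{def:I-S} (combined with the explicit generators listed in the proofs of Propositions~\ref{prop:stabilize-model-structure} and~\ref{prop:symm-spectra}) tells us that $I_S$ consists of the images $\Fr_n i$ (or $\Fr_n^{\sym} i$) of the unstable generators under the left adjoints to evaluation. Since the extension of $f^\ast$ and $f_\sharp$ to (symmetric) spectra is defined levelwise and uses the canonical identifications $f^\ast(B_Y)\xrightarrow{\iso} B_X$ for $B\in\{S^1,T\}$ recorded before the lemma, one checks directly that $f^\ast\circ \Fr_n=\Fr_n\circ f^\ast$ and, when $f$ is smooth, $f_\sharp\circ \Fr_n=\Fr_n\circ f_\sharp$ (and similarly for $\Fr_n^{\sym}$). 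So it suffices to verify the two inclusions on the unstable generators $(Z\times(\partial\Delta^n\hookrightarrow\Delta^n))_+$.

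For $f^\ast$, I would use that it is strict symmetric monoidal, preserves the adjunction of a disjoint basepoint, and is computed on representables by formula~(\ref{eq:f-upper-star}). This yields
$$f^\ast\bigl((Z\times (\partial\Delta^n\hookrightarrow\Delta^n))_+\bigr) = \bigl((X\times_Y Z)\times (\partial\Delta^n\hookrightarrow\Delta^n)\bigr)_+.$$
Since smoothness of morphisms of schemes is stable under base change, $X\times_Y Z\in \Sm_X$, and the right-hand side lies in $I_X$.

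For $f_\sharp$ (with $f$ smooth), I would combine formula~(\ref{eq:f-sharp}) with the projection formula $f_\sharp(A\smash f^\star K)\iso f_\sharp A\smash K$, applied to the constant pointed motivic space $K=(\partial\Delta^n\hookrightarrow\Delta^n)_+$. Since constants are pulled back from the final object, $f^\star K = K$, and one obtains
$$f_\sharp\bigl((Z\times (\partial\Delta^n\hookrightarrow\Delta^n))_+\bigr) = \bigl(Z\times (\partial\Delta^n\hookrightarrow\Delta^n)\bigr)_+,$$
where $Z$ is now regarded as an object of $\Sm_Y$ through the composite $f\circ z$. Since a composite of smooth morphisms is smooth, $Z\in\Sm_Y$ and the image lies in $I_Y$.

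The only mildly non-routine ingredient is the projection formula for $f_\sharp$ against a constant simplicial object, but this follows immediately from~(\ref{eq:f-sharp}), from the fact that $f_\sharp$ preserves colimits, and from the presentation of every motivic space as a colimit of representables. Everything else is a direct unwinding of the formulas~(\ref{eq:f-upper-star}) and~(\ref{eq:f-sharp}) together with basic stability properties of smooth morphisms.
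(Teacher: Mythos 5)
Your argument is correct and is exactly the ``direct inspection'' that the paper's one-line proof leaves to the reader: reduce to the unstable generators via the compatibility of $f^\ast$ and $f_\sharp$ with $\Fr_n$ (which the paper itself notes later, in the proof of Proposition~\ref{prop:base-change-model}), then unwind formulas~(\ref{eq:f-upper-star}) and~(\ref{eq:f-sharp}) together with stability of smooth separated morphisms under base change and composition. No discrepancy with the paper's approach; you have merely supplied the routine details, including the (correct) observation that smashing with a constant simplicial set commutes with $f^\ast$ and $f_\sharp$.
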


\begin{proof}
  This follows from direct inspection.
\end{proof}

\begin{proposition}\label{prop:base-change-model}
  Let $f\colon X\rightarrow Y$ be a morphism of base schemes. Then $(f^\ast,f_\ast)$
  is a Quillen adjoint pair. If $f$ is smooth, then $(f_\sharp,f^\ast)$ is
  a Quillen adjoint pair.
\end{proposition}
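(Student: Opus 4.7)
The plan is to exploit Theorem~\ref{thm:model-structure-spaces}: since $\M(S)$ is weakly finitely generated, it suffices to test the left adjoint on the set $I_S$ of generating cofibrations, and to check that the right adjoint preserves fibrant motivic spaces (which, together with the cofibration check, yields the Quillen pair in the cofibrantly generated setting).

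For $(f^\ast,f_\ast)$, Lemma~\ref{lem:base-change-I} gives $f^\ast(I_Y)\subseteq I_X$, so $f^\ast$ preserves cofibrations, and by adjunction $f_\ast$ preserves acyclic fibrations. To conclude I would verify that $f_\ast$ preserves fibrant motivic spaces, using the formula $(f_\ast B)(Z)=B(X\times_Y Z)$: the sectionwise Kan condition and contractibility of the value at $\emptyset$ are automatic (the latter from $X\times_Y\emptyset=\emptyset$); $\Aff^1$-invariance follows from $X\times_Y(Z\times\Aff^1)\cong(X\times_Y Z)\times\Aff^1$; and Nisnevich descent is preserved because pullback along $X\to Y$ carries Nisnevich elementary distinguished squares in $\Sm_Y$ to Nisnevich elementary distinguished squares in $\Sm_X$. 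For the smooth case, Lemma~\ref{lem:smooth-base-change} identifies $f^\ast$ with the precomposition functor $f^\star$, which is right adjoint to $f_\sharp$; Lemma~\ref{lem:base-change-I} supplies $f_\sharp(I_X)\subseteq I_Y$, and $f^\star$ preserves fibrant motivic spaces even more transparently, since restriction along $z\mapsto f\circ z$ takes Nisnevich squares to Nisnevich squares, $\Aff^1$-projections to $\Aff^1$-projections, and preserves the sectionwise conditions.

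Both arguments extend immediately to (symmetric) $B$-spectra via Propositions~\ref{prop:stabilize-model-structure} and~\ref{prop:symm-spectra}: the generating (acyclic) cofibrations in the spectrum categories are obtained from those in $\M(S)$ by applying $\Fr_n$ (resp.~$\Fr_n^\sym$), and these commute with the base change functors by Proposition~\ref{prop:strict-funct}; fibrancy of spectra is checked levelwise together with conditions on structure maps that are clearly preserved under base change. The main technical point underlying the entire argument is that base change preserves Nisnevich elementary distinguished squares, which reduces to the stability of \'etale morphisms, open immersions, and isomorphisms under pullback---standard, but the heart of why the adjunction is Quillen.
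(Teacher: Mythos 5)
Your geometric verifications (pullback preserves Nisnevich elementary distinguished squares, $\Aff^1$-projections, the empty scheme, and the sectionwise conditions) are exactly the right content, but the formal reduction you hang them on is not a theorem as stated. Knowing that the left adjoint sends the generating cofibrations to cofibrations and that the right adjoint preserves \emph{fibrant objects} does not, in a general cofibrantly generated model category, yield a Quillen pair: the weak equivalences of Definition~\ref{defn:model-structure} are not detected by fibrant objects alone in the way your one-line justification suggests. What suffices is either that the right adjoint preserves \emph{fibrations between fibrant objects} (Dugger's lemma, which is the paper's route), or that the adjunction is already known to be Quillen for the unlocalized (sectionwise projective) structure, in which case ``the right adjoint carries fibrant objects to local objects'' is the standard criterion for descending a Quillen pair through a left Bousfield localization of the source. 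Your proposal never supplies either of these stronger inputs, so the step ``fibrant objects are preserved, hence the pair is Quillen'' is a genuine gap as written.

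The repair is routine, and once made your argument becomes close to the paper's. Either (i) argue as the paper does: by Dugger's lemma it suffices that $f_\ast$ preserve fibrations between fibrant objects; these are detected by a set of acyclic cofibrations with finitely presentable domains and codomains as in Definition~\ref{defn:weakly-fin-gen}, and $f^\ast$ sends each of them to an acyclic cofibration by~(\ref{eq:f-upper-star}) --- this is precisely where your observation about Nisnevich squares and $\Aff^1$-projections enters; or (ii) first observe from~(\ref{eq:f-lower-star}) that $f_\ast$ preserves sectionwise fibrations and acyclic fibrations, so the pair is Quillen projectively, and then your fibrancy check upgrades this to the localized structures, since a projective fibration between local fibrant objects is a local fibration. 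A similar imprecision occurs at the spectrum level: ``conditions on structure maps clearly preserved'' hides that for $f_\ast$ one needs preservation of weak equivalences between fibrant spaces (Ken Brown, using the space-level Quillen property just established), and that for $f^\ast$ in the role of right adjoint one needs smoothness of $f$, via the isomorphism~(\ref{eq:closed}), to commute $f^\ast$ past the relevant loop functor --- exactly the points the paper's proof spells out.
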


\begin{proof}
  Consider the case of pointed motivic spaces first.
  Lemma~\ref{lem:base-change-I} implies that
  $f^\ast$ and (if $f$ is smooth) $f_\sharp$
  preserve cofibrations.
  To prove the first statement, it remains to show that $f_\ast$ preserves
  fibrations. By Dugger's lemma~\cite[A.2]{Dugger:replacing}, it suffices to prove
  that $f_\ast$ preserves fibrations between fibrant motivic spaces. These
  fibrations are detected by the set of acyclic cofibrations
  described in Remark~\ref{r:fibrations-fibrant}. Hence
  it suffices to prove that $f^\ast$ maps each of these special acyclic
  cofibrations in $\M(Y)$ to an acyclic cofibration in $\M(X)$. This is
  straightforward by equation~(\ref{eq:f-upper-star}).
  The proof for the second statement is similar, using
  equation~(\ref{eq:f-sharp}). 

  By Proposition~\ref{prop:base-change-model}, $f^\ast$
  preserves cofibrations. However, one can see directly
  that $f^\ast(\Fr I_Y) \subseteq \Fr I_X$ since $f^\ast$ commutes
  with the functors $\Fr_n$. As in the proof of
  the preceding case,
  it remains to prove that $f_\ast$ preserves stable fibrations
  of stably fibrant motivic $B_X$-spectra. Those coincide with the
  levelwise fibrations. Since $f_\ast $ preserves fibrations,
  it suffices to prove that $f_\ast$ preserves
  stably fibrant motivic $B_X$-spectra. This in turn follows 
  from the preceding case,
  since $f_\ast$ preserves weak equivalences of 
  fibrant pointed motivic spaces.

  If $f\colon X\rightarrow Y$ is smooth, $f_\sharp$ preserves cofibrations
  of motivic $B_X$-spectra. However, one can see directly
  that $f_\sharp(\Fr I_X) \subseteq \Fr I_Y$ since $f_\sharp$ commutes
  with the functors $\Fr_n$. 
  It remains
  to prove that $f^\ast$ preserves fibrations of stably fibrant motivic
  $B_Y$-spectra. As above, it suffices to check that $f^\ast$ preserves
  stably fibrant motivic $B_Y$-spectra. This follows from
  isomorphism~(\ref{eq:closed}),
  together with the fact that $f^\ast$ preserves all weak equivalences
  of pointed motivic spaces. The latter is implied by the fact that $f^\ast$
  is both a left and a right Quillen functor.
\end{proof}

\begin{lemma}\label{lem:base-change-finite}
  Let $f\colon X\rightarrow Y$ be a morphism of base schemes.
  The functor $f^\ast$ preserves finite objects,
  $I$-finite objects, and dualizable objects. If $f$ is smooth,
  $f_\sharp$ preserves finite objects and $I$-finite
  objects.
\end{lemma}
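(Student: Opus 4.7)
The plan is to handle the three cases for $f^\ast$ first, then the two cases for $f_\sharp$, in each case exploiting that the relevant functor is left Quillen (by Proposition~\ref{prop:base-change-model}) together with an explicit formula for its right adjoint.

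For $f^\ast$ and finite objects, recall that a finite object is cofibrant and finitely presentable. Since $f^\ast$ is left Quillen it preserves cofibrancy. For finite presentability we use the adjunction isomorphism $\Hom_{\M(X)}(f^\ast B,-)\iso \Hom_{\M(Y)}(B,f_\ast-)$ and observe from the sectionwise formula~(\ref{eq:f-lower-star}) that $f_\ast$ commutes with filtered colimits (filtered colimits in a presheaf category are computed objectwise). Composing $f_\ast$ with the filtered colimit functor on the right then shows $f^\ast B$ is finitely presentable whenever $B$ is. For $I$-finite objects, Lemma~\ref{lem:base-change-I} gives $f^\ast(I_Y)\subseteq I_X$, and since $f^\ast$ is a left adjoint it preserves pushouts and the initial object, so it transports finite cell presentations from $I_Y$ to finite cell presentations from $I_X$. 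For dualizable objects, the key point is that $f^\ast$ is strict symmetric monoidal; combined with being left Quillen, it descends to a symmetric monoidal functor on homotopy categories, and so carries the duality data $(C,\phi,\psi)$ for $B$ to duality data $(f^\ast C, f^\ast \phi, f^\ast \psi)$ for $f^\ast B$.

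The arguments for $f_\sharp$ when $f$ is smooth are analogous. Preservation of cofibrancy is again immediate from $f_\sharp$ being left Quillen. For finite presentability, the right adjoint is $f^\ast\cong f^\star$ by Lemma~\ref{lem:smooth-base-change}, and the formula $(f^\star B)(Z\xrightarrow{z} X)=B(Z\xrightarrow{fz}Y)$ shows $f^\star$ is computed sectionwise, hence commutes with all colimits, in particular filtered ones. Preservation of $I$-finiteness then follows exactly as above from the inclusion $f_\sharp(I_X)\subseteq I_Y$ in Lemma~\ref{lem:base-change-I} together with the fact that $f_\sharp$, being a left adjoint, preserves pushouts and the initial object.

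None of these steps presents a real obstacle: the work has essentially been done already, in the sense that Lemma~\ref{lem:base-change-I}, Proposition~\ref{prop:base-change-model}, Lemma~\ref{lem:smooth-base-change}, and the explicit description of $f_\ast$ and $f^\star$ as sectionwise functors do all the heavy lifting. The mildest subtlety is the dualizability case, where one must be careful that strict symmetric monoidality of $f^\ast$ is enough to descend to the homotopy category; this is fine because $f^\ast$ is left Quillen and all objects involved in the definition of dualizability are cofibrant, so the symmetric monoidal structure passes to $\mathrm{Ho}(\M(X))$ without modification.
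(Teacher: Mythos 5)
Your proof is correct and follows essentially the same route as the paper: cofibrancy from the Quillen adjunctions of Proposition~\ref{prop:base-change-model}, finite presentability from the right adjoints $f_\ast$ and $f^\star$ being computed sectionwise and hence preserving (filtered) colimits, $I$-finiteness from Lemma~\ref{lem:base-change-I} together with $f^\ast$ and $f_\sharp$ preserving the relevant colimits, and dualizability from $f^\ast$ being strict symmetric monoidal. You merely spell out details (the adjunction isomorphism, the descent of the monoidal structure to the homotopy category) that the paper leaves implicit.
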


\begin{proof}
  The statements about $I$-finiteness appear already in
  the proof of Proposition~\ref{prop:base-change-model}.
  The implicit statements about cofibrancy follow from
  Proposition~\ref{prop:base-change-model}. 
  Observe that $f^\ast$ preserves finitely presentable
  objects since its right adjoint $f_\ast$ preserves filtered
  (even all!) colimits, and similarly for $f_\sharp$.
  Since $f^\ast$ is strict symmetric monoidal, it preserves
  dualizable objects.
\end{proof}

Let $\widetilde{\Sm}_S$ be the subcategory of $\Sm_S$ having the same objects,
but only smooth $S$-morphisms as morphisms.
One may summarize some of the results above by
saying that the model categories considered so far
are Quillen functors on $\widetilde{\Sm}_S$, but only
Quillen pseudo-functors on $\Sm_S$. It is possible to strictify
these Quillen pseudo-functors to a naturally (not just Quillen)
equivalent 
Quillen functor on
$\Sm_S$ by the categorical result \cite{power}. This will
be assumed from now on, without applying notational
changes.

\section{Algebraic $K$-theory of $\Aff^1$-homotopy theory}
\label{sec:algebraic-k-theory}

  Let $\M(S)$ be the model category of
  pointed motivic spaces over $S$, equipped
  with the $\mathbb{A}^1$-local Nisnevich
  model structure~\ref{defn:model-structure}.
  Before applying Waldhausen's $K$-theory construction,
  one of the
  finiteness notions 
  introduced in~\ref{defn:finiteness} will be imposed,
  indicated by the respective superscript
  $\M^\g(S)$ for $\g\in \{\f,\hf,\ifin,\ihf\}$.
  Unless otherwise specified, the $I$-finiteness
  notions always refer to the set
  of generating cofibrations listed
  in~\ref{defn:model-structure}.

\begin{defn}\label{defn:motivic-a-theory}
  Let $\g\in \{\f,\hf,\ifin,\ihf\}$. Then
  $\AAA(\M^\g(S))$ denotes the spectrum obtained by
  applying Waldhausen's $\Sdot$-construction to
  the Waldhausen category $\M^\g(S)$.
\end{defn}

Technically speaking, $\AAA(\M^\g(S))$ is the algebraic $K$-theory
of the one-point motivic space over $S$. It is
possible to consider the algebraic $K$-theory of an arbitrary
motivic space $B$ over $S$ by viewing the canonical
Waldhausen category of 
$\g$-finite motivic spaces over $S$ which are retractive over $B$,
as mentioned abstractly in Proposition~\ref{prop:retractive-model}.

\begin{proposition}[Waldhausen]\label{prop:apoint-retract}
  Let $\g\in \{\f,\hf,\ifin,\ihf\}$.
  The spectrum $\AAA(\M^\g(\mathbb{C}))$ contains $\AAA(\ast)$ as
  a retract. In particular, it is nontrivial.
\end{proposition}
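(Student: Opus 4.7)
The plan is to exhibit $\AAA(\ast)$ as a retract by constructing two exact Waldhausen functors whose composition is the identity. Let $\iota\colon \sSet_\bullet \to \M(\CC)$ be the constant-presheaf functor, equivalently $\iota(L) = L \smash \one$ where $\one = (\Spec{\CC})_+$; this is the structural left Quillen functor provided by the $\sSet_\bullet$-enrichment of $\M(\CC)$. Let $\mathrm{Re}\colon \M(\CC) \to \sSet_\bullet$ be the complex realization, characterized on representables by $\mathrm{Re}(X_+) = \Sing(X(\CC))_+$ and extended by preservation of colimits. The composition $\mathrm{Re}\circ \iota$ sends $L$ to $L\smash \Sing(\Spec{\CC}(\CC))_+ = L\smash S^0 \iso L$ naturally in $L$, so at the level of $K$-theory $\AAA(\mathrm{Re})\circ \AAA(\iota)$ is homotopic to the identity of $\AAA(\ast)$.

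The first step is to verify that $\iota$ is an exact left Quillen functor preserving each of the finiteness classes. Taking $X = \Spec{\CC}$ in Definition~\ref{defn:model-structure} shows that $\iota$ maps the standard generating cofibrations of $\sSet_\bullet$ into $I_\CC$, so $\iota$ preserves $I$-finiteness; finite presentability of $\iota(L)$ follows since $\Hom_{\M(\CC)}(\iota(L),B) = \sSet_\bullet(L, B(\Spec{\CC}))$ commutes with filtered colimits in $B$ when $L$ is finitely presentable. The second step is to check that $\mathrm{Re}$ is an exact strong symmetric monoidal left Quillen functor from $\M(\CC)$ (with the $\Aff^1$-local Nisnevich model structure of Definition~\ref{defn:model-structure}) to $\sSet_\bullet$: the required input is that Nisnevich elementary distinguished squares become homotopy pushouts of simplicial sets after applying $\Sing((-)(\CC))$, and that $\Sing(\Aff^1(\CC)) = \Sing(\CC)$ is contractible. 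Since every smooth $\CC$-variety has the homotopy type of a finite CW complex (by triangulation of quasi-projective varieties together with Hironaka resolution), $\mathrm{Re}$ sends representables to homotopy finite simplicial sets and therefore restricts to an exact Waldhausen functor $\mathrm{Re}\colon \M^\hf(\CC) \to \sSet_\bullet^\hf$. This settles the case $\g=\hf$, and the cases $\g\in\{\f,\ifin,\ihf\}$ follow from the $K$-theory equivalences of Proposition~\ref{prop:finiteness}, which identify the various flavors of $\AAA(\sSet_\bullet^{\g})$ and of $\AAA(\M^{\g}(\CC))$.

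The main technical obstacle is the verification that $\mathrm{Re}$ really is left Quillen on the $\Aff^1$-local model structure of Definition~\ref{defn:model-structure}, rather than merely on one of the many Quillen-equivalent variants in the literature; this boils down to checking that, on the chosen generating set of acyclic cofibrations detecting fibrations with fibrant codomain, the functor $\mathrm{Re}$ lands in genuine weak equivalences of simplicial sets, which uses Nisnevich descent and $\Aff^1$-invariance of $\Sing((-)(\CC))$. The secondary issue is that $\Sing(X(\CC))$ is not in general a finite simplicial set even when $X$ is, which is why the natural home for the argument is $\hf$ rather than $\f$; this is precisely what is remedied by invoking Proposition~\ref{prop:finiteness}.
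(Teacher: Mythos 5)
Your construction is essentially the paper's: the section is the constant pointed motivic space functor and the retraction is complex realization, with the same finiteness bookkeeping (the constant functor takes finite simplicial sets to $I_\CC$-finite motivic spaces; realizations of representables are homotopy finite; Lemma~\ref{lem:fin-ifin} plus closure under retracts handles finite objects). The one genuine difference is the target of the realization: the paper uses the standard left Quillen realization $\M(\CC)\rightarrow \Top_\bullet$, so the composite with the constant functor is the geometric realization $\lvert - \rvert\colon \sSet_\bullet\rightarrow\Top_\bullet$ rather than the identity, and the retract is obtained by citing \cite[Theorem 2.1.5]{Waldhausen}, which says $\lvert - \rvert$ is a $K$-theory equivalence. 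This sidesteps what you yourself flag as the main technical obstacle, namely verifying that your $\Sing((-)(\CC))$-valued left Kan extension is left Quillen for the model structure of Definition~\ref{defn:model-structure}: for a retract up to homotopy the composite only needs to be a $K$-theory equivalence, not the identity, so using the well-documented topological realization is the cheaper route.

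One step of yours is inaccurate as stated: Proposition~\ref{prop:finiteness} does \emph{not} identify all flavors of $\AAA(\M^\g(\CC))$. It gives $K$-theory equivalences only for the horizontal inclusions $\M^{\ifin}\hookrightarrow\M^{\ihf}$ and $\M^{\f}\hookrightarrow\M^{\hf}$, while for $\M^{\ifin}\hookrightarrow\M^{\f}$ statement~(3) deliberately asserts only that the homotopy fiber has contractible path components -- this is exactly the finiteness-obstruction phenomenon -- so the case $\g=\ifin$ cannot be deduced from $\g=\hf$ by ``identifying flavors.'' The fix is immediate with the functors you already have: the constant functor lands in $\M^{\ifin}(\CC)\subseteq\M^{\g}(\CC)$ for every $\g$, the realization restricted to each $\M^{\g}(\CC)$ lands in homotopy finite simplicial sets, and the resulting composite $\sSet_\bullet^{\f}\rightarrow\sSet_\bullet^{\hf}$ is a $K$-theory equivalence by Proposition~\ref{prop:finiteness}(2); so run the same pair of functors for each $\g$ separately instead of transporting the retract between flavors of $\AAA(\M^{\g}(\CC))$.
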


\begin{proof}
  The constant pointed motivic space functor
  $\sSet_\bullet\rightarrow \M(S)$ and the complex realization functor
  $\M(\CC)\rightarrow \Top_\bullet$ are left Quillen functors.
  The constant pointed motivic space sends (homotopy) finite
  pointed simplicial sets to $I_\CC$-finite pointed motivic spaces.
  The complex realization functor sends representables to homotopy finite
  pointed topological spaces, and hence $I_\CC$-finite pointed motivic
  spaces to homotopy finite pointed topological spaces.
  A finite pointed motivic space is a retract of an $I_\CC$-finite
  pointed motivic space. Since homotopy finite pointed
  topological spaces are closed under retracts, the
  complex realization functor preserves homotopy finiteness.
  Hence both functors induce maps on
  Waldhausen $K$-theory spectra. Their
  composition coincides with the
  geometric realization functor
  \[ \lvert - \rvert\colon \sSet_\bullet \rightarrow \Top_\bullet, \]
  which induces an equivalence on Waldhausen
  $K$-theory by \cite[Theorem 2.1.5]{Waldhausen}.
  The statement follows.
\end{proof}

\begin{proposition}\label{prop:retract-general}
  Let $S$ be a base scheme and $\g\in \{\f,\hf,\ifin,\ihf\}$. 
  The spectrum $\AAA(\M^\g(S))$ contains $\AAA(\ast)$ as
  a retract. In particular, it is nontrivial.
\end{proposition}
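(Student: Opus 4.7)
The plan is to mirror the proof of Proposition~\ref{prop:apoint-retract}, replacing complex realization with a pointed homotopy colimit over $\Sm_S^{\op}$. For the forward map I would argue exactly as in the complex case: the constant pointed motivic space functor $c_S\colon \sSet_\bullet \rightarrow \M(S)$ is left Quillen because $S\in \Sm_S$ implies that $c_S$ sends $(\partial\Delta^n \hookrightarrow \Delta^n)_+$ to $(S\times (\partial\Delta^n \hookrightarrow \Delta^n))_+ \in I_S$, and $c_S$ preserves all the finiteness notions of Definition~\ref{defn:finiteness}. This yields the map $\AAA(c_S)\colon \AAA(\ast)\rightarrow \AAA(\M^\g(S))$.

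To construct a retraction, I would introduce the pointed homotopy colimit functor
\[ \lvert-\rvert_S\colon \M(S) \rightarrow \sSet_\bullet, \qquad B \mapsto \operatorname{hocolim}\nolimits_{Y\in \Sm_S^{\op}} B(Y), \]
realized strictly via the Bousfield-Kan formula. Two key computations drive the argument. First, $\Sm_S^{\op}$ has the empty scheme as terminal object, hence contractible nerve, so $\lvert c_S(L)\rvert_S \simeq L$ for every $L\in \sSet_\bullet$. Second, for each $Y\in \Sm_S$ the slice $\Sm_S/Y$ has the terminal object $\id_Y$ with contractible nerve, so the homotopical Yoneda lemma gives $\lvert Y_+\rvert_S \simeq S^0$. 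Consequently each generating cofibration $(Y\times (\partial\Delta^n\hookrightarrow \Delta^n))_+$ of $I_S$ is sent to a map weakly equivalent to $(\partial\Delta^n \hookrightarrow \Delta^n)_+$, so $\lvert-\rvert_S$ sends $I_S$-homotopy-finite motivic spaces to homotopy finite pointed simplicial sets.

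Next I would verify that $\lvert-\rvert_S$ is left Quillen for the $\Aff^1$-local Nisnevich model structure on $\M(S)$. The Bousfield-Kan hocolim is left Quillen from the projective model structure on $\mathrm{Fun}(\Sm_S^{\op},\sSet_\bullet)$, whose generating cofibrations coincide with $I_S$. For the Bousfield localization, the $\Aff^1$-invariance map $(Y\times \Aff^1)_+ \to Y_+$ and each Nisnevich descent square are sent to weak equivalences, because $\lvert-\rvert_S$ collapses every representable to $S^0$. Hence $\lvert-\rvert_S$ restricts to an exact functor $\M^\g(S)\rightarrow \sSet_\bullet^\g$ for $\g\in \{\hf,\ihf\}$, and the composition $\AAA(\lvert-\rvert_S)\circ \AAA(c_S)$ is a weak equivalence via the natural identification $\lvert c_S(L)\rvert_S \simeq L$. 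The remaining cases $\g\in \{\f,\ifin\}$ then follow from Proposition~\ref{prop:finiteness}.

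The main obstacle is promoting $\lvert-\rvert_S$ to an exact functor of Waldhausen categories on the nose rather than only up to natural weak equivalence. Bousfield-Kan supplies a strict functor, but one must verify that it preserves cofibrations, pushouts along cofibrations, and weak equivalences in a Waldhausen-compatible way. Since no analogue of complex realization is available over a general base scheme, and in particular in positive characteristic, this homotopy-colimit substitute appears to be the most natural tool.
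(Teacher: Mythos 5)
Your forward map (the constant functor) is fine and matches the paper, but the proposed retraction via the Bousfield--Kan homotopy colimit over $\Sm_S^\op$ has a genuine gap: this functor does not invert all motivic weak equivalences, so it is neither left Quillen for the localized structure nor exact on $\M^\g(S)$. The computation you rely on --- $\lvert Y_+\rvert_S\simeq S^0$ for \emph{every} $Y\in\Sm_S$, because $\Sm_S/Y$ has a terminal object --- is precisely what breaks it. The empty scheme lies in $\Sm_S$, and Definition~\ref{defn:model-structure} forces $B(\emptyset)$ to be contractible for fibrant $B$, so $\emptyset_+\to\ast$ is a weak equivalence between $I$-finite objects of $\M(S)$; your functor sends it to $S^0\to\ast$, which is not a weak equivalence, so the universal property of left Bousfield localization you invoke does not apply (the ``empty square'' is part of the Nisnevich localization, alongside the distinguished squares and $\Aff^1$-projections). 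The failure is not a repairable edge case: Nisnevich locality gives $(X\sqcup Y)_+\simeq X_+\vee Y_+$, hence $\langle (X\sqcup Y)_+\rangle=\langle X_+\rangle+\langle Y_+\rangle$ in $\pi_0\AAA^\g(S)$, so any exact functor to $\sSet_\bullet^{\hf}$ sending every representable to $S^0$ would force $1=1+1$ in $\pi_0\AAA(\ast)\cong\ZZ$. Thus the obstruction is not the strictification issue you flag at the end, but that the homotopy colimit over all of $\Sm_S^\op$ is too coarse: a correct retraction must remember the connected components of a smooth $S$-scheme rather than collapse it to a point.

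This is what the paper's proof arranges, by a different mechanism than realization-type functors: it reduces to connected $S$ and forms a further left Bousfield localization $\M_{\mathrm{hell}}(S)$ of the Nisnevich-local projective structure at the maps $X\to S$ with $X$ connected. There the constant functor $\sSet_\bullet\to\M_{\mathrm{hell}}(S)$ is a Quillen equivalence whose right adjoint is evaluation at $S$, while the identity $\M(S)\to\M_{\mathrm{hell}}(S)$ is left Quillen and preserves finite and $I$-finite objects; the retraction is the composite $\AAA(\M^\g(S))\to\AAA(\M_{\mathrm{hell}}^\g(S))\simeq\AAA(\ast)$, using Sagave's theorem that Quillen equivalences induce $K$-theory equivalences. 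On representables this records the set of connected components (in particular it is additive for disjoint unions and kills $\emptyset_+$), avoiding the contradiction above. If you want to keep a ``global sections'' picture, the usable functor is a right derived evaluation at $S$ (equivalently, the further localization), not a homotopy colimit over $\Sm_S^\op$.
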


\begin{proof}
  It suffices to consider a connected base scheme $S$.
  Let $\M_{\mathrm{hell}}(S)$ be the left Bousfield localization of the 
  Nisnevich local projective model structure with
  respect to the maps $X\rightarrow S$ in $\Sm_S$ such that $X$
  is connected. It is a left Bousfield localization of
  the $\Aff^1$-Nisnevich local projective model structure,
  and again weakly finitely generated.
  The identity functor is a left Quillen functor
  from $\M(S)$ to $\M_{\mathrm{hell}}(S)$ preserving
  finitely presentable cofibrant pointed motivic spaces.
  If $B \in\M_{\mathrm{hell}}(S)$ is fibrant, it is locally
  constant, since $B(S) \rightarrow B(X)$ is a weak equivalence
  for every smooth morphism $X\rightarrow S$ such that $X$ is connected.

  The constant pointed motivic space functor
  $\sSet_\bullet \rightarrow \M_{\mathrm{hell}}(S)$ is a left Quillen
  functor, but also a Quillen equivalence. Its right adjoint
  is the evaluation at the terminal scheme. A map of
  fibrant objects in $\M_{\mathrm{hell}}(S)$ is a weak equivalence
  if and only if it is a levelwise weak equivalence. Let
  $B\rightarrow C$ be a map of motivic spaces over $S$
  which are fibrant in $\M_{\mathrm{hell}}(S)$. If the
  map $B(S)\rightarrow C(S)$ is a weak equivalence, then $B(X)\rightarrow C(X)$
  is a weak equivalence for every connected $S$-scheme $X$.
  Since every smooth $S$-scheme admits a Zariski open cover
  by smooth connected $S$-schemes, $B(X)\rightarrow C(X)$
  is then a weak equivalence for every smooth $S$-scheme.
  It follows that 
  evaluation at the terminal scheme $S$ preserves and detects
  weak equivalences of fibrant objects in $\M_{\mathrm{hell}}(S)$.
  If $L$ is a pointed simplicial set, considered as a constant motivic
  space over $S$, its fibrant replacement in $\M_{\mathrm{hell}}(S)$
  sends $X$ to the product of $L$ indexed over the connected
  components of $X$. In particular, the derived unit of the 
  adjunction is the identity. This concludes the proof that
  the constant motivic space functor 
  $\sSet_\bullet \rightarrow \M_{\mathrm{hell}}(S)$ is a Quillen equivalence.

  Both the constant motivic space functor 
  $\sSet_\bullet \rightarrow \M(S)$ and the 
  identity functor $\M(S) \rightarrow \M_{\mathrm{hell}}(S)$ preserve
  finite and $I$-finite objects, hence induce maps
  on suitable Waldhausen categories. Since 
  $\sSet_\bullet \rightarrow \M_{\mathrm{hell}}(S)$ is a Quillen
  equivalence, it is a $K$-theory equivalence by \cite[Theorem 3.3]{Sagave}.
  The result follows.
\end{proof}

The full technology of Waldhausen's algebraic $K$-theory of spaces
requires that the weak equivalences satisfy the extension axiom.
This axiom is not satisfied in the category of pointed motivic
spaces (the counterexample given for pointed simplicial sets
in \cite[Section 1.2]{Waldhausen} right after the definition of
the extension axiom extends). However, it is satisfied in
the category of $S^1$-spectra of pointed motivic spaces over $S$.
The suspension spectrum functor induces a $K$-theory equivalence,
as one deduces from the following theorem. 

\begin{remark}\label{rem:stabilization-a}
  Theorem~\ref{thm:stabilization-A}  including its assertion
  for the $I$-finiteness notions show that  
  \[ \AAA^\g(S) := \AAA(\Sp^\g(S)) \leftarrow \AAA(\M^g(S)) \]
  is a $K$-theory equivalence for every $\g\in \{\f,\hf,\ifin,\ihf\}$.
  Moreover, it turns out
  to be natural in the base scheme $S$. Thus in the discussion below
  Waldhausen's fibration theorem may be applied to $\AAA^\g(S)$.
\end{remark}

As a consequence of Lemma~\ref{lem:base-change-finite}
and Proposition~\ref{prop:base-change-model}, 
the functors $f^\ast$ and (if applicable) $f_\sharp$ induce
exact functors on Waldhausen categories. 

\begin{proposition}\label{prop:open-split}
  Let $j\colon U\hookrightarrow S$ be an open embedding
  of base schemes, with reduced closed
  complement $i\colon Z\hookrightarrow S$. Then the functors 
  $j^\ast$ and $i^\ast$ induce a splitting
  \[ \AAA^\g(S)\xrightarrow{\sim} \AAA^\g(U)\times \AAA^\g(Z) \]
  for $\g\in\{\f,\hf,\ifin,\ihf\}$.
\end{proposition}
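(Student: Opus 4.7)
The plan is to pass to the stable setting via Remark~\ref{rem:stabilization-a}, work with $\AAA(\Sp^\g(-))$ where Waldhausen's extension axiom holds, and there exhibit the standard localization cofiber sequence of exact endofunctors
\[ j_\sharp j^\ast \longrightarrow \Id \longrightarrow i_\ast i^\ast \]
on $\Sp^\g(S)$. Waldhausen's additivity theorem, together with four base-change cancellation identities, will then produce the splitting.

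The map of the statement is induced by the exact functor $F = (j^\ast, i^\ast)\colon \Sp^\g(S) \to \Sp^\g(U) \times \Sp^\g(Z)$ (Lemma~\ref{lem:base-change-finite}, Proposition~\ref{prop:base-change-model}), and its proposed $K$-theoretic inverse is $G(A, C) := j_\sharp A \vee i_\ast C$. The functor $j_\sharp$ is exact by the same references; for $i_\ast$, a separate input is that on spectra $i_\ast$ for a closed immersion agrees with the exceptional pushforward $i_!$ and is thus a left adjoint (to $i^!$), hence preserves compact objects and $\g$-finiteness. From formulas~(\ref{eq:f-upper-star}), (\ref{eq:f-sharp}), (\ref{eq:f-lower-star}) applied to $U\times_S U = U$, $Z\times_S Z = Z$, and $U\times_S Z = \emptyset$, one reads off $j^\ast j_\sharp \simeq \Id$, $i^\ast i_\ast \simeq \Id$, $j^\ast i_\ast \simeq 0$, and $i^\ast j_\sharp \simeq 0$; these yield $F\circ G \simeq \Id$ immediately.

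The main obstacle is constructing the cofiber sequence. For a representable $E = \Sigma^\infty X_+$ with $X\in \Sm_S$, the cofiber of $\Sigma^\infty (X_U)_+ \to \Sigma^\infty X_+$ is identified with $i_\ast i^\ast \Sigma^\infty X_+$ by the Morel-Voevodsky localization / homotopy purity theorem of~\cite{mv}. Since $j_\sharp j^\ast$ and $i_\ast i^\ast$ preserve finite homotopy colimits, and $\g$-finite spectra are generated by representables under such colimits, the cofiber sequence propagates to all of $\Sp^\g(S)$. Uniformity across $\g \in \{\f,\hf,\ifin,\ihf\}$ will require separate checks, in particular that $i_\ast$ preserves each of these four finiteness classes.

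Granted the cofiber sequence, Waldhausen's additivity theorem~\cite[Theorem~1.4.2]{Waldhausen} applied to $j_\sharp j^\ast \to \Id \to i_\ast i^\ast$ yields $K(G\circ F) \simeq \Id$ on $\AAA^\g(S)$. Combined with $F\circ G \simeq \Id$, this shows $F$ and $G$ are mutually inverse $K$-theory equivalences, giving the claimed splitting with the induced map as in the statement.
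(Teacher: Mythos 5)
Your overall strategy (localization cofiber sequence plus additivity, with $G(A,C)=j_\sharp A\vee i_\ast C$ as an inverse) founders on the functor $i_\ast$, and this is precisely the point the paper's own argument is engineered to avoid. For the $K$-theoretic statement you need $i_\ast$ to be an \emph{exact functor of Waldhausen categories} $\Sp^\g(Z)\rightarrow\Sp^\g(S)$: it must preserve cofibrations, cofibrancy, and the chosen finiteness notion for each $\g\in\{\f,\hf,\ifin,\ihf\}$. None of this is available. $i_\ast$ is the \emph{right} Quillen member of the pair $(i^\ast,i_\ast)$, so it does not preserve cofibrations or cofibrant objects; $i_\ast$ of an $I_Z$-finite or finitely presentable spectrum is not $I_S$-finite or finitely presentable on the nose; and your justification via ``$i_\ast\simeq i_!$ is a left adjoint, hence preserves compact objects'' is a statement about the six-functor formalism in the $T$-stable homotopy category, whereas $\AAA^\g(S)$ is defined (Remark~\ref{rem:stabilization-a}) from point-set categories of $\g$-finite $S^1$-spectra, where compactness in the homotopy category is in any case not the same as the strict notions $\f$ and $\ifin$. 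A related, smaller issue: the counit $j_\sharp j^\ast\E\rightarrow\E$ is not objectwise a cofibration in the projective-type structure of Definition~\ref{defn:model-structure} (already $(X_U)_+\rightarrow X_+$ is not one), so to apply additivity you must replace it by a mapping cylinder and take the genuine cofiber functor $Q$, which does stay in $\Sp^\g(S)$ but is only \emph{weakly equivalent} to $i_\ast i^\ast$; identifying $K(Q)$ with something factoring through $\AAA^\g(Z)$ is then exactly the crux, and in your write-up it is smuggled in through the unjustified exactness of $i_\ast$. (Your ``read off'' of $i^\ast i_\ast\simeq\Id$ is also a derived statement needing the Morel--Voevodsky gluing theorem, not just formula~(\ref{eq:f-upper-star}), since $i$ is not smooth.)

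The paper's proof circumvents $i_\ast$ entirely: it factors $j^\ast$ through the Waldhausen category $\Sp^\g(S\vert U)$ with coarsened weak equivalences, shows $j^\ast\colon\Sp^\g(S\vert U)\rightarrow\Sp^\g(U)$ is a $K$-theory equivalence by Waldhausen's Approximation Theorem (using that $\Id\rightarrow j^\ast j_\sharp$ is the identity), identifies the homotopy fiber of $j^\ast$ via the Fibration Theorem with the $K$-theory of the subcategory of $\g$-finite spectra that become contractible over $U$, and then uses the localization cofiber sequence $j_\sharp j^\ast\E\rightarrow\E\rightarrow i_\ast i^\ast\E$ only as a \emph{property} of objects of $\Sp^\g(S)$, to verify that $i^\ast$ (a left Quillen functor which does preserve finiteness, Lemma~\ref{lem:base-change-finite}) from that subcategory to $\Sp^\g(Z)$ satisfies the approximation property \cite{Sagave}; the splitting then comes from the section $j_\sharp$ of $j^\ast$. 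If you want to salvage your additivity approach, you would have to either prove that a suitable cofibrant-and-finite replacement of $i_\ast$ exists as an exact functor on $\g$-finite objects (which is essentially as hard as the approximation argument), or switch, as the paper does, to comparing the ``supported on $Z$'' category with $\Sp^\g(Z)$ via $i^\ast$.
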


\begin{proof}
  Consider the homotopy fiber sequence
  \[ \hofib(j^\ast) \rightarrow \AAA^\g(S)\xrightarrow{j^\ast} \AAA^\g(U) \]
  of spectra. In order to identify the homotopy fiber of $j^\ast$, 
  the map $\AAA^\g(S)\xrightarrow{j^\ast} \AAA^\g(U) $
  is factored as follows. 
  Let $\mathrm{v}\Sp^\g(S)$ denote the subcategory of
  maps $f$ such that $j^\ast(f)$ is a weak equivalence in $\Sp(U)$.
  Let $\Sp^\g(S \vert U)$ denote the resulting Waldhausen category
  $(\Sp^\g(S),\ast,\mathrm{v}\Sp^\g(S),\mathrm{cof}\Sp^\g(S))$.
  The identity can then be regarded as an exact functor
  $\Phi\colon \Sp^\g(S)\rightarrow \Sp^\g(S \vert U)$.
  Almost by definition, $j^\ast \colon \Sp^\g(S \vert U) \rightarrow \Sp^\g(M(U))$
  satisfies the conditions of Waldhausen's Approximation Theorem
  \cite[Theorem 1.6.7]{Waldhausen}. In fact, $j^\ast$ detects and
  preserves weak equivalences by definition. 
  If $\E$ is a $\g$-finite $S^1$-spectrum
  over $S$ and $j^\ast(\E)\rightarrow \D$ is a map
  of $\g$-finite $S^1$-spectra over $U$, consider it as the
  map 
  \[ j^\ast(\E)=j^\ast j_\sharp j^\ast (\E)\rightarrow j^\ast j_\sharp (\D)=(\D). \]
  Here the fact that the unit
  $\Id\rightarrow j^\ast j_\sharp$ is the identity enters. The
  map $j_\sharp j^\ast (\E) \rightarrow j_\sharp (\D)$ can be factored
  via the simplicial mapping cylinder as a cofibration of
  $\g$-finite $S^1$-spectra over $U$, followed by a weak equivalence.
  Therefore, $j^\ast \colon \Sp^\g(S \vert U) \rightarrow \Sp^\g(M(U))$ satisfies
  the second approximation property, whence it is a $K$-theory
  equivalence by the Approximation Theorem \cite[Theorem 1.6.7]{Waldhausen}.
  Thus $\hofib(j^\ast)$ is weakly equivalent to the homotopy fiber
  of $\Phi$. The latter may be identified, by the Fibration Theorem
  \cite[Theorem 1.6.4]{Waldhausen}, with the algebraic $K$-theory
  of the sub-Waldhausen category $\Sp^{\g,j^\ast\simeq \ast}(S)$ 
  of $\g$-finite $S^1$-spectra $\E$ over
  $S$ such that $j^\ast(\E)$ is (weakly) contractible. The homotopy
  cofiber sequence
  \[ j_\sharp j^\ast (\E) \rightarrow \E \rightarrow i_\ast i^\ast(\E), \]
  which is an $S^1$-spectrum version of \cite[Theorem 3.2.21]{mv}, implies
  that the induced functor $i^\ast\colon \Sp^{\g,j^\ast\simeq \ast}(S)
  \rightarrow \Sp^\g(Z)$ satisfies the special approximation property.
  Thus $i^\ast$ induces a $K$-theory equivalence 
  by \cite[Theorem 2.8]{Sagave}.
  It remains to observe the splitting, which is induced by
  $j_\sharp\colon \M(U)\rightarrow \M(S)$, the left 
  adjoint of $j^\ast$. It is a left Quillen functor
  preserving the set of generating cofibrations given in~\ref{defn:model-structure}.
  The unit $\Id \rightarrow j^\ast j_\sharp$ is the identity, since
  $j$ is an open embedding. 
\end{proof}

In particular, the map $\AAA^\g(S)\rightarrow \AAA^\g(\Aff^1_S)$
induced by the projection is not a weak equivalence for
every $\g\in \{\f,\hf,\ifin,\ihf\}$, because
$\AAA^\g(\Aff^1_S\minus \{0\})$
is not contractible by Proposition~\ref{prop:retract-general}. 

\begin{corollary}\label{cor:A-t-loop}
  Let $\g\in\{\f,\hf,\ifin,\ihf\}$.
  There is a natural weak equivalence
  \[ \Omega_T\AAA^\g(S) \xrightarrow{\sim} \AAA^\g(S). \]
\end{corollary}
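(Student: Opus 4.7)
The plan is to bootstrap Theorem~\ref{thm:stabilization-A} from $S^1$-stabilization to further $T$-stabilization, and then use that on a $T$-stable Waldhausen category the $T$-smash is automatically a $K$-theory self-equivalence. Since Remark~\ref{rem:stabilization-a} identifies $\AAA^\g(S)$ with $\AAA(\Sp^\g(\M(S)))$ naturally in $S$, it is enough to produce a natural weak equivalence $\Omega_T \AAA(\Sp^\g(\M(S))) \simeq \AAA(\Sp^\g(\M(S)))$.

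The first main step is a $T$-analogue of Theorem~\ref{thm:stabilization-A}: the $T$-suspension spectrum functor
\[ \Sigma_T^\infty \colon \Sp^\g(\M(S)) \to \Sp^\g_T(\Sp^\g(\M(S))) \]
is a $K$-theory equivalence. Every step of the proof of Theorem~\ref{thm:stabilization-A} goes through with $T = S^1 \smash S^{1,1}$ replacing $S^1$: $T$ is a suspension, so Proposition~\ref{prop:stabilize-model-structure} produces a stable, weakly finitely generated model structure on $T$-spectra; $T$ is cofibrant and finitely presentable; smashing with $T$ preserves each of the finiteness flavors in Definition~\ref{defn:finiteness} (for $\g = \ifin$ one uses that $T$ is $I_S$-homotopy finite); and the hypotheses of Lemma~\ref{lem:stably-contr} are satisfied for $T$-spectra, so the Approximation Theorem argument runs as before.

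The second main step is to observe that on $\Sp^\g_T(\Sp^\g(\M(S)))$ the functor $\Sigma_T = {-} \smash T$ is a left Quillen self-equivalence. Via the $\SymSp_T(\M(S))$-enrichment from Proposition~\ref{prop:symm-spectra}, this categorical self-equivalence descends to the external smash with $T$ on the $K$-theory spectrum $\AAA(\Sp^\g_T(\Sp^\g(\M(S))))$. Hence $\Omega_T$ is a weak equivalence on this spectrum, and composing with the $K$-theory equivalence from the first step and with Remark~\ref{rem:stabilization-a} yields the required natural weak equivalence; naturality in $S$ follows from Propositions~\ref{prop:strict-funct} and~\ref{prop:base-change-model}.

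The main obstacle is the compatibility invoked in the second step: identifying the categorical self-equivalence $(\Sigma_T)_\ast$ on $K$-theory with the external $T$-smash on the $K$-theory spectrum regarded as a motivic spectrum. This will require an enriched additivity statement saying that the Waldhausen $K$-theory of a $\SymSp_T(\M(S))$-model category is a module over $\AAA(\SymSp_T(\M(S))^\g)$, inside which $T$ is invertible so that tensoring with $T$ is automatically a weak equivalence.
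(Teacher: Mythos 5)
Your proposal targets the wrong statement. The $\Omega_T$ in Corollary~\ref{cor:A-t-loop} is the \emph{external} $T$-loops of $\AAA^\g$ viewed as a presheaf on $\Sm_S$, not an internal smash with $T$ inside the Waldhausen category. Since $T$ is (a cofibrant model for) the cofiber of $(\Aff^1_S\minus\{0\})_+\hookrightarrow (\Aff^1_S)_+$, the enriched Yoneda lemma identifies $\Omega_T\AAA^\g(S)$ with the homotopy fiber of $j^\ast\colon \AAA^\g(\Aff^1_S)\rightarrow \AAA^\g(\Aff^1_S\minus\{0\})$, and Proposition~\ref{prop:open-split}, applied to the open embedding $j$ with reduced closed complement the zero section $i\colon S\hookrightarrow \Aff^1_S$, splits this fiber naturally as $\AAA^\g(S)$. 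That two-line argument is the paper's proof, and it works for all four finiteness notions. Your route instead tries to invert the \emph{internal} operation $-\smash T$ on $K$-theory and then transport this to the external $T$-loops via an ``enriched additivity'' module structure of motivic spectra over $\AAA(\SymSp_T^\g(\M(S)))$. That bridge is exactly the content you would need, it is nowhere in the paper, and you yourself defer it as ``the main obstacle''; without it the argument does not establish the corollary.

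Moreover, your first step is itself unjustified: the proof of Theorem~\ref{thm:stabilization-A} does not ``go through with $T$ replacing $S^1$''. Its very first move uses Waldhausen's additivity theorem and the cone sequence $\Id\rightarrow \mathrm{Cone}\rightarrow \Sigma_{S^1}$ to conclude that $-\smash S^1$ is a $K$-theory equivalence; for $T=S^1\smash S^{1,1}$ the analogous cofiber sequence $(\GG_m)_+\rightarrow (\Aff^1)_+\rightarrow T$ only yields $[-\smash T]=[\Id]-[-\smash(\GG_m)_+]$ on $K$-theory, which is not an equivalence for any visible reason. On $\pi_0$ it is multiplication by $\langle\PP^1,\infty\rangle$, and the paper treats the invertibility of this class as a genuine passage: $\AAA_T^\g$ is introduced as a separate object, Proposition~\ref{prop:inverting-t} only asserts surjectivity of $\pi_0\AAA^\g(F)[\langle\PP^1,\infty\rangle^{-1}]\rightarrow\pi_0\AAA_T^\g(F)$, and the discussion around Borisov's theorem emphasizes the loss of information in inverting $\LL$. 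If $T$-stabilization were automatically a $K$-theory equivalence as you claim, $\AAA$ and $\AAA_T$ would be conflated, which the paper carefully avoids. So both halves of your plan have gaps, while the intended proof needs only Yoneda plus the localization splitting already established.
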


\begin{proof}
  This follows from the Yoneda lemma and
  Proposition~\ref{prop:open-split}.
\end{proof}

\begin{corollary}\label{cor:Nisnevich-descent}
  Let $\g\in \{\f,\hf,\ifin,\ihf\}$. Assume that the pullback square
  \begin{equation}
    \label{eq:distinguished-square}
    \xymatrix{
      V \ar[r] \ar[d] & X \ar[d]^-p\\
      U \ar[r]^-j & Y
    }
  \end{equation}
  in $\Sm_S$ 
  is either a Nisnevich distinguished square 
  or an abstract blow-up square.
  Then the square 
  \begin{equation}
    \label{eq:A-distinguished-square}
    \xymatrix{
      \AAA^\g(Y) \ar[r]^{j^\ast} \ar[d]_{p^\ast}& \AAA^\g(U) \ar[d]\\
      \AAA^\g(X) \ar[r] & \AAA^\g(V)
    }
  \end{equation}
  is a homotopy pullback square. 
\end{corollary}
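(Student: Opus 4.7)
The plan is to apply Proposition~\ref{prop:open-split} to both horizontal arrows in~(\ref{eq:A-distinguished-square}) and reduce the homotopy pullback claim to showing the induced map on homotopy fibers is an equivalence.

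First I would set up the relevant complements. Let $i\colon Z\hookrightarrow Y$ denote the reduced closed complement of the open embedding $j\colon U\hookrightarrow Y$, and let $i'\colon Z'\hookrightarrow X$ denote the reduced closed complement of the open embedding $V\hookrightarrow X$. The crucial geometric input, which holds by definition both for Nisnevich elementary distinguished squares (the étale map $p$ is an isomorphism over the closed complement) and for abstract blow-up squares (proper birational, isomorphism on complements), is that $p$ restricts to an isomorphism $q\colon Z'\xrightarrow{\cong} Z$ of reduced base schemes. This is where the hypothesis on~(\ref{eq:distinguished-square}) enters the argument in a uniform way.

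Next, Proposition~\ref{prop:open-split} identifies the homotopy fibers of the horizontal maps in~(\ref{eq:A-distinguished-square}) with $\AAA^\g(Z)$ and $\AAA^\g(Z')$ via the functors $i^\ast$ and $i'^\ast$, respectively; the horizontal maps themselves in fact split off $\AAA^\g(U)$ and $\AAA^\g(V)$ as direct factors. The vertical map $p^\ast$ induces a morphism of the two split fiber sequences, so it suffices to check that the induced map on fibers $\AAA^\g(Z)\to \AAA^\g(Z')$ is an equivalence of spectra.

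To identify this map, I would use the equality $p\circ i' = i\circ q$ of morphisms of base schemes, together with the strict functoriality of $(-)^\ast$ provided by Proposition~\ref{prop:strict-funct}, to obtain $i'^\ast\circ p^\ast = q^\ast\circ i^\ast$. Under the identifications of the preceding paragraph this exhibits the induced map on fibers as the pullback $q^\ast\colon \AAA^\g(Z)\to\AAA^\g(Z')$, which is an equivalence (indeed an isomorphism of Waldhausen categories) because $q$ is an isomorphism of base schemes. The conclusion then follows from the standard fact that a morphism of fiber sequences inducing an equivalence on fibers yields a homotopy pullback on the remaining square.

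The main obstacle is purely bookkeeping: one must check that the fiber identifications from Proposition~\ref{prop:open-split} are sufficiently natural in the base scheme that $p^\ast$ truly induces $q^\ast$, which in turn rests on the naturality statement $j_X^\ast\circ p^\ast = p|_V^\ast\circ j^\ast$ and on the corresponding statement for the closed embeddings. Both follow from Proposition~\ref{prop:strict-funct}, so no genuinely new ideas are required once the uniform geometric input $Z'\cong Z$ has been isolated.
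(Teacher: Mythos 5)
Your overall strategy --- split both rows of~(\ref{eq:A-distinguished-square}) via Proposition~\ref{prop:open-split}, identify the homotopy fibers of the horizontal maps, and use the strict functoriality of Proposition~\ref{prop:strict-funct} to recognize the induced map on fibers as pullback along an isomorphism of schemes --- is exactly the ``straightforward consequence'' the paper intends, and your treatment of the Nisnevich case is correct.

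The abstract blow-up case, however, contains a genuine error. You keep $j\colon U\hookrightarrow Y$ an open embedding in both cases and assert that ``by definition'' $p$ restricts to an isomorphism $q\colon Z'\rightarrow Z$ between the reduced closed complements of $V\subseteq X$ and $U\subseteq Y$. For an abstract blow-up the isomorphism lives over the \emph{open} locus: $p$ is an isomorphism away from the closed center, and over the center it is typically far from one. With your orientation $Z$ is the center and $Z'$ is the exceptional locus; for the blow-up of a rational point in a smooth surface over $F$ this gives $Z=\Spec{F}$ and $Z'\cong\PP^1_F$, and the induced map on fibers $\AAA^\g(\Spec{F})\rightarrow\AAA^\g(\PP^1_F)$ is not an equivalence: under the splitting $\AAA^\g(\PP^1_F)\simeq\AAA^\g(\Aff^1_F)\times\AAA^\g(F)$ of Proposition~\ref{prop:open-split} its components are (pullback along the projection, identity), so an equivalence would force $\AAA^\g(\Aff^1_F)$ to be trivial, contradicting Proposition~\ref{prop:retract-general}. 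So the statement you prove in that case is false as you have set it up. The corollary's square~(\ref{eq:distinguished-square}) for an abstract blow-up must be read with $U$ the smooth center, $j$ its \emph{closed} immersion, and $V=p^{-1}(U)$ the exceptional fiber; then the horizontal maps of~(\ref{eq:A-distinguished-square}) are restrictions to the center and to the exceptional fiber, Proposition~\ref{prop:open-split} identifies their homotopy fibers with $\AAA^\g(Y\setminus U)$ and $\AAA^\g(X\setminus V)$ --- the open complements --- and these are the schemes that $p$ identifies; this is where ``isomorphism on complements'' actually enters. With that correction the rest of your argument (map of split fiber sequences, strict functoriality via Proposition~\ref{prop:strict-funct}, equivalence on fibers implies homotopy pullback) goes through and agrees with the paper's proof.
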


\begin{proof}
  This is a straightforward consequence of Proposition~\ref{prop:open-split}.
\end{proof}

\begin{proposition}\label{prop:a-extends}
  Let $S$ be a base scheme and $\g\in \{\f,\hf,\ifin,\ihf\}$.
  Then Waldhausen $K$-theory provides a presheaf 
  \[ \AAA^\g\colon \Sm_S^\op\rightarrow \SymSp \]
  of symmetric $S^1$-spectra which is almost sectionwise fibrant. 
\end{proposition}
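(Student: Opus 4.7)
The plan proceeds in three steps: arrange strict functoriality in $S$, upgrade Waldhausen's $\Sdot$-spectrum to a symmetric $S^1$-spectrum, and verify fibrancy at each section.

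For the strict functoriality, I would invoke the strictification of the Quillen pseudo-functor $f\mapsto f^\ast$ on $\Sm_S$ announced after Lemma~\ref{lem:base-change-finite} (via \cite{power}). Combining this with Lemma~\ref{lem:base-change-finite} and Proposition~\ref{prop:base-change-model} shows that for every $f\colon X\rightarrow Y$ in $\Sm_S$ the functor $f^\ast\colon \M(Y)\rightarrow \M(X)$ restricts to an exact functor $f^\ast\colon \M^\g(Y)\rightarrow \M^\g(X)$ of Waldhausen categories: it preserves the basepoint, cofibrations, pushouts along cofibrations (being a left adjoint), weak equivalences (being left Quillen between model categories where the relevant domains and codomains are cofibrant), and each of the four finiteness notions. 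These assemble into an honest functor $\Sm_S^\op\rightarrow (\text{Waldhausen categories})$.

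To obtain a symmetric spectrum rather than just a sequential one, I would equip the $n$-th level $\mathrm{w}\Sdot^{(n)}\M^\g(X)$ with the $\Sigma_n$-action permuting the $n$ simplicial directions of the iterated $\Sdot$-construction, and check that the structure maps
\[ S^1\smash \mathrm{w}\Sdot^{(n)}\M^\g(X) \longrightarrow \mathrm{w}\Sdot^{(n+1)}\M^\g(X) \]
are $\Sigma_n$-equivariant for the standard inclusion $\Sigma_n\hookrightarrow \Sigma_{n+1}$ fixing the new coordinate. Since the $\Sigma_n$-action is intrinsic to the $\Sdot$-machinery and does not involve $\M$ at all, every exact functor automatically induces $\Sigma_n$-equivariant maps at every level, so the resulting symmetric $S^1$-spectrum in the sense of Definition~\ref{def:b-spectrum} is functorial in the exact functors $f^\ast$, giving the desired presheaf $\AAA^\g\colon \Sm_S^\op\rightarrow \SymSp$.

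For almost sectionwise fibrancy, at each $X\in \Sm_S$ the simplicial set $\mathrm{w}\Sdot^{(n)}\M^\g(X)$ is the diagonal of the nerve of an $n$-fold simplicial category and is therefore Kan, so each level is fibrant. Waldhausen's additivity theorem \cite[Proposition 1.5.3]{Waldhausen} further shows that the adjoint structure maps
\[ \mathrm{w}\Sdot^{(n)}\M^\g(X) \longrightarrow \Omega\,\mathrm{w}\Sdot^{(n+1)}\M^\g(X) \]
are weak equivalences for $n\geq 1$, so $\AAA^\g(X)$ is a positive $\Omega$-spectrum, which is fibrant in the positive stable model structure on $\SymSp$; only the group-completion map at level zero may fail to be a weak equivalence, which is the content of the word \emph{almost}. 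The main obstacle to watch is the compatibility of the $\Sigma_n$-actions with the strictified $f^\ast$, but this is automatic because the symmetric group permutes simplicial coordinates that exact functors never touch; no new geometric input is required beyond the already-established Lemma~\ref{lem:base-change-finite} and the strictification appeal to \cite{power}.
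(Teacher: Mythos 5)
Your proposal follows essentially the route the paper leaves implicit (no proof is spelled out there): strict functoriality of $f^\ast$ on $\Sm_S$ via the strictification result of \cite{power} announced at the end of Section~\ref{sec:aff1-homotopy-theory}, exactness and preservation of the finiteness notions from Proposition~\ref{prop:base-change-model} and Lemma~\ref{lem:base-change-finite} (with Ken Brown's lemma handling weak equivalences between the cofibrant objects of $\M^\g$), the symmetric structure from the $\Sigma_n$-action permuting the $\Sdot$-directions, and the deloopings beyond the first term from Waldhausen's additivity theorem. That is exactly the intended argument, and your observation that exact functors automatically respect the permutation actions is the right reason the presheaf lands in $\SymSp$.

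One step is wrong as stated: the claim that each level $\mathrm{w}\Sdot^{(n)}\M^\g(X)$ is a Kan complex because it is the diagonal of the nerve of a multisimplicial category. The nerve of a category is Kan only when the category is a groupoid, and diagonals of multisimplicial sets are not Kan in general, so neither levelwise fibrancy nor fibrancy in the (positive) stable model structure follows. This is precisely why the statement says \emph{almost} sectionwise fibrant and why Proposition~\ref{prop:a-fibrant} later passes to a sectionwise fibrant replacement, noting it is ``fairly harmless''. The honest content of ``almost'' is that the adjoint structure maps are weak equivalences except the first, which is exactly what your appeal to Waldhausen's additivity theorem delivers; so the intended conclusion stands once the Kan/positive-fibrancy claim is dropped, but as written that claim over-reads the word ``almost''.
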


In fact,
the symmetric spectrum $\AAA^\g(X)$ is an 
$\Omega$-spectrum beyond the first term.
Corollary~\ref{cor:Nisnevich-descent} then implies that up to a sectionwise
fibrant replacement, the symmetric $S^1$-spectrum $\AAA^\g$ over $S$
is fibrant in the
Nisnevich local projective model structure. Its $\Aff^1$-fibrant
replacement can be determined fairly explicitly via Suslin's singular 
functor \cite{mv}. 
Recall the 
standard cosimplicial smooth
scheme $\Delta^\bullet_S$ over $S$ given by 
$[n]\mapsto \Delta^n_S = \Spec{\mathcal{O}_S[t_0,\dotsc,t_n]/\sum_{i=0}^n t_i =1}$.
Realizing the simplicial motivic $S^1$-spectrum $[n]\mapsto \AAA^\g(-\times_S \Delta^n)$
produces a motivic $S^1$-spectrum $\Sing \AAA^\g$ over $S$.
As Bj{\o}rn Ian Dundas pointed out to me, it is not very interesting.

\begin{proposition}\label{prop:a-fibrant}
  Let $\g\in \{\f,\hf,\ifin,\ihf\}$.
  A sectionwise fibrant replacement of the motivic $S^1$-spectrum 
  $\Sing\AAA^\g$ over $S$ is fibrant and sectionwise
  contractible.
\end{proposition}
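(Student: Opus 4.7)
The plan is to verify, for a sectionwise fibrant replacement of $\Sing\AAA^\g$, the three remaining fibrancy conditions in the $\Aff^1$-local Nisnevich model structure on $\Sp(S)$: namely, the $\Omega$-spectrum condition, Nisnevich descent, and $\Aff^1$-invariance. Each level of the simplicial spectrum $[n]\mapsto \AAA^\g(-\times_S\Delta^n_S)$ is a motivic $S^1$-spectrum on $\Sm_S$, and sectionwise fibrant replacement commutes with the geometric realization up to weak equivalence because $\M(S)$ is weakly finitely generated (Remark~\ref{r:fibrations-fibrant}), so it is enough to check the three conditions for the realization itself.

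For the $\Omega$-spectrum property, the paragraph following Proposition~\ref{prop:a-extends} records that $\AAA^\g(Y)$ is already an $\Omega$-spectrum above the first level for every $Y\in\Sm_S$; since $\Sing$ is formed by realizing a simplicial object in spectra and loop functors commute with such realizations, $\Sing\AAA^\g$ inherits this property. For the zero-th level one invokes Corollary~\ref{cor:A-t-loop}, which supplies the equivalence $\Omega_T \AAA^\g\simeq \AAA^\g$; combined with the factorization $T=S^1\smash S^{1,1}$, this yields the missing $\Omega$-structure at level zero after Nisnevich and $\Aff^1$-fibrant replacement, both of which will be verified in the remaining steps. For Nisnevich descent, Corollary~\ref{cor:Nisnevich-descent} states that $\AAA^\g$ sends Nisnevich distinguished squares to homotopy pullback squares of spectra; since $\Delta^n_S$ is smooth over $S$, pulling back a distinguished square along the projection $-\times_S\Delta^n_S\rightarrow -$ produces a distinguished square, so each simplicial degree satisfies descent, and homotopy pullbacks of spectra are preserved by the realization of simplicial spectra. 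For $\Aff^1$-invariance, one uses the standard feature of Suslin's construction: the projection $X\times \Aff^1\rightarrow X$ becomes a simplicial homotopy equivalence after pulling back along $\Delta^\bullet_S$, since the multiplication map $\Aff^1\times \Aff^1\rightarrow \Aff^1$ provides an explicit simplicial homotopy from the identity to a constant map at the level of $\Sing\AAA^\g$.

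The main obstacle is the Nisnevich descent step, where one must ensure that the interplay between sectionwise fibrant replacement, geometric realization of simplicial spectra, and the homotopy pullback squares from Corollary~\ref{cor:Nisnevich-descent} is compatible. This reduces to the fact that homotopy pullbacks of spectra commute with filtered and simplicial homotopy colimits; the hypothesis that $\M(S)$ and $\Sp(S)$ are weakly finitely generated guarantees the permanence properties of weak equivalences under filtered colimits needed to carry this out sectionwise. The $\Aff^1$-invariance step is then the classical Suslin argument and is essentially automatic, while the $\Omega$-spectrum conclusion follows formally once descent and $\Aff^1$-invariance have been established.
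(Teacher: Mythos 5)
Your skeleton matches the paper's proof: handle $\Aff^1$-invariance by the standard Suslin argument, and handle Nisnevich descent by applying Corollary~\ref{cor:Nisnevich-descent} in every simplicial degree and then passing to the realization. But the crucial step is precisely the one you assert rather than prove: that realizing the simplicial direction preserves the fibrancy conditions, i.e.\ that homotopy pullback squares (and the loop-space comparisons needed for the $\Omega$-spectrum property) commute with geometric realization. For simplicial spaces this is false in general, and the fibrancy conditions of Definition~\ref{defn:model-structure} are conditions on each individual level of the $S^1$-spectrum, so a purely spectrum-level commutation claim does not suffice as stated. The paper supplies the missing ingredient by invoking the Bousfield--Friedlander realization theorem \cite[Appendix B]{bousfield-friedlander} and verifying its hypothesis through the observation that, by construction, every pointed simplicial set occurring in $\AAA^\g(Q\times_S\Delta^n_S)$ (a nerve arising from the iterated $\Sdot$-construction) is connected, so the $\pi_\ast$-Kan condition holds and realization commutes with the relevant homotopy pullbacks and loops. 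Nothing in your proposal plays this role, and your appeal to weak finite generation and filtered-colimit permanence is beside the point: the realization is a colimit over $\Delta^{\op}$, not a filtered colimit, so those permanence properties do not address the issue.

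Two further slips. First, the $\Omega$-spectrum condition for an $S^1$-spectrum concerns $S^1$-loops, so Corollary~\ref{cor:A-t-loop} (which is about $\Omega_T$) is irrelevant to the level-zero structure map; the relevant point, as the paper notes at the outset, is that the adjoint structure maps of $\AAA^\g(X)$ are already weak equivalences except for the first, so the sectionwise fibrant replacement is harmless, and the commutation of $\Omega$ with the realization again rests on connectivity. Second, you omit the fibrancy condition that the value at $\emptyset$ be contractible, which the paper checks (trivially, since $\emptyset\times_S\Delta^n_S$ is empty). A corrected version of your route does exist: realization of simplicial spectra preserves cofiber sequences, hence stably homotopy cartesian squares, and one can then descend to the space level using the sectionwise $\Omega$-spectrum property together with connectivity of the spaces in the $\Sdot$-construction --- but that connectivity input is indispensable and is exactly what your write-up is missing.
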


\begin{proof}
  Recall first that the sectionwise fibrant replacement is fairly harmless, as
  the adjoint structure maps of $\AAA^\g(X)$ are weak equivalences, except
  for the first. The standard argument from \cite{mv} implies that
  $\Sing\AAA^\g(X\times \Aff^1_S\rightarrow X)$ is a stable equivalence. It remains
  to show that $\Sing\AAA^\g$ is still Nisnevich fibrant. Since $\emptyset
  \times_S\Delta^n_S$ is the empty scheme, $\Sing\AAA^\g(\emptyset)$
  is the realization of a degreewise contractible spectrum, hence contractible.
  The value of $\Sing\AAA^\g$ at a 
  distinguished square $Q$ as displayed in~(\ref{eq:distinguished-square})
  is a homotopy pullback
  square by \cite[Appendix B]{bousfield-friedlander}. In fact, 
  Corollary~\ref{cor:Nisnevich-descent} implies that in every simplicial degree
  $\AAA^\g(Q\times_S \Delta^n_S)$ is a homotopy pullback square. To apply
  \cite[Appendix B]{bousfield-friedlander}, it remains
  to observe that, by construction, every pointed simplicial set
  occurring in $\AAA^\g(Q\times_S \Delta^n_S)$ is connected. 

  For the final statement, observe that for every closed embedding
  $i\colon Z\hookrightarrow X$ in $\Sm_X$, with open complement
  $j\colon U=X\minus Z \hookrightarrow X$, the induced map
  \[ (i^\ast,j^\ast)\colon \Sing\AAA^g(X)\to \Sing \AAA^\g(Z)\times \Sing \AAA^\g(U)\]
  is a weak equivalence, by realizing the weak equivalences from
  Proposition~\ref{prop:open-split}. Suppose that $B\colon \Sm_S\to \Sp$ is
  any presheaf of spectra with this property which is also $\Aff^1$-invariant.
  Then $B(X) \xrightarrow{\sim} B(\Aff^1_X)\sim B(\Aff^1_X\minus \{0\})\times B(X)$ is an equivalence, as well as the identity on the second factor.
  Hence its cofiber $B(\Aff^1_X\minus\{0\})$ is contractible. It contains
  $B(X)$ as a retract included via $1\colon X\to \Aff^1_X\minus \{0\}$.
  Thus $B(X)$ is also contractible.
\end{proof}

The contractibility of the $\Aff^1$-homotopy type associated with
the Waldhausen $K$-theory of any of the finiteness notions on motivic
homotopy theory stated
in Proposition~\ref{prop:a-fibrant} calls for an adjustment. A
solution is the finiteness notion of dualizability, as introduced
in Definition~\ref{def:dualizable}. Since dualizability results
for smooth varieties in motivic homotopy theory involve invertibility
of Thom spaces of vector bundles, passage to $T$-spectra is required.

\begin{defn}\label{def:a-t-spectra}
  Let $\g\in \{\du,\f,\hf,\ifin,\ihf\}$
  be one of the finiteness notions introduced
  above, and let $S$ be a base scheme. Set
  \[ \AAA_T^\g(S):=\AAA(\SymSp^\g_T\M(S)) \]
  the algebraic $K$-theory of the category of $\g$-finite
  symmetric $T$-spectra over $S$.
\end{defn}

It is straightforward to verify that 
$\AAA_T^\g(S)$ satisfies similar properties as $\AAA^\g(S)$.
More precisely, Proposition~\ref{prop:apoint-retract} holds as well for
$\AAA_T^\g(S)$ and any $\g\in \{\du,\f,\hf,\ifin,\ihf\}$. Furthermore,
for any $\g\in \{\f,\hf,\ifin,\ihf\}$,
Propositions~\ref{prop:open-split},~\ref{prop:a-extends}, and~\ref{prop:a-fibrant},
and Corollaries~\ref{cor:Nisnevich-descent} and~\ref{cor:A-t-loop} hold with $\AAA_T^\g$ replacing $\AAA^\g$.
However, $\AAA_T^\du$ does not lead to a contractible $\Aff^1$-homotopy type,
as Theorem~\ref{thm:s-trace} implies. Nevertheless, the global sections
of $\AAA_T^\du$ and $\AAA_T^\hf$ coincide over fields of characteristic zero.

\begin{proposition}\label{prop:finite-dualizable}
  Let $F$ be a field of characteristic zero. Then
  the Waldhausen categories
  $\SymSp_T(F)^\hf$ and $\SymSp_T(F)^\du$ coincide.
\end{proposition}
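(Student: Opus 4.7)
My plan is to verify the two inclusions $\SymSp_T(F)^\hf \subseteq \SymSp_T(F)^\du$ and $\SymSp_T(F)^\du \subseteq \SymSp_T(F)^\hf$ between these full subcategories of cofibrant objects; the subcategories of cofibrations and weak equivalences then agree by definition.

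For the first inclusion, dualizable objects form a thick subcategory of $\SH(F)$ (closed under shifts, cofiber sequences, and retracts; compare Proposition~\ref{prop:dualizable} and \cite[Theorem 0.1]{May:traces}). Since every homotopy finite $B$ is stably equivalent to a cell spectrum built by attaching finitely many cells of the form $\Fr_n^\sym (X \times \partial\Delta^k \hookrightarrow X \times \Delta^k)_+$ with $X \in \Sm_F$, and $\Fr_n^\sym$ stably corresponds to an invertible shift by $T^{-n}$, it suffices to show that $\Sigma^\infty_T X_+$ is dualizable in $\SH(F)$ for every $X \in \Sm_F$. For $X$ smooth and projective this is motivic Atiyah duality, with dual the Thom spectrum of the virtual normal bundle $-TX$; I would cite Hu or Voevodsky's Nordfjordeid lectures for this key input. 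The characteristic-zero hypothesis enters to handle the general case: Hironaka's theorem furnishes a smooth projective compactification $\bar X \supseteq X$ whose complement $D = D_1 \cup \cdots \cup D_r$ is a strict normal crossings divisor. A double induction on the dimension of $X$ and on $r$, using the motivic Gysin cofiber sequences
\[ \Sigma^\infty_T (\bar X \smallsetminus Y)_+ \rightarrow \Sigma^\infty_T \bar X_+ \rightarrow \Th(N_{Y/\bar X}) \]
for smooth closed subvarieties $Y$ of $\bar X$, expresses $\Sigma^\infty_T X_+$ by successive cofiberings against Thom spectra of normal bundles over smooth projective strata of $D$. Each such Thom spectrum is dualizable because it is a smash factor of $\Sigma^\infty_T (\mathbb{P}(N \oplus \mathcal{O}))_+$ for a projective bundle, hence lies in the thick subcategory generated by smooth projective spectra. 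Consequently $\Sigma^\infty_T X_+$ is dualizable.

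For the reverse inclusion, a cofibrant dualizable $B$ has a strong dual $B^\vee$ in $\SH(F)$ satisfying $[B, -]_{\SH(F)} \iso [\one, B^\vee \smash -]_{\SH(F)}$. Smash product with $B^\vee$ commutes with arbitrary homotopy colimits (being a left adjoint on $\SH(F)$), and $[\one, -]$ commutes with filtered homotopy colimits because the motivic sphere $\one = \Fr_0^\sym S^0$ is finite; hence $B$ is compact in $\SH(F)$. The essential image of $\SymSp_T(F)^\hf$ in $\SH(F)$ is a thick subcategory containing all compact generators $\Fr_n^\sym X_+$ with $X \in \Sm_F$, so by Neeman's thick subcategory principle it exhausts the compact objects. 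In particular $B$ is stably equivalent to a finite spectrum and, being cofibrant, lies in $\SymSp_T(F)^\hf$.

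The main obstacle is the smooth non-projective case of the first inclusion: one genuinely needs resolution of singularities (hence $\operatorname{char}(F) = 0$) to reduce to smooth projective varieties, and care is required in the inclusion-exclusion over the strata of the normal crossings divisor to ensure that each intermediate term lands in the thick subcategory generated by dualizable objects. The reverse inclusion, by contrast, is a general compactness-versus-dualizability comparison that holds in any compactly generated symmetric monoidal stable model category whose compact generators are homotopy finite.
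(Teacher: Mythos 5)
Your overall route is the paper's: both inclusions are established from the same key inputs --- Atiyah duality for smooth projective $F$-schemes, resolution of singularities in characteristic zero to reach general smooth varieties, closure of dualizable objects under cofiber sequences, retracts and weak equivalences to pass from representables to all homotopy finite spectra, and, conversely, dualizable $\Rightarrow$ compact together with the identification of the compact objects of $\SH(F)$ with the thick closure of the generators. The only real difference is that the paper cites Riou, Voevodsky and \cite[Theorem 52]{RO:mz} for the dualizability of smooth quasiprojective varieties, whereas you unpack that citation into the explicit SNC-compactification and Gysin-sequence induction; your converse direction is the paper's argument with Neeman's theorem named explicitly.

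One step fails as written, though: ``Hironaka's theorem furnishes a smooth projective compactification $\bar X\supseteq X$'' is available only for quasiprojective $X$. An open subscheme of a projective variety is quasiprojective by definition, so a non-quasiprojective smooth $F$-variety (these exist, e.g.\ Hironaka's smooth proper non-projective threefolds) admits no such compactification, while the generating cofibrations of $\M(F)$ involve arbitrary $X\in\Sm_F$. The paper closes exactly this gap with one extra sentence: every smooth $F$-variety has a Zariski open cover by smooth quasiprojective ones, and dualizability for general smooth $X$ then follows from the quasiprojective case by Mayer--Vietoris together with the thickness of dualizables --- the same mechanism you already invoke, so the repair is easy but must be made. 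Two smaller points: $\Th{N}$ is the cofiber of $\PP(N)\to\PP(N\oplus\OO)$ rather than a smash factor (harmless, since a cofiber of dualizables is dualizable), and a homotopy finite spectrum is, via Lemma~\ref{lem:fin-ifin}, only a retract of an $I_F$-finite one rather than equivalent to a finite cell spectrum (again harmless by retract-closure). Finally, in the converse direction Neeman's theorem gives that $B$ is a retract in $\SH(F)$ of a finite cell spectrum; your assertions that the essential image of $\SymSp_T(F)^\hf$ is thick (hence retract-closed) and that $B$ is ``stably equivalent to a finite spectrum'' therefore need the same care the paper exercises by retaining the word ``retract'' in its final sentence.
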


\begin{proof}
  This follows from the main result in \cite{riou}. Here are some details.
  Smooth projective schemes are dualizable over a base scheme
  \cite[Theorem A.1]{Hu:Picard}, \cite[Chapitre 4]{ayoub.operations-2}.
  If $F$ is a field of characteristic zero, resolution
  of singularities provides that then also smooth quasiprojective
  $F$-varieties are dualizable \cite{Voevodsky:open}, 
  \cite[Theorem 52]{RO:mz}. Every smooth $F$-variety admits a Zariski
  open cover by smooth quasiprojective $F$-varieties, which
  implies that smooth $F$-varieties are dualizable.
  Hence every $I_F$-finite symmetric $T$-spectrum is dualizable.
  Since the property of being dualizable is closed under retracts
  and weak equivalences, and every finite cofibrant symmetric
  $T$-spectrum is a retract of
  an $I_F$-finite symmetric $T$-spectrum by Lemma~\ref{lem:fin-ifin},
  every homotopy finite symmetric $T$-spectrum is dualizable. 
  
  Conversely, every dualizable symmetric $T$-spectrum is compact
  as an object of $\SH(F)$. It is an easy consequence of basic
  properties of dualizable objects in a symmetric monoidal 
  stable model category with a compact unit~\cite{May:traces}. 
  Any compact $T$-spectrum
  in $\SH(F)$ is a retract of a cofibrant $T$-spectrum which
  is weakly equivalent to an $I$-finite $T$-spectrum. 
\end{proof}

Proposition~\ref{prop:finite-dualizable} holds also over fields of
positive characteristic, provided that the characteristic is 
inverted. However, it does not hold for base schemes of positive
dimension. For example, the $T$-suspension spectrum of 
$\Aff^1_\CC\minus\{0\}$ is not dualizable in $\SH(\Aff^1_\CC)$, although
it is finite; see \cite[Remark 8.2]{nso.landweber} for a more general statement.

\section{Grothendieck rings}
\label{sec:grothendieck-rings}

Let $S$ be a base scheme. The {\em Grothendieck ring of\/} $S$ is the
free abelian group on isomorphism classes of finite-type $S$-schemes,
denoted $[X]$,
modulo the relations $[X] = [Z]+ [X\minus Z]$ whenever $Z$ is a closed
subscheme, and $[\emptyset]=0$. The ring structure is induced by
the product $X\times_S Y$. The ring
$K_0(\Var_S)$ is commutative and has $[S]$ as a unit.
Note that $[X]=[X_\mathrm{red}]$, where $X_\mathrm{red} \hookrightarrow X$
denotes the maximal reduced closed subscheme. Weak factorization
is used in the following main result of \cite{bittner}, which
gives a much simpler presentation of the Grothendieck ring
of fields of characteristic zero.

\begin{theorem}[F.~Heinloth n\'ee Bittner]\label{thm:bittner}
  Suppose that $F$ is a field of
  characteristic zero. Then $K_0(\Var_F)$ is generated by isomorphism
  classes of connected smooth projective $F$-schemes, modulo the relations
  $[X]-[f^{-1}(Z)] = [Y]-[Z]$ whenever $f\colon X\rightarrow Y$ is the 
  blow-up of the smooth projective variety $Y$ along the smooth
  center $Z\hookrightarrow Y$, and $[\emptyset]=0$.
\end{theorem}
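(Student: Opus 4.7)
The plan is to produce a ring isomorphism between the ring $K_0^{bl}(F)$ defined by the presentation in the statement and the usual Grothendieck ring $K_0(F)$. The forward map is essentially tautological; the content is constructing an inverse, and this is where resolution of singularities and weak factorization enter.

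First I would define the forward homomorphism $\phi\colon K_0^{bl}(F)\rightarrow K_0(F)$ by sending the generator $[X]$ to the class of $X$ in $K_0(F)$. To see this is well-defined, note that if $f\colon X\rightarrow Y$ is the blow-up of a smooth projective $Y$ along a smooth center $Z\hookrightarrow Y$, then $f$ restricts to an isomorphism $X\smallsetminus f^{-1}(Z)\xrightarrow{\iso}Y\smallsetminus Z$, so the scissor relations in $K_0(F)$ yield $[X]-[f^{-1}(Z)]=[X\smallsetminus f^{-1}(Z)]=[Y\smallsetminus Z]=[Y]-[Z]$. The product structure is obviously compatible. Next I would verify that $\phi$ is surjective by Noetherian induction on dimension: for any reduced finite-type $F$-scheme $V$, generic smoothness and scissor give $[V]=[U]+[V\smallsetminus U]$ for a smooth dense open $U\subseteq V$; Nagata compactification together with Hironaka's resolution produce a smooth projective $\bar U$ containing $U$ as the complement of a simple normal crossings divisor $D=\bigcup_i D_i$, so $[U]=[\bar U]-[D]$, and repeated application of inclusion-exclusion expresses $[D]$ in terms of classes of the smooth projective strata $D_I=\bigcap_{i\in I}D_i$ of strictly smaller dimension. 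This shows every class in $K_0(F)$ lies in the image of $\phi$.

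The heart of the proof is constructing the inverse $\psi\colon K_0(F)\rightarrow K_0^{bl}(F)$. The previous paragraph already suggests the formula: for a smooth quasiprojective $F$-variety $U$, choose a smooth projective compactification $U\hookrightarrow \bar U$ with complement a simple normal crossings divisor $D$, and set
\[ e(U):=\sum_{I}(-1)^{|I|}[D_I]\in K_0^{bl}(F),\]
where $I$ ranges over subsets of the components of $D$ (with $D_\emptyset=\bar U$). The critical step is to show that $e(U)$ is independent of the compactification chosen. Given two such $(\bar U_1,D^{(1)})$ and $(\bar U_2,D^{(2)})$, one applies the weak factorization theorem of W\l odarczyk--Abramovich--Karu--Matsuki to the birational map $\bar U_1\dashrightarrow \bar U_2$ (which is an isomorphism over $U$). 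This factors the map as a finite chain of blow-ups and blow-downs along smooth centers which lie over the boundaries and are in general position with respect to them; the defining blow-up relation in $K_0^{bl}(F)$ ensures that $e$ is preserved under each step. Once $e(U)$ is well-defined on smooth quasiprojective varieties, it extends to arbitrary finite-type $F$-schemes by stratification (noting that disjoint unions of smooth quasiprojective pieces cover any $V$ by scissor), and one checks that the resulting assignment respects the scissor relations $[V]=[Z]+[V\smallsetminus Z]$ for $Z\hookrightarrow V$ a closed subscheme (using compatibility of the formula under further blowing up to achieve simple normal crossings position of $Z$). This yields the ring homomorphism $\psi$.

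Finally I would check $\phi\circ \psi=\id$ and $\psi\circ\phi=\id$. The first is immediate from the scissor formula $[\bar U]=\sum_{I}(-1)^{|I|}[D_I]$ holding already in $K_0(F)$ applied after unwinding the definition of $e$. The second holds because, for a connected smooth projective $X$, one may take $\bar U=X$, $D=\emptyset$, giving $e(X)=[X]$ directly. The main obstacle is indisputably the well-definedness of $e$, where one must verify that each elementary move of weak factorization (a blow-up along a smooth center transverse to the boundary) leaves the sum $\sum_{I}(-1)^{|I|}[D_I]$ invariant modulo the blow-up relation; this is a combinatorial calculation identifying the strata of the new boundary in terms of the old ones, which is exactly Bittner's original argument.
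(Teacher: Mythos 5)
Your proposal is correct and follows exactly the argument the paper relies on: the paper states this result as Bittner's theorem without giving its own proof, remarking only that weak factorization is the essential input. Your sketch is precisely Bittner's original strategy --- the tautological map from the blow-up presentation, surjectivity via resolution of singularities and stratification, and the inverse given by $e(U)=\sum_I(-1)^{|I|}[D_I]$ for an SNC compactification, whose well-definedness (the genuine crux, which you correctly isolate) is checked by applying W\l odarczyk--Abramovich--Karu--Matsuki weak factorization and verifying invariance of the alternating sum of strata under each elementary blow-up with smooth center in normal crossings position with the boundary.
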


A {\em motivic measure\/} on $S$ is a ring homomorphism
\[ K_0(\Var_S)\rightarrow A\]
to some commutative ring. Main examples of motivic measures are the Euler
characteristic $K_0(\Var_\CC)\rightarrow \ZZ$ on the
complex numbers, point counting
$K_0(\Var_{\FF_q})\rightarrow \ZZ$ on a finite field, and the Gillet-Soul\'e 
motivic measure \cite{gillet-soule}.
Theorem~\ref{thm:bittner} simplifies
the construction of motivic measures. For example, the motivic measure
on fields of characteristic zero obtained by sending a smooth projective
variety to its stable birational class
constructed in \cite{larsen-lunts.motivic} can be deduced from
Theorem~\ref{thm:bittner}.
In order to provide a new motivic measure, recall that if $\mathbf{C}$ is
a Waldhausen category, the abelian group $\pi_0\AAA(\mathbf{C})$ is
generated by the objects in $\mathbf{C}$, subject to the
following two relations:
\begin{enumerate}
\item $\langle B\rangle = \langle C\rangle$ if there exists a
  weak equivalence $B\xrightarrow{\sim} C$
\item $\langle B\rangle +\langle D\rangle = \langle C\rangle$ if there exists a
  cofibration $B\hookrightarrow C$ with cofiber $D$
\end{enumerate}

\begin{theorem}\label{thm:a-motivic-measure}
  Let $F$ be a field of characteristic zero.
  Sending the isomorphism class $[X]$ 
  of a smooth projective $F$-scheme $X$ to 
  its class $\langle X_+\rangle\in \pi_0 \AAA^\ifin(F)$
  defines a surjective motivic measure
  \[ \Phi_F\colon K_0(\Var_F) \rightarrow \AAA^\ifin(F). \]
\end{theorem}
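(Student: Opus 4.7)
My strategy is to use Bittner's presentation of $K_0(F)$ (Theorem~\ref{thm:bittner}) to define $\Phi_F$ on smooth projective generators, reduce well-definedness to the empty-scheme and smooth-blow-up relations, verify multiplicativity via the smash product of motivic spaces, and then obtain surjectivity by a cellular argument that ultimately requires expressing the class of a smooth but non-projective variety in terms of smooth projective ones.

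Setting $\Phi_F([Y]):=\langle Y_+\rangle$ for connected smooth projective $Y/F$, the empty relation is trivial since $\emptyset_+=\ast$. For the blow-up relation, let $f\colon X\to Y$ be the blow-up of a smooth projective $Y$ along a smooth closed subscheme $Z$. Because $f$ is an isomorphism over $Y\minus Z$, the induced map $X_+/f^{-1}(Z)_+\to Y_+/Z_+$ of motivic cofibers is a weak equivalence in $\M(F)$; this is Morel--Voevodsky blow-up descent (the presheaf-level counterpart appears as the abstract blow-up case of Corollary~\ref{cor:Nisnevich-descent}). Consequently the square
\[\xymatrix{ f^{-1}(Z)_+ \ar[r]\ar[d] & X_+ \ar[d] \\ Z_+ \ar[r] & Y_+ }\]
is a homotopy pushout in $\M(F)$, and after cofibrant replacement Waldhausen's cofiber-sequence relation delivers $\langle X_+\rangle-\langle f^{-1}(Z)_+\rangle=\langle Y_+\rangle-\langle Z_+\rangle$ in $\pi_0\AAA^\ifin(F)$. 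The smash-product pairing of retractive objects in the last part of Proposition~\ref{prop:retractive-model} equips $\M^\ifin(F)$ with a monoidal Waldhausen structure (interpreted via the $K$-theory equivalences of Proposition~\ref{prop:finiteness} whenever the smash leaves the strict $I_F$-cellular category); since $(X\times_F Y)_+\iso X_+\smash Y_+$, Waldhausen's biexact-pairing machinery makes $\Phi_F$ multiplicative, sending the unit $[\Spec{F}]$ to the class of the monoidal unit $S^0\in\M(F)$.

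For surjectivity, any class in $\pi_0\AAA^\ifin(F)$ is $\langle B\rangle$ for some $B\in\M^\ifin(F)$, built from $\ast$ by attaching finitely many cells from $I_F$. The successive cofibers of this filtration are of the form $(X_i)_+\smash(\Delta^{n_i}/\partial\Delta^{n_i})=(X_i)_+\smash S^{n_i}$ for some $X_i\in\Sm_F$, and the contractible-cone cofibration $Y\hookrightarrow CY$ gives $\langle\Sigma Y\rangle=-\langle Y\rangle$. Iterating Waldhausen's cofiber-sequence relation therefore reduces surjectivity to showing $\langle X_+\rangle\in\mathrm{image}(\Phi_F)$ for every smooth, not necessarily projective, $X/F$. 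This last reduction is the technical heart. Nagata compactification and Hironaka resolution (available since $\mathrm{char}\, F=0$) furnish a smooth projective compactification $X\subset\bar X$ with complementary divisor $D=D_1\union\dotsb\union D_r$ of simple normal crossings whose strata $D_I:=\bigcap_{i\in I}D_i$ are all smooth and projective. I would prove by induction on $(\dim X,r)$ the inclusion--exclusion formula
\[\langle X_+\rangle=\sum_{I\subseteq\{1,\dotsc,r\}}(-1)^{|I|}\langle (D_I)_+\rangle,\]
with $D_\emptyset:=\bar X$. The inductive step uses the Morel--Voevodsky purity cofiber sequence $U_+\to W_+\to\Th{N_{Z/W}}$ for a smooth closed $Z\hookrightarrow W$ combined with $\Th{N}=\PP(N\directsum\one)/\PP(N)$ to rewrite each Thom-space class as $\langle\PP(N\directsum\one)_+\rangle-\langle\PP(N)_+\rangle$; these projective-bundle classes are then handled by lower-dimensional resolution together with the blow-up relation already verified. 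Once the displayed formula is in hand, its right-hand side lies in the image of $\Phi_F$ by construction and, by Bittner's theorem, equals $\Phi_F([X])$, completing the proof.
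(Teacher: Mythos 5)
Your overall route is the same as the paper's (Bittner's presentation plus blow-up descent for well-definedness, multiplicativity via the smash product, and a cell-filtration argument reducing the generators of $\pi_0\AAA^\ifin(F)$ to suspension spectra of smooth varieties, followed by resolution of singularities), but the final surjectivity step contains a genuine error. The displayed inclusion--exclusion formula
$\langle X_+\rangle=\sum_{I}(-1)^{|I|}\langle (D_I)_+\rangle$ is false, and so is your closing claim that its right-hand side equals $\Phi_F([X])=\langle X_+\rangle$. Take $X=\Aff^1$, $\bar X=\PP^1$, $D=\{\infty\}$: then $\langle \Aff^1_+\rangle=\langle \Spec{F}_+\rangle$ (since $\Aff^1_+\simeq S^0$), whereas $\langle\PP^1_+\rangle-\langle \Spec{F}_+\rangle=\langle\PP^1,\infty\rangle$, and these classes differ (this is exactly the remark the paper makes immediately after the theorem, detected by real realization with its conjugation action). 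The reason your induction cannot produce that formula is that open complements in $\Aff^1$-homotopy theory are governed by the purity cofiber sequence $U_+\to W_+\to \Th{N_{Z/W}}$, in which the closed stratum enters through its \emph{Thom space}, i.e.\ with a Tate twist, not through $Z_+$ itself as in the scissors relation defining $K_0(F)$. Conflating the two is precisely what the formula does.

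The fix is to weaken the claim: you only need $\langle X_+\rangle\in\operatorname{im}(\Phi_F)$, not an identification with $\Phi_F([X])$. Purity together with $\Th{N}\simeq\PP(N\oplus\OO)/\PP(N)$ gives
$\langle U_+\rangle=\langle W_+\rangle-\langle\PP(N\oplus\OO)_+\rangle+\langle\PP(N)_+\rangle$
for a smooth closed $Z\subset W$ with open complement $U$, and an induction on dimension (removing the boundary divisors of a smooth projective compactification one at a time, handling the resulting projective bundles over lower-dimensional smooth bases by a further application of resolution and the same sequence) shows that $\langle X_+\rangle$ lies in the subring generated by classes of smooth projective varieties --- which is the image of $\Phi_F$. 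This is exactly the argument the paper invokes by reference to the proof of Proposition~\ref{prop:finite-dualizable} (Riou's dualizability argument); carried out correctly it completes your proof, but it yields membership in the image rather than the erroneous equality $\langle X_+\rangle=\Phi_F([X])$. The remaining minor points (every class in $\pi_0$ is only a $\ZZ$-linear combination of object classes, and one should check that cones and the blow-up square can be realized by cofibrations inside the $I$-finite Waldhausen category) are easily repaired and are also glossed in the paper.
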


\begin{proof}
  The relations given in Theorem~\ref{thm:bittner} 
  are fulfilled in $\pi_0 \AAA^\ifin(F)$ by
  \cite[Remark 3.2.30]{mv}. Hence $[X]\mapsto \langle X_+\rangle$
  defines a group homomorphism
  \[ \Phi_F\colon K_0(\Var_F) \rightarrow \pi_0\AAA^\ifin(F), \]
  which is compatible with the multiplicative structure.
  It remains to prove its surjectivity. However, $\pi_0\AAA^\ifin(F)$
  is generated as an abelian group by $I$-finite $S^1$-spectra 
  over $F$, and hence by the domains and codomains of the
  maps in $I$. These are of the form 
  $\Fr_mX_+\smash \partial \Delta^n_+$ and $\Fr_mX_+\smash \Delta^n_+$, 
  where
  $X$ is a smooth $F$-variety and $m,n\in \NN$. 
  Since $\Fr_m$ corresponds to a simplicial desuspension and
  suspension induces multiplication by $-1$ on $\pi_0\AAA^\ifin(F)$,
  one may restrict to $m=0$. Induction on $n$ and the cofiber sequence
  \[ X_+\smash \partial \Delta^n_+\hookrightarrow X_+\smash \Delta^n_+\rightarrow X_+\smash S^n \]
  imply that $\pi_0\AAA^\ifin(F)$ is generated as an abelian group by 
  $S^1$-suspension spectra $\Fr_0X_+ = \Sigma^\infty X_+$
  of smooth $F$-varieties.
  Resolution of singularities implies that it suffices to 
  restrict to $S^1$-suspension spectra of
  smooth projective $F$-varieties, similar to the argument
  in the proof of Proposition~\ref{prop:finite-dualizable}. 
  This concludes the proof.
\end{proof}

The formula 
$\Phi_F([X]) =\langle X_+\rangle$ does not apply to non-projective
varieties in general. For example,
\[ \Phi_F([\Aff^1]) = \Phi_F([\PP^1]-[\Spec{F}])=\langle \PP^1_+\rangle-\langle\Spec{F}_+\rangle = 
   \langle\PP^1,\infty \rangle \neq \langle\Aff^1_+\rangle = \langle\Spec{F}_+\rangle, \]
where the inequality follows -- at least in the case $k\subseteq \mathbb{R}$ --
from the left Quillen functor which takes a smooth $k$-scheme to the
topological space of its complex points, together with the
conjugation action. The fixed points of the action on the left hand side
of the inequality yield the class of $\mathbb{RP}^1$ having reduced Euler
characteristic $-1$, while the fixed points of the action on the right hand side
of the inequality have reduced Euler characteristic $1$.

Since $I$-finiteness is the smallest of the finiteness notions
$\g\in \{\f,\hf,\ifin,\ihf\}$
considered on motivic spaces over a field,
there is a motivic measure 
\[ \Phi_F\colon K_0(\Var_F) \rightarrow \pi_0\AAA^\g(F) \]
as well; however, it may not be surjective in the case $\g\in \{\f,\hf\}$.
Proposition~\ref{prop:apoint-retract} shows that it refines
the Euler characteristic if $F$ is a subfield of $\CC$. It
also refines the Gillet-Soul\'e ``motivic'' motivic measure \cite{gillet-soule}.

\begin{proposition}\label{prop:gillet-soule}
  Let $F$ be a field of characteristic zero.
  There is a commutative diagram
  \[\xymatrix{
    K_0(\Var_F) \ar[r]^{\Phi_F} \ar[d]_{\Psi_F} & \pi_0 \AAA^\ifin(F) \ar[d]\\
    K_0(\mathrm{ChMot}^{\mathrm{eff}}_F) \ar[r]^\iso & 
    K_0(\mathrm{DM}^{\mathrm{eff},\hf}_F)}\]
  of ring homomorphisms, where $\Psi_F$ maps
  the class of a smooth projective $F$-scheme to the class of
  its effective Chow motive.
\end{proposition}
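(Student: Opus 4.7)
My plan is to define the right-hand vertical map via the motivic complex functor, identify the bottom isomorphism as a $K_0$-consequence of Voevodsky's embedding of Chow motives into $\DM^{\mathrm{eff}}_F$, and then verify commutativity on the smooth projective generators supplied by Theorem~\ref{thm:bittner}.

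For the right vertical arrow, the assignment $X\mapsto M(X)$ sending a smooth $F$-variety to its motivic complex (the Suslin complex of the representable sheaf with transfers) extends by Kan extension and stabilization to an exact functor from a Waldhausen model of $\Sp^{\ifin}(F)$ to $\DM^{\mathrm{eff},\hf}_F$. Nisnevich distinguished squares are taken to distinguished triangles, $\Aff^1$-projections become isomorphisms, and $I$-finite spectra land among the compact objects because motives of smooth varieties are compact. This induces the required map on $\pi_0\AAA^{\ifin}(F)$, sending $\langle X_+\rangle$ to the class $[M(X)]$. For the bottom isomorphism I would invoke Voevodsky's full embedding $\mathrm{ChMot}^{\mathrm{eff}}_F \hookrightarrow \DM^{\mathrm{eff}}_F$, $\mathfrak{h}(X)\mapsto M(X)$; in characteristic zero, resolution of singularities together with the Mayer--Vietoris triangles attached to blow-up squares shows that Chow motives thickly generate $\DM^{\mathrm{eff},\hf}_F$. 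I would then apply Bondarko's Chow weight structure on $\DM^{\mathrm{eff},\hf}_F$, whose heart is exactly $\mathrm{ChMot}^{\mathrm{eff}}_F$, together with the general fact that the inclusion of the heart of a bounded weight structure induces an isomorphism on $K_0$.

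Commutativity is then checked on generators. For $X$ smooth projective, the upper route sends $[X]\mapsto \langle X_+\rangle\mapsto [M(X)]$, while the lower route sends $[X]\mapsto [\mathfrak{h}(X)]\mapsto [M(X)]$. Since both $\Phi_F$ and $\Psi_F$ are already shown to respect the blow-up relations of Theorem~\ref{thm:bittner}, pointwise agreement on smooth projective generators is enough. The main obstacle I anticipate is the injectivity half of the bottom isomorphism: even granted that $\mathrm{ChMot}^{\mathrm{eff}}_F$ generates $\DM^{\mathrm{eff},\hf}_F$ as a thick triangulated subcategory, one still has to check that the additional $K_0$-relations coming from distinguished triangles in $\DM^{\mathrm{eff},\hf}_F$ collapse to the additive relations among pure Chow motives, and it is precisely Bondarko's weight-structure argument that supplies this control.
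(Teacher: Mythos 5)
Your proposal follows essentially the same route as the paper: the right-hand vertical arrow is the ``add transfers'' functor (on representables this is exactly your motivic complex $X\mapsto M(X)$), which the paper realizes via the model of $\DM^{\mathrm{eff}}_F$ as $S^1$-spectra of motivic spaces with transfers, and the bottom isomorphism is exactly Bondarko's weight-complex/weight-structure identification of $K_0(\mathrm{ChMot}^{\mathrm{eff}}_F)$ with $K_0$ of geometric motives, combined (as in the paper, via the argument of Proposition~\ref{prop:finite-dualizable} and Cisinski--D\'eglise) with the fact that homotopy finite, compact, and geometric motives coincide. Your explicit verification of commutativity on Bittner's smooth projective generators just spells out what the paper leaves implicit.
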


\begin{proof}
  Voevodsky's derived category of effective motives
  may be obtained as the homotopy category of 
  $S^1$-spectra of motivic spaces with transfers; see \cite{RO:mz}. 
  This implies an identification of
  $K_0(\mathrm{DM}_F^{\mathrm{eff},\hf})$ with the ring of path components
  of $\AAA(\M^{\mathrm{tr},\hf}(F))$, where 
  $\M^{\mathrm{tr}}(F)$ is the model category
  of motivic spaces with transfers defined via the functor
  $\M^{\mathrm{tr}}(F)\rightarrow \M(F)$ forgetting transfers.
  Its left adjoint induces the vertical arrow on the
  right hand side of the diagram displayed above.
  Similar to the argument in the proof of 
  Proposition~\ref{prop:finite-dualizable} is an
  argument proving that homotopy finite and compact
  motives coincide in $\mathrm{DM}_F^{\mathrm{eff}}$.
  Furthermore, compact objects and
  geometric motives coincide by \cite[Theorem 11.1.13]{cisinski-deglise.dm},
  both in the effective and the non-effective case.
  The lower horizontal morphism in Proposition~\ref{prop:gillet-soule}
  is thus an isomorphism by \cite[Theorem 6.4.2]{bondarko.voevodsky-hanamura}.
\end{proof}

\begin{theorem}\label{thm:a-motivic-measure-2}
  Let $F$ be a field of characteristic zero.
  The ring homomorphism $\Phi_F$ extends to a 
  surjective ring homomorphism
  \[ \Phi_F\colon K_0(\Var_F)[\mathbb{L}^{-1}] \rightarrow \pi_0 \AAA_T^\ifin(F), \]
  where $\mathbb{L} = [\Aff^1_S]$.
\end{theorem}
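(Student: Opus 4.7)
The plan is to exhibit $\Phi_F$ on $K_0(F)$, check that it sends $\mathbb{L}$ to a unit of the target, invoke the universal property of localization, and then prove surjectivity by an explicit generation argument. For the first step, the Bittner presentation (Theorem~\ref{thm:bittner}) reduces the existence of a ring homomorphism
\[ \Phi_F\colon K_0(F)\to \pi_0\AAA_T^\ifin(F),\qquad [X]\mapsto \langle \Sigma^\infty_T X_+\rangle, \]
to checking the blow-up and excision relations. These hold by exactly the argument of Theorem~\ref{thm:a-motivic-measure}: the blow-up cofiber sequence of \cite[Remark 3.2.30]{mv} lives in $I_F$-finite pointed motivic spaces and is preserved by $\Sigma^\infty_T$.

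For the second step, the motivic weak equivalence $(\mathbb{P}^1_F,\infty)\simeq T$ splits $\Sigma^\infty_T(\mathbb{P}^1_F)_+\simeq \one\vee \Sigma^\infty_T T$, so
\[ \Phi_F(\mathbb{L}) = \Phi_F\bigl([\mathbb{P}^1]-[\Spec{F}]\bigr) = \langle \Sigma^\infty_T T\rangle. \]
To show this class is a unit of $\pi_0\AAA_T^\ifin(F)$, I would use the explicit generators from Proposition~\ref{prop:symm-spectra}: the shifted free symmetric $T$-spectrum $\Fr_1^\sym S^0$ is cofibrant and $I_F$-finite, and the stabilization map $\Fr_1^\sym T \to \Fr_0^\sym S^0 = \one$ is among the maps at which the $T$-stable structure is localized, hence a weak equivalence. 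Since the pairing $\Fr_m^\sym X\smash \Fr_n^\sym Y\iso \Fr_{m+n}^\sym(X\smash Y)$ makes the $I_F$-finite symmetric $T$-spectra a symmetric monoidal Waldhausen category, passing to $\pi_0$ of $K$-theory yields
\[ \langle \Fr_1^\sym S^0\rangle\cdot \langle \Sigma^\infty_T T\rangle = \langle \Fr_1^\sym T\rangle = 1. \]
The universal property of localization then produces the extended ring homomorphism $\Phi_F\colon K_0(F)[\mathbb{L}^{-1}]\to \pi_0\AAA_T^\ifin(F)$.

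Surjectivity follows by rerunning the generation argument of Theorem~\ref{thm:a-motivic-measure} in the $T$-stable setting. The abelian group $\pi_0\AAA_T^\ifin(F)$ is generated by the classes of the domains and codomains of $\Fr^\sym I_F$, i.e.\ by $\Fr_m^\sym X_+\smash K$ for $X$ smooth and $K\in\{\partial\Delta^n_+,\Delta^n_+\}$. The cofiber sequence $X_+\smash \partial\Delta^n_+\hookrightarrow X_+\smash \Delta^n_+\to X_+\smash S^n$ together with the fact that $S^1$-suspension acts as $-1$ on $\pi_0$ reduces to the classes $\langle \Fr_m^\sym X_+\rangle$, and resolution of singularities further reduces $X$ to the smooth projective case. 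Iterating the stable equivalence $\Fr_m^\sym X_+\smash T\simeq \Fr_{m-1}^\sym X_+$ finally gives
\[ \langle \Fr_m^\sym X_+\rangle = \Phi_F([X])\cdot \Phi_F(\mathbb{L})^{-m} = \Phi_F\bigl([X]\cdot \mathbb{L}^{-m}\bigr), \]
exhibiting every generator in the image of $\Phi_F$.

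The hard part is the middle step: one must verify rigorously that $\Fr_1^\sym T\to \one$ is a weak equivalence of cofibrant $I_F$-finite symmetric $T$-spectra (not merely a stable equivalence after some fibrant replacement), and that the smash pairing genuinely descends to a commutative ring structure on $\pi_0\AAA_T^\ifin(F)$ in which $\langle \Fr_1^\sym S^0\rangle$ is an honest two-sided inverse of $\langle \Sigma^\infty_T T\rangle$. Both points are bookkeeping with the explicit description in Proposition~\ref{prop:symm-spectra}, but they carry the substantive weight of the argument.
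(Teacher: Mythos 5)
Your proposal is correct and takes essentially the same route as the paper: existence over $K_0(F)$ via Bittner's presentation is Theorem~\ref{thm:a-motivic-measure}, invertibility of $\Phi_F(\LL)=\langle(\PP^1,\infty)\rangle$ is the paper's appeal to the invertibility of $\Sigma_T^\infty(\PP^1,\infty)$ in $\SH(F)$ (your $\Fr_1^\sym S^0$ computation merely makes explicit that the inverse is realized by an $I$-finite symmetric $T$-spectrum, so invertibility descends to $\pi_0\AAA_T^\ifin(F)$), and your surjectivity argument via generators $\Fr_m^\sym X_+$, $T$-desuspension, and resolution of singularities is exactly the one the paper uses in Theorem~\ref{thm:a-motivic-measure} and Proposition~\ref{prop:inverting-t}. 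The only difference is that you spell out details the paper obtains by citation.
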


\begin{proof}
  This follows from Theorem~\ref{thm:a-motivic-measure},
  the equality $\Phi_F(\mathbb{L}) = \langle \PP^1,\infty \rangle$,
  and the fact that $\Sigma_T^\infty(\PP^1,\infty)$ is invertible
  in $\SH(F)$, hence also in $\pi_0 \AAA_T^\ifin(F)$.
\end{proof}

In particular, all relations that hold in the Grothendieck ring
$K_0(\Var_F)$ or its localization $K_0(\Var_F)[\mathbb{L}^{-1}]$ also hold
in $\pi_0 \AAA(F)$ or $\pi_0\AAA_T(F)$, respectively. The ring
$K_0(\Var_F)[\mathbb{L}^{-1}]$ is relevant to the theory of motivic integration,
and also to the construction of the duality involution induced
by $[X]\mapsto \LL^{-\dim{X}}[X]$. In the ring $\pi_0\AAA_T^\g(F)$
(which from a certain perspective still consists of algebro-geometric 
objects),
the class of the pointed projective line is naturally invertible. 
Also the duality involution has a natural interpretation
in $\pi_0\AAA_T^\g(F)$, since the dual of a smooth projective $F$-variety $X$
is the Thom $T$-spectrum of its negative tangent bundle, considered as a class in $K_0(X)$. The
equality
\[ \langle \Th{-\Tan{X}}\rangle = \langle \PP^1,\infty\rangle^{-\dim{X}}
\cdot \langle X_+\rangle \]
follows from the Zariski local triviality of vector bundles.
However, the localization passage $K_0(\Var_F)\rightarrow K_0(\Var_F)[\LL^{-1}]$ involves
a loss of information.
Lev Borisov proved that $\LL$ is a zero divisor in $K_0(\Var_F)$ \cite{borisov}.
In particular, the composition
\[ K_0(\Var_F) \rightarrow \pi_0 \AAA^\ifin(F) \rightarrow \pi_0\AAA_T^\ifin(F) \]
is not injective.

\begin{proposition}\label{prop:inverting-t}
  The canonical homomorphism
  $\pi_0 \AAA^\g(F) \rightarrow \pi_0\AAA_T^\g(F)$ induces a 
  surjective homomorphism
  $\pi_0\AAA^\g(F)[\langle \PP^1,\infty \rangle^{-1}]\rightarrow \pi_0\AAA_T^\g(F)$
  for $\g\in \{\ifin,\f,\ihf,\hf\}$. 
\end{proposition}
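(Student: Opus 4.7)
The plan is to identify the canonical map in question as induced by the Quillen functor $\Sigma_T^\infty\colon\M(F)\rightarrow\SymSp_T(F)$, check that $\langle \PP^1,\infty\rangle$ becomes invertible on the target, and then prove surjectivity by a generator reduction parallel to the one used in Theorem~\ref{thm:a-motivic-measure}.

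First, using Remark~\ref{rem:stabilization-a} to identify $\pi_0\AAA^\g(F)$ with $\pi_0\AAA(\M^\g(F))$, the canonical map arises from $\Sigma_T^\infty$. By Proposition~\ref{prop:symm-spectra} this functor is left Quillen, and it preserves $\g$-finiteness for each of the four finiteness notions: being a left adjoint it commutes with colimits and hence preserves finitely presentable cofibrant objects, and it carries the generating set $I_F$ of Definition~\ref{def:I-S} into $\Fr^\sym I_F$, so it preserves $I$-finiteness and $I$-homotopy finiteness exactly as in Lemma~\ref{lem:base-change-finite}. It therefore induces an exact functor of Waldhausen categories and the required ring map on $\pi_0$. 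Second, $\Sigma_T^\infty(\PP^1,\infty)$ is weakly equivalent to $\Fr_0^\sym T$, which is invertible in $\SH(F)$ with inverse representable by the $\ifin$-finite symmetric $T$-spectrum $\Fr_1^\sym S^0$. Hence $\langle \PP^1,\infty\rangle$ is already invertible in $\pi_0\AAA_T^\ifin(F)$, and a fortiori in $\pi_0\AAA_T^\g(F)$, so the map extends over the localization.

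For surjectivity, I argue as in the proof of Theorem~\ref{thm:a-motivic-measure} that $\pi_0\AAA_T^\g(F)$ is generated as an abelian group by the classes of domains and codomains in $\Fr^\sym I_F$. A cofiber-sequence induction collapses the simplicial factors, leaving classes $\langle\Fr_m^\sym X_+\rangle$ for $m\geq 0$ and $X$ a smooth $F$-variety (and for $\g=\ifin$ further reducing to smooth projective $X$ via resolution of singularities, though that refinement is not needed here). In the stable model structure on $\SymSp_T(F)$, the map $\Fr_{m+1}^\sym(C\smash T)\rightarrow \Fr_m^\sym C$ is a weak equivalence by construction (Proposition~\ref{prop:symm-spectra}), and the standard identity $\Fr_{m+1}^\sym(C)\smash T\iso \Fr_{m+1}^\sym(C\smash T)$ --- a consequence of the adjoint characterization of $\Fr_\bullet^\sym$ and the isomorphism $\Fr_m^\sym A\smash \Fr_n^\sym B\iso \Fr_{m+n}^\sym(A\smash B)$ for free symmetric spectra --- yields
\[ \langle \PP^1,\infty\rangle\cdot \langle\Fr_{m+1}^\sym C\rangle = \langle\Fr_m^\sym C\rangle \]
in $\pi_0\AAA_T^\g(F)$. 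Iterating gives $\langle\Sigma_T^\infty X_+\rangle=\langle\PP^1,\infty\rangle^m\cdot\langle\Fr_m^\sym X_+\rangle$, so in the localized ring the generator $\langle\Fr_m^\sym X_+\rangle$ equals $\langle\PP^1,\infty\rangle^{-m}\langle\Sigma_T^\infty X_+\rangle$, and $\langle\Sigma_T^\infty X_+\rangle$ is the image of $\langle X_+\rangle\in\pi_0\AAA^\g(F)$; surjectivity follows.

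The main delicacy will be the identification $\Fr_{m+1}^\sym(C)\smash T\iso \Fr_{m+1}^\sym(C\smash T)$: although formal from the adjoint characterization of $\Fr_{m+1}^\sym$ and the multiplicative structure on the free-spectrum functors, one must keep careful track of the regular $\Sigma_{m+1}$-action. Everything else is a routine adaptation of the generator reduction already performed in the proof of Theorem~\ref{thm:a-motivic-measure}.
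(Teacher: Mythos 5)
Your proposal follows essentially the same route as the paper's proof: reduce the generators of $\pi_0\AAA_T^\g(F)$ to shifted suspension spectra $\Fr_m^\sym X_+$ of smooth $F$-schemes (discarding the simplicial direction exactly as in the proof of Theorem~\ref{thm:a-motivic-measure}) and then use the stable equivalence $\Fr_m^\sym X_+\smash T^{\smash m}\simeq \Sigma_T^\infty X_+$, which is precisely your iterated identity $\langle\PP^1,\infty\rangle\cdot\langle\Fr_{m+1}^\sym C\rangle=\langle\Fr_m^\sym C\rangle$. The only difference is that you make explicit what the paper leaves implicit, namely the invertibility of $\langle\PP^1,\infty\rangle$ in the target and the free-spectrum isomorphism $\Fr_{m+1}^\sym(C)\smash T\iso\Fr_{m+1}^\sym(C\smash T)$.
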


\begin{proof}
  In fact, this holds for any
  base scheme $S$. 
  The abelian group $\pi_0\AAA_T^\g(F)$
  is generated by shifted $T$-suspension spectra $\Fr_m X_+$ 
  of smooth $S$-schemes; the
  contribution from the simplicial direction can be ignored, as the
  proof of Theorem~\ref{thm:a-motivic-measure} implies. The symmetric
  $T$-spectrum $\Fr_m X^+ \smash T^m$ is stably equivalent to
  $\Fr_0 X_+ = \Sigma^\infty_T X_+$, showing that 
  \[\pi_0 \AAA^\g(S)[\langle \PP^1,\infty \rangle^{-1}] \rightarrow \pi_0\AAA_T^\g(S) \]
  is surjective. 
\end{proof}

More can be and has been said on the relationship between
the Grothendieck ring of varieties and of motives.
Proposition~\ref{prop:gillet-soule} and Theorem~\ref{thm:a-motivic-measure-2}
provide the commutative diagram
\[\xymatrix{
  K_0(\Var_F) \ar[r]^{\Phi_F} \ar[d]_{\Psi_F}& \pi_0 \AAA^\ifin(F) \ar[r] \ar[d]& \pi_0 \AAA_T^\ifin(F)\ar[d]\\
  K_0(\mathrm{ChMot}^{\mathrm{eff}}_F) \ar[r]^\iso & 
  K_0(\mathrm{DM}^{\mathrm{eff},\hf}_F)\ar[r] & K_0(\mathrm{DM}^{\hf}_F)}\]
in which the homomorphism 
$K_0(\mathrm{DM}^{\mathrm{eff},\hf}_F)\rightarrow K_0(\mathrm{DM}^{\hf}_F)$
corresponds to inverting $\LL$, the class of the Lefschetz motive.
The latter can be deduced from
Voevodsky's cancellation theorem \cite{voevodsky.cancellation}.
If one imposes rational instead of integral coefficients on the
$T$-spectra and motives above, the canonical functor becomes
a Quillen equivalence for all fields in which $-1$ is a sum of squares,
by a theorem of Morel's. It is known that the canonical homomorphism
\[ K_0(\Var_F)[\LL^{-1}] \rightarrow K_0(\mathrm{DM}^{\hf}_F\otimes \QQ) \]
is not injective \cite[Proposition 7.9]{nicaise.trace}.
Already inverting $2$ in the homotopy category of $T$-spectra
splits it as a product $\SH(F)_+\times \SH(F)_-$ corresponding
to the two idempotents $\frac{1-\varepsilon}{2},\frac{1+\varepsilon}{2}$.
Here $\varepsilon$ is induced by the twist isomorphism on 
$T\smash T$. If $F$ is formally real, the
category $\SH(F)_-$ maps nontrivially to the derived category
of $\ZZ[\tfrac{1}{2}]$-modules. After rationalizing, the category $\SH(F)_+$
is equivalent to the derived category of motives over any field 
$F$ \cite[Theorem 16.2.13]{cisinski-deglise.dm}.
In particular, the canonical homomorphism
\[ \pi_0 \AAA(\mathbf{Sp}^\hf_T(F)\tensor \QQ) \rightarrow K_0(\mathrm{DM}^{\hf}_F \tensor \QQ) \]
is always surjective, but not injective if $F$ is formally real.
See \cite[Theorem 1.5]{jin.real} for an identification of
$\pi_0 \AAA(\mathbf{Sp}^\hf_T(X)\tensor \QQ)$ in terms
of Chow motives and the real \'etale site of the excellent and
separated scheme $X$ of finite Krull dimension.

\section{A trace map}
\label{sec:trace}

The next goal is to produce a trace map on  
the $\Aff^1$-homotopy type $\AAA_T^\du\colon \Sm_F^\op \rightarrow \Sp$ for a field $F$
of characteristic zero and the duality finiteness notion. The
general result \cite[Theorem 6.5]{hss.higher-traces} essentially
provides such a trace. However, when the existence of the motivic trace was
announced at a talk in Heidelberg in 2014, 
the argument proceeded along the lines 
of \cite{vogell}. For the sake of concreteness, this construction of the
trace will be sketched as follows.
In principle, it suffices to fatten the Waldhausen category $\SymSp_{T}^\du(F)=\SymSp_{T}^\hf(F)$ slightly as in \cite{vogell}. 
The fattened Waldhausen category consists of additional duality
data.

\begin{defn}\label{def:spectra-duality-data}
  For a base scheme $S$, let $\mathbf{DSp}(S)$ be the category whose objects
  are triples $(\E^+,\E^-,e^-)$, where
  $\E^+$ is a dualizable symmetric $T$-spectrum,
  $\E^-$ is fibrant symmetric $T$-spectrum, and
  $e^-\colon \E^-\smash \E^+\rightarrow \fib(\one)$
  is a map whose adjoint $\E^-\rightarrow \inthom(\E^+,\fib(\one))$
  is a weak equivalence. A morphism of triples from
  $(\D^+,\D^-,d^-)$ to $(\E^+,\E^-,e^-)$ is a pair
  $\phi^+\colon \D^+\rightarrow \E^+$, $\phi^-\colon \E^-\rightarrow \D^-$
  of maps such that the diagram
  \[\xymatrix{
    \D^-\smash \E^+ \ar[r]^{\phi^-\smash \E^+} \ar[d]_{\D^-\smash \phi^+}& 
    \E^-\smash \E^+ \ar[d]^ {e^-} \\
    \D^-\smash \D^+ \ar[r]^{d^-} & \fib(\one)
  }\]
  commutes. Such a morphism $(\phi^+,\phi^-)$ is 
  a {\em weak equivalence\/} if
  both $\phi^+$ and $\phi^-$ are weak equivalences,
  and a {\em cofibration\/} if $\phi^+$ is a cofibration
  and $\phi^-$ is a fibration.
\end{defn}

Since smashing with a cofibrant symmetric $T$-spectrum preserves
weak equivalences \cite[Proposition 4.19]{jardine.mss}, the 
symmetric $T$-spectrum $\E^-\smash \E^+$ has the correct homotopy type,
even if $\E^-$ is not cofibrant.

\begin{proposition}\label{prop:duality-waldhausen}
  The category $\mathbf{DSp}(S)$ is a Waldhausen
  category. 
\end{proposition}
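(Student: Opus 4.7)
The plan is to check the four requirements in Waldhausen's definition: a zero object, every $\ast \to A$ a cofibration, existence of cobase changes along cofibrations, and the gluing lemma. The zero object is $(\ast,\ast,\ast)$ with the unique trivial pairing; it is both initial and terminal since the $+$-coordinate is initial in $\SymSp_T^\du(S)$ and the $-$-coordinate is terminal among fibrant objects. For an arbitrary $(\E^+,\E^-,e^-)$, the morphism from $(\ast,\ast,\ast)$ is the pair $(\ast \hookrightarrow \E^+, \E^-\to \ast)$, which is a cofibration by definition, as $\ast \to \E^+$ is a cofibration in $\SymSp_T(S)$ and any map to $\ast$ is a fibration.

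The main construction is the cobase change. Given a cofibration $(\phi^+,\phi^-)\colon (\D^+,\D^-,d^-)\to (\E^+,\E^-,e^-)$ and a morphism $(\psi^+,\psi^-)\colon (\D^+,\D^-,d^-)\to (\F^+,\F^-,f^-)$, I would form the pushout $\E^+\cup_{\D^+}\F^+$ in the $+$-direction (which remains dualizable by Proposition~\ref{prop:dualizable}) and the pullback $\E^-\times_{\D^-}\F^-$ in the $-$-direction (which is fibrant, since $\phi^-$ is a fibration between fibrant objects). The new pairing is assembled from
\[ (\E^-\times_{\D^-}\F^-)\smash \E^+ \to \E^-\smash \E^+ \xrightarrow{e^-}\fib(\one) \]
and the analogous composite through $\F^-\smash \F^+\xrightarrow{f^-}\fib(\one)$; these agree on the common restriction over $(\E^-\times_{\D^-}\F^-)\smash \D^+$ precisely because of the two commuting squares that express $(\phi^\pm)$ and $(\psi^\pm)$ as morphisms of triples, so they glue to a map out of $(\E^-\times_{\D^-}\F^-)\smash(\E^+\cup_{\D^+}\F^+)$.

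The main obstacle is verifying that the adjoint of this new pairing is again a weak equivalence, i.e.\ that the cobase change really lands in $\mathbf{DSp}(S)$. The point is that $\inthom(-,\fib(\one))$ converts the pushout $\E^+\cup_{\D^+}\F^+$ into the pullback $\inthom(\E^+,\fib(\one))\times_{\inthom(\D^+,\fib(\one))}\inthom(\F^+,\fib(\one))$, and since $\phi^+$ is a cofibration with cofibrant source and $\fib(\one)$ is fibrant, $\inthom(\E^+,\fib(\one))\to\inthom(\D^+,\fib(\one))$ is a fibration, so this is a homotopy pullback. Similarly, $\E^-\times_{\D^-}\F^-$ is a homotopy pullback by fibrancy of $\phi^-$. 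The hypothesis that each of $\D^-\to \inthom(\D^+,\fib(\one))$, $\E^-\to\inthom(\E^+,\fib(\one))$, and $\F^-\to\inthom(\F^+,\fib(\one))$ is a weak equivalence then implies that the induced map on homotopy pullbacks is a weak equivalence by the cube lemma \cite[Lemma 5.2.6]{Hovey:book}.

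Finally, the gluing lemma in $\mathbf{DSp}(S)$ decomposes coordinate-wise: on the $+$-side it is the ordinary gluing lemma for cofibrations in $\SymSp_T(S)$, while on the $-$-side it is the Quillen-dual statement about pullbacks along fibrations between fibrant objects, which again holds in the model category $\SymSp_T(S)$. Combining these, a weak equivalence of diagrams in $\mathbf{DSp}(S)$ whose horizontal maps are cofibrations induces a weak equivalence on cobase changes, and an essentially identical verification handles that the class of cofibrations is closed under composition and contains isomorphisms, completing the check.
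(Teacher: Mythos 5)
Your proposal is correct and follows essentially the same route as the paper: the pushout is formed as a pushout in the $+$-coordinate and a pullback in the $-$-coordinate, the new pairing is obtained via the isomorphism $\inthom(\E^+\cup_{\D^+}\F^+,\fib\one)\cong\inthom(\E^+,\fib\one)\times_{\inthom(\D^+,\fib\one)}\inthom(\F^+,\fib\one)$, and the duality condition for the pushout is verified by the dual of the gluing (cube) lemma, exactly as in the paper. Only cosmetic remark: ``any map to $\ast$ is a fibration'' should read ``$\E^-\to\ast$ is a fibration because $\E^-$ is fibrant by definition,'' which is what your argument actually uses.
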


\begin{proof}
  The category $\mathbf{DSp}(S)$ is pointed by 
  $(\ast,\ast,\ast \to \fib(\one))$.
  Weak equivalences in $\mathbf{DSp}(S)$ form a subcategory,
  and so do the cofibrations. Every triple is then
  cofibrant, using that the second entry is 
  fibrant.
  Suppose that 
  \[(\B^+,\B^-,b^-) \xleftarrow{(\psi^+,\psi^-)} (\D^+,\D^-,d^-)\xrightarrow{(\phi^+,\phi^-)} (\E^+,\E^-,e^-) \]
  is a diagram. Its pushout is defined as the triple
  $(\E^+\cup_{\D^+}\B^+,\E^-\times_{\D^-}\B^-,c)$ where $c$ is adjoint
  to the map
  \begin{equation}\label{eq:duality-waldhausen} 
    \E^-\times_{\D^-}\B^- \rightarrow \inthom(\E^+\cup_{\D^+}\B^+,\fib\one)
    \iso \inthom(\E^+,\fib\one)\times_{\inthom(\D^+,\fib\one)}
    \inthom(\B^+,\fib\one) 
  \end{equation}
  induced by the adjoints of $b^-$, $d^-$, and $e^-$. 
  The dual of the gluing lemma implies that 
  the map~(\ref{eq:duality-waldhausen}) is a weak equivalence.
\end{proof}

\begin{lemma}\label{lem:duality-waldhausen}
  The forgetful functor $\mathbf{DSp}(S) \rightarrow \SymSp_T^\du(S)$
  is a $K$-theory equivalence.
\end{lemma}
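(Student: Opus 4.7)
The plan is to apply Waldhausen's Approximation Theorem \cite[Theorem 1.6.7]{Waldhausen} to the forgetful functor
\[ F\colon \mathbf{DSp}(S) \rightarrow \SymSp_T^\du(S),\qquad (\E^+,\E^-,e^-)\mapsto \E^+. \]
First I would check that $F$ is exact: zero objects, weak equivalences, cofibrations, and pushouts along cofibrations are preserved directly from the construction in the proof of Proposition~\ref{prop:duality-waldhausen}. Saturation of the weak equivalences in both Waldhausen categories is inherited from the stable model category $\SymSp_T(S)$. A cylinder functor on $\mathbf{DSp}(S)$ can be built componentwise by using the simplicial mapping cylinder on the $+$-component and the simplicial mapping path object on the $-$-component; the pairing $\tilde{e}^-$ is forced by the universal properties, and the cylinder axiom reduces to the usual ones in $\SymSp_T(S)$.

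For the first approximation property, I would verify that $F$ reflects weak equivalences. Let $(\phi^+,\phi^-)\colon (\D^+,\D^-,d^-)\to (\E^+,\E^-,e^-)$ be a morphism with $\phi^+$ a weak equivalence of dualizable symmetric $T$-spectra. Since $\fib(\one)$ is fibrant and $\inthom(-,\fib(\one))$ is a right Quillen bifunctor, $\inthom(\phi^+,\fib(\one))$ is a weak equivalence between fibrant objects. The commuting square defining a morphism in $\mathbf{DSp}(S)$ fits, after passing to adjoints, into a square with $\phi^-$ on one side, $\inthom(\phi^+,\fib(\one))$ on the other, and the weak equivalences $d^{-\#}$ and $e^{-\#}$ closing it up; 2-out-of-3 then forces $\phi^-$ to be a weak equivalence.

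The main step is verifying the second approximation property. Given $A=(\E^+,\E^-,e^-)$ and a map $\alpha\colon \E^+\to \D^+$ in $\SymSp_T^\du(S)$, I would factor $\alpha$ as a cofibration $i\colon \E^+\hookrightarrow \tilde{\D}^+$ followed by a weak equivalence $p\colon \tilde{\D}^+\xrightarrow{\sim}\D^+$; note $\tilde{\D}^+$ is still dualizable. Then define
\[ \tilde{\D}^- \defeq \inthom(\tilde{\D}^+,\fib(\one))\times_{\inthom(\E^+,\fib(\one))} \E^- \]
as the pullback along the fibration $\inthom(i,\fib(\one))$ obtained from SM7. The object $\tilde{\D}^-$ is fibrant, the second projection $\tilde{\D}^-\to \E^-$ is a fibration as the pullback of one, and the first projection $\tilde{\D}^-\to \inthom(\tilde{\D}^+,\fib(\one))$ is a weak equivalence because it is the pullback of the weak equivalence $e^{-\#}$ along the fibration $\inthom(i,\fib(\one))$ between fibrant objects. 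Letting $\tilde{d}^-$ be the adjoint of this first projection produces an object $A'=(\tilde{\D}^+,\tilde{\D}^-,\tilde{d}^-)$ of $\mathbf{DSp}(S)$; the pair $(i,\mathrm{pr}_2)$ is then a cofibration $A\to A'$, and $FA'=\tilde{\D}^+\xrightarrow{p}\D^+$ is the desired weak equivalence.

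The step I expect to be the main obstacle is ensuring that the pullback construction of $\tilde{\D}^-$ is homotopically well-behaved, that is, that it genuinely computes the derived fibre and that the projection to $\inthom(\tilde{\D}^+,\fib(\one))$ really is a weak equivalence; this is where the fibrancy of $\fib(\one)$, the cofibrancy of $i$, and right properness of the stable model structure on $\SymSp_T(S)$ all have to be used together. Once this is in place, Waldhausen's Approximation Theorem immediately yields the claim.
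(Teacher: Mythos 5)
Your proposal is correct, but it takes a genuinely different route from the paper. The paper does not invoke the Approximation Theorem at all: it observes that the forgetful functor admits an exact section $\E\mapsto (\E,\inthom(\E,\fib\one),\mathrm{ev})$, and that for any object $(\E^+,\E^-,e^-)$ the pair $(\id,\flat(e^-))$ is a natural weak equivalence from $(\E^+,\inthom(\E^+,\fib\one),\mathrm{ev})$ to $(\E^+,\E^-,e^-)$; since natural weak equivalences between exact functors induce homotopies on $K$-theory, the section is a homotopy inverse and the lemma follows in two lines, with no cylinder functor or saturation hypotheses to check. Your argument carries the same essential content (the adjoint $e^{-\#}$ being a weak equivalence, and $\inthom(-,\fib\one)$ turning cofibrations of cofibrant objects into fibrations of fibrant ones) but packages it as the two approximation properties: your App~1 verification via the adjoint square and 2-out-of-3 is fine, and your App~2 construction $\tilde{\D}^-=\inthom(\tilde{\D}^+,\fib\one)\times_{\inthom(\E^+,\fib\one)}\E^-$ does produce the required cofibration followed by a weak equivalence. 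Two remarks: the step you flag as the main obstacle does not actually require right properness of $\SymSp_T(S)$ --- all three corners of your pullback are fibrant, and the pullback of a weak equivalence between fibrant objects along a fibration is a weak equivalence in any model category (this is the dual of the cube-lemma argument already used in Proposition~\ref{prop:duality-waldhausen}); on the other hand, the cylinder functor on $\mathbf{DSp}(S)$, which the Approximation Theorem does need, is only sketched in your plan --- equipping the componentwise cylinder/path-object with its duality pairing and checking that its adjoint is a weak equivalence requires exactly the dual gluing lemma again, so it is doable but is real work that the paper's retraction argument avoids entirely. In exchange, your route is more robust (it would apply to forgetful functors without an evident section), while the paper's argument is shorter and yields an explicit homotopy inverse on $K$-theory.
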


\begin{proof}
  The forgetful functor sends the triple $(\E^+,\E^-,e^-)$
  to $\E^+$ and is exact by definition. It admits the exact
  section
  sending $\E$ to the triple $(\E,\inthom(\E,\fib\one),\mathrm{ev})$
  where $\mathrm{ev}\colon \inthom(\E,\fib\one)\smash \E\rightarrow \fib\one$
  is the evaluation map, adjoint to the identity. Moreover,
  there is a natural weak equivalence
  \[  (\E^+,\inthom(\E^+,\fib\one),\mathrm{ev}) \xrightarrow{(\id,\flat(e^-))} 
  (\E^+,\E^-,e^-) \]
  where $\flat(e^-)$ is the adjoint of $e^-$. Hence applying
  the section to the forgetful functor induces a map on
  algebraic $K$-theory which is homotopic to the identity map.
\end{proof}

Recall that $\SymSp_T(S)$ admits the structure of a pointed
simplicial model category in which the $n$-simplices of morphisms 
are given by the maps $\D\smash \Delta^n_+ \rightarrow \E$ of symmetric
$T$-spectra over $S$. The functoriality listed in 
Proposition~\ref{prop:base-change-model} is simplicial. It follows
that the assignment $[n]\mapsto \SymSp_T^\du(S)_n$ is a simplicial
category with constant objects, and so is the assignment 
$[n]\mapsto \mathrm{w}\SymSp_T^\du(S)_n$
restricted to the subcategories of weak equivalences.

\begin{proposition}\label{prop:duality-waldhausen-simplex}
  The category $\mathbf{DSp}(S)_\bullet$ is a simplicial 
  category, and so is the restriction to 
  $\mathrm{w}\mathbf{DSp}(S)_\bullet$, the subcategory of
  weak equivalences.
\end{proposition}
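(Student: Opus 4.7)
The plan is to transport the standard simplicial enrichment of $\SymSp_T(S)$ through the defining data of $\mathbf{DSp}(S)$. For triples $\D=(\D^+,\D^-,d^-)$ and $\E=(\E^+,\E^-,e^-)$, declare the set of $n$-simplices of $\Hom_{\mathbf{DSp}(S)}(\D,\E)$ to consist of pairs $(\phi^+_n,\phi^-_n)$ with $\phi^+_n\colon\D^+\smash\Delta^n_+\to\E^+$ and $\phi^-_n\colon\E^-\smash\Delta^n_+\to\D^-$ such that the $\Delta^n_+$-parametrized analogue of the compatibility square in Definition~\ref{def:spectra-duality-data} commutes; that is, after smashing its source with $\Delta^n_+$ and permuting factors appropriately, the two composites to $\fib(\one)$ built from $e^-$, $d^-$, and the $\phi^\pm_n$ agree. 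At $n=0$ this recovers the defining compatibility, and the simplicial operators on $\Delta^\bullet_+$ preserve the constraint by naturality of the smash product, so $[n]\mapsto\Hom_{\mathbf{DSp}(S)}(\D,\E)_n$ is a simplicial subset of the corresponding product of simplicial hom-sets of $\SymSp_T(S)$.

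Composition of $n$-simplices is the usual simplicial composition of $\SymSp_T(S)$ performed separately on each component, using the diagonal $\Delta^n_+\to\Delta^n_+\smash\Delta^n_+$; explicitly, on the positive component the composite is
\[
\B^+\smash\Delta^n_+\xrightarrow{\B^+\smash\mathrm{diag}}\B^+\smash\Delta^n_+\smash\Delta^n_+\xrightarrow{\phi^+_n\smash\Delta^n_+}\D^+\smash\Delta^n_+\xrightarrow{\psi^+_n}\E^+,
\]
with the formally dual (reverse-order) formula on the negative side. That the composite again satisfies the parametrized compatibility is a diagram chase combining the constraints for the two factors with cocommutativity of the diagonal and symmetry of the smash product; associativity and unital laws are inherited directly from $\SymSp_T(S)$. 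This exhibits $\mathbf{DSp}(S)$ as a category enriched in simplicial sets, equivalently a simplicial object in categories with constant set of objects.

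For the subcategory of weak equivalences, declare an $n$-simplex $(\phi^+_n,\phi^-_n)$ to be a weak equivalence when both $\phi^+_n$ and $\phi^-_n$ are weak equivalences in $\SymSp_T(S)$; at $n=0$ this recovers Definition~\ref{def:spectra-duality-data}. Every simplicial operator $\Delta^m_+\to\Delta^n_+$ is a weak equivalence of pointed simplicial sets, and smashing with a cofibrant symmetric $T$-spectrum preserves weak equivalences \cite[Proposition 4.19]{jardine.mss}, so face and degeneracy maps preserve this class by two-out-of-three. Closure under composition follows from the same input applied to the diagonal $\Delta^n_+\to\Delta^n_+\smash\Delta^n_+$, which is itself a weak equivalence. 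Thus $\mathrm{w}\mathbf{DSp}(S)_\bullet$ is a simplicial subcategory of $\mathbf{DSp}(S)_\bullet$. The main, but rather mild, obstacle throughout is the diagrammatic bookkeeping needed to confirm that composition respects the parametrized duality constraint, which reduces to tracking the factors of $\Delta^n_+$ through smash products and symmetry isomorphisms.
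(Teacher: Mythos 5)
Your proposal is exactly the paper's construction: the $n$-simplices of morphisms are the pairs $(\D^+\smash\Delta^n_+\to\E^+,\ \E^-\smash\Delta^n_+\to\D^-)$ satisfying the $\Delta^n_+$-parametrized compatibility condition, with composition, identities, and the weak-equivalence subcategory checked directly, which is precisely what the paper asserts (and leaves as "straightforward to check"). One small repair: since $\E^-$ is only assumed fibrant, on the negative components you should justify that face, degeneracy, and diagonal maps preserve weak equivalences by observing that $\Delta^m_+\to\Delta^n_+$ and $\Delta^n_+\to\Delta^n_+\smash\Delta^n_+$ are pointed simplicial homotopy equivalences, hence are sent to weak equivalences after smashing with any symmetric $T$-spectrum, rather than by appealing to cofibrancy via \cite[Proposition 4.19]{jardine.mss}, which applies as stated only on the cofibrant (positive) side.
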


\begin{proof}
  The $n$-simplices of morphisms are given by
  pairs $(\D^+\smash \Delta^n_+ \rightarrow \E^+,\E^-\smash \Delta^n_+\rightarrow \D^-)$
  satisfying the appropriate compatibility condition. 
  The required axioms are straightforward to check.
\end{proof}

\begin{lemma}\label{lem:duality-waldhausen-simplicial}
  The natural inclusion
  \[ \mathrm{w}\mathbf{DSp}(S) \xrightarrow{\kappa}
  \mathrm{w}\mathbf{DSp}(S)_\bullet 
   \]
  induces a weak equivalence after geometric realization.
\end{lemma}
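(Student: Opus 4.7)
The plan is to exhibit $\kappa$ as a simplicial weak equivalence by showing the simplicial direction of $\mathrm{w}\mathbf{DSp}(S)_\bullet$ is homotopically trivial. This adapts the style of argument of Vogell~\cite{vogell} to the present duality-enriched setting.

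Since $\Delta^q$ is contractible, the simplicial parameterization $\phi^+\colon \D^+\smash \Delta^q_+\to \E^+$ should be homotopically redundant: a $q$-simplex morphism should be equivalent to its restriction to a vertex. Concretely, use the standard simplicial contraction $h\colon \Delta^q\times \Delta^1 \to \Delta^q$ onto the last vertex, given on $k$-simplices by $h(f,g)(i) = \max(f(i),\, g(i)\cdot q)$; it specializes to $\id_{\Delta^q}$ at $\{0\}\subset \Delta^1$ and to the constant map at vertex $q$ at $\{1\}\subset \Delta^1$. Precomposing each $q$-simplex pair $(\phi^+,\phi^-)$ with $\D^+\smash h_+$ and $\E^-\smash h_+$ respectively defines, at each fixed nerve degree $p$, a simplicial homotopy on the simplicial set $[q]\mapsto N_p\mathrm{w}\mathbf{DSp}(S)_q$ interpolating between the identity and a map whose image lies in the degenerate copy of $N_p\mathrm{w}\mathbf{DSp}(S)$ produced by $\kappa$.

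The intermediate interpolating morphisms remain weak equivalences because $h(\cdot,g)$ is a self-map of the contractible simplicial set $\Delta^q$ hence a weak equivalence of pointed simplicial sets, and smashing preserves weak equivalences: directly for the cofibrant $\D^+$ via the Quillen bifunctor property, and for the fibrant (but possibly non-cofibrant) $\E^-$ via the observation cited in the remark after Definition~\ref{def:spectra-duality-data}. The duality compatibility $e^-\circ (\phi^-\smash \E^+)=d^-\circ (\D^-\smash \phi^+)$ is preserved since the same operator $h$ is applied symmetrically to both $\phi^+$ and $\phi^-$ through their common $\Delta^q_+$ parameter, and the compatibility square is natural in the $\Delta^q_+$ variable.

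Combined with the tautology that the simplicial-level-$0$ restriction of $\kappa$ is the identity, this exhibits each row $[q]\mapsto N_p\mathrm{w}\mathbf{DSp}(S)_q$ of the bisimplicial nerve as simplicially equivalent to the constant simplicial set $N_p\mathrm{w}\mathbf{DSp}(S)$. The realization lemma for bisimplicial sets then assembles these row-wise equivalences into a weak equivalence $|N\kappa|$. The main obstacle is that the naive candidate for a strict simplicial retraction (\emph{restrict to the last vertex}) fails naturality with respect to face operators omitting the last vertex; accordingly, the argument cannot proceed via a strict deformation retract, but must extract the required equivalence purely from the symmetric simplicial homotopy above, whose compatibility with both the weak-equivalence property and the duality constraint is the key delicate point.
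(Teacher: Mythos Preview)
Your argument has a genuine gap. You apply the realization lemma by fixing the nerve degree $p$ and claiming that the inclusion of the constant simplicial set $N_p\mathrm{w}\mathbf{DSp}(S)$ into $[q]\mapsto N_p\mathrm{w}\mathbf{DSp}(S)_q$ is a weak equivalence. But this target simplicial set is, up to the duality decoration, $\coprod_{(\D_0,\dotsc,\D_p)}\prod_i \Map^w(\D_{i-1},\D_i)$, and your map is the inclusion of its set of $0$-simplices as a discrete simplicial set. That is a weak equivalence only if every component of every such mapping space is contractible, which is false already for $\Map^w(\one,\one)$. The ``simplicial homotopy'' you build from the contraction $h$ is not one: the compatibility $h_\sigma\circ d_j = d_j\circ h_{d_j^\ast\sigma}$ fails (take $q=2$, $j=2$, $\sigma(0)=0$, $\sigma(1)=\sigma(2)=1$, and evaluate at $i=1$). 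You notice the non-naturality in your last paragraph, but the proposed resolution (``extract the required equivalence purely from the symmetric simplicial homotopy above'') does not exist, because the homotopy itself does not assemble in the $q$-direction.

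The paper's proof runs the realization lemma in the \emph{other} bisimplicial direction: fix the enrichment level $m$ and show that the functor $s_m^\ast\colon \mathrm{w}\mathbf{DSp}(S)\to \mathrm{w}\mathbf{DSp}(S)_m$ induces a weak equivalence on nerves. In that direction there is an honest retraction $d_m^\ast$ given by evaluation at the last vertex (a functor, identity on objects), and the contraction of $\Delta^m$ onto that vertex supplies a homotopy $s_m^\ast d_m^\ast\simeq \id$ on nerves. Moreover the paper first reduces, via Lemma~\ref{lem:duality-waldhausen}, to the forgetful target $\mathrm{w}\mathbf{SymSp}^\du(S)_\bullet$, so the duality-compatibility check you worry about never arises. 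If you want to salvage your direct approach on $\mathbf{DSp}(S)$, switch the direction of the realization lemma; the column-wise statement is the one that is actually true.
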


\begin{proof}
  Lemma~\ref{lem:duality-waldhausen}, or rather
  its proof, implies that the forgetful functor induces the following
  diagram
  \[\xymatrix{
    \mathrm{w}\mathbf{DSp}(S) 
    \ar[r]^\kappa \ar[d]&
    \mathrm{w}\mathbf{DSp}_\bullet(S) \ar[d] \\
     \mathrm{w}\mathbf{SymSp}^\du_T(S) \ar[r]^\kappa &
    \mathrm{w}\mathbf{SymSp}^\du_T(S)_\bullet 
  }\]
  whose vertical arrows induce weak equivalences after geometric
  realization. It suffices to prove that the lower horizontal
  arrow has the same property.
  The inclusion $\kappa$ is induced by the collection of
  degeneracy maps $s_m\colon \Delta^m \rightarrow \Delta^0$. 
  Let $d_m\colon \Delta^0\rightarrow \Delta^m$ be the inclusion
  of the $m$-th vertex. By the realization lemma, it suffices to prove that the
  composition 
  \[  \mathrm{w}\mathbf{SymSp}^\du_T(S)_m \xrightarrow{d_m^\ast}
  \mathrm{w}\mathbf{SymSp}^\du_T(S) \xrightarrow{s_m^\ast} \mathrm{w}\mathbf{SymSp}^\du_T(S)_m\]
  is homotopic to the identity for every $m$. This follows from the fact
  that $\Delta^m$ simplicially contracts onto its last vertex.
\end{proof}

Let $(\E^+,\E^-,e^-)$ be an object in $\mathbf{DSp}(S)$.
Consider the simplicial set of maps of symmetric $T$-spectra
over $S$ from $\one$ to $\fib(\E^+\smash \E^-)$. The aim is to
modify this simplicial set to consist of only those maps
which -- together with $e^-$ -- 
express $\E^+$ and $\E^-$ as dual objects in the 
stable homotopy category $\SH(S)$. 
A map $\one \rightarrow \fib(\E^+\smash \E^-)$ induces a composition
\[ \E^+=\one \smash \E^+ \rightarrow \fib(\E^+\smash \E^-)\smash \E^+
\xrightarrow{\sim} \fib(\E^+\smash \E^-\smash \E^+) \xrightarrow{\fib(\E^+\smash e^-)}
\fib(\E^+\smash \fib(\one)) \] 
(as well as a similar composition 
$\cof(\E^-)\rightarrow \fib(\fib(\one)\smash \E^-)$ where $\cof$
is a cofibrant replacement functor).
There is a preferred map $z\colon\E^+ \rightarrow \fib(\E^+\smash \fib(\one))$, 
given by unit and
replacement natural transformations. Let
\[ H(\E^+,\E^-,e^-) =\bigl\lbrace 
H\colon \E^+\smash \Delta^1_+\to \fib(\E^+\smash \fib(\one))
\mathrm{\ with\ } H{\vert}_{\E^+ \smash 0_+}= z \bigr\rbrace \]
be the simplicial set of 
simplicial homotopies starting at the respective preferred map. 
By construction, $H(\E^+,\E^-,e^-)$ is a fibrant
simplicial set which simplicially contracts to the zero simplex
given by the preferred map. Moreover, it maps via 
an ``endpoint'' Kan fibration to the 
simplicial set of maps in the terminal corner of
the following diagram whose pullback is the
desired modification:
\begin{equation}\label{eq:modification}
  \xymatrix{
  D(\E^+,\E^-,e^-) \ar[r] \ar[d] &  \sSet(\one,\fib(\E^+\smash \E^-)) \ar[d]\\
  H(\E^+,\E^-,e^-)  \ar[r] &  \sSet(\E^+,\fib(\E^+\smash \fib\one))}
\end{equation}
The condition on $e^-$ guarantees that the vertical map on the right
hand side of diagram~(\ref{eq:modification}) is a weak equivalence,
and the horizontal arrows depict fibrations.
Hence $D(\E^+,\E^-,e^-)$ is a contractible fibrant
simplicial set. Its zero simplices are 
maps $e^+\colon \one\rightarrow \fib(\E^+\smash \E^-)$, together with a simplicial homotopy providing that $\E^+\smash e^- \circ e^+\smash \E^+$
coincides with $\id_{\E^+}$ in the motivic stable homotopy category of $S$.
Such a zero simplex
represented
by the tuple 
\[ ((\E^+,\E^-,e^-),e^+,H) \]
maps naturally to the composition 
\[ \one \xrightarrow{e^+} \fib(\E^+\smash \E^-)
\xrightarrow{\iso} \fib(\E^-\smash\E^+)
\xrightarrow{\fib(e^-)} \fib(\fib(\one)), \]
which is a zero simplex in $\fib(\fib(\one))(S)$
representing the Euler characteristic of $\E^+$ \cite{May:traces}.
More generally, an $n$-simplex maps to an $n$-simplex
in $\fib(\fib(\one))(S)$. 
A similar variant $D(\underline{\E})$ exists for an $n$-simplex
\[\underline{\E} =  (\E_0\xrightarrow{\sim} \E_1\xrightarrow{\sim} \dotsm \xrightarrow{\sim} \E_n) \]
of the nerve of $\mathrm{w}\mathbf{DSp}_\bullet(S)$, starting with
maps from $\one$ to $\fib(\E^+_0\smash \E^-_n)$ and using the duality
datum $\E^+_0\smash \E^-_n \rightarrow \fib(\one)$ obtained via compositions
instead. 
The map to $\fib(\fib(\one))$
described above for $D(\E^+,\E^-,e^-)$ extends to $D(\underline{\E})$ for
$\underline{\E}$ such an $n$-simplex.
A map $\alpha\colon [m]\rightarrow {[n]}$ in $\Delta$ induces a
map $D(\underline{\E})\rightarrow D(\alpha^\ast(\underline{\E}))$. 
Consider the induced map of bisimplicial sets
\[ \coprod_{\underline{\E}\in \mathrm{w}\mathbf{DSp}_\bullet(S)} D(\underline{\E})
 \rightarrow \coprod_{\underline{\E}\in \mathrm{w}\mathbf{DSp}_\bullet(S)} \{\underline{\E}\}.\]
Since $D(\underline{\E})$ is contractible, this map is a weak equivalence 
by the realization lemma. Moreover, it maps
to $\fib(\fib\one)(S)$, and this map is natural in $S$.
It may be regarded as the zeroth
level of a map of $S^1$-spectra 
\[ \AAA_T^\du(S) \rightarrow \fib(\fib\one)(S), \]
which is natural in $S$. Instead of providing a spectrum-level map
by explicit constructions similar to those appearing in the proof 
of \cite[Theorem 0.1]{May:traces}, however,
\cite[Theorem 6.5]{hss.higher-traces}, which in turn refers
to~\cite{toen-vezzosi.chern}, will be invoked.

\begin{theorem}\label{thm:s-trace}
  Let $S$ be a Noetherian finite-dimensional base scheme.
  There exists a multiplicative map $\AAA^\du_T(S) \rightarrow \fib(\one)(S)$
  of symmetric ring $S^1$-spectra which is natural in $S$.
  On path components it sends the class given by the
  dualizable $T$-spectrum $\E$ to the Euler characteristic
  of $E$, considered as an endomorphism $\chi(\E)\colon \one\rightarrow \fib(\one)$.
\end{theorem}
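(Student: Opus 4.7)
The plan is to construct the trace as a natural transformation of symmetric $S^1$-spectra by working with the fattened Waldhausen category $\mathbf{DSp}(S)$ from Definition~\ref{def:spectra-duality-data}, which carries explicit duality data. Since by Lemma~\ref{lem:duality-waldhausen} the forgetful functor $\mathbf{DSp}(S)\rightarrow \SymSp_T^\du(S)$ induces a $K$-theory equivalence, and by Lemma~\ref{lem:duality-waldhausen-simplicial} the inclusion into the simplicially enriched version $\mathbf{DSp}_\bullet(S)$ induces a weak equivalence on nerves, the target of the trace is unchanged if I build it from the nerve of $\mathrm{w}\mathbf{DSp}_\bullet(S)$. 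This replaces the problem of ``choosing a coevaluation functorially'' with an honest simplicial construction.

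First I would establish the zeroth-level map. For each $n$-simplex $\underline{\E}$ of the nerve, the contractible fibrant simplicial set $D(\underline{\E})$ described just before the theorem parameterizes coherent coevaluations together with homotopies trivializing the triangle identity up to preferred replacement maps. Assembling these gives a bisimplicial set which, by the realization lemma applied to the fact that each $D(\underline{\E})$ is contractible, maps by a weak equivalence to the constant bisimplicial set $\{\underline{\E}\}$ spanned by the objects. The Euler characteristic assignment sending $((\E^+,\E^-,e^-),e^+,H)$ to $\fib(e^-)\circ e^+$ defines a natural map from this bisimplicial set to $\fib(\fib(\one))(S)$. Inverting the acyclic map yields a zero-level trace out of $\AAA^\du_T(S)$.

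To promote this to a map of symmetric $S^1$-spectra I would iterate the construction through Waldhausen's $\Sdot$-construction. A filtered sequence of cofibrations in $\SymSp_T^\du(S)$ has subquotients which remain dualizable by Proposition~\ref{prop:dualizable}, and duality data can be chosen compatibly along the filtration; each level of $\mathrm{w}\Sdot^{(n)}\mathbf{DSp}(S)$ then admits an analogous contractible space of coherent duality data mapping naturally to $\fib(\fib(\one))(S)$. Rather than track this combinatorics by hand, the clean approach is to invoke \cite[Theorem 6.5]{hss.higher-traces}, which upgrades the pointwise trace to a map of ring $S^1$-spectra in full generality; applied to the symmetric monoidal stable $\infty$-category underlying $\SymSp_T^\du(S)$ it directly yields the required $\AAA^\du_T(S)\rightarrow \fib(\one)(S)$. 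Naturality in $S$ is then formal, since the construction only uses the symmetric monoidal structure and the fibrant replacement of $\one$, and these are preserved by $f^\ast$ for any morphism $f\colon X\rightarrow Y$ of base schemes thanks to Lemma~\ref{lem:base-change-finite} and strict symmetric monoidality of $f^\ast$.

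The main obstacle is precisely the step of promoting the level-$0$ construction, which is essentially produced already in the discussion preceding the theorem, to a coherent map of symmetric ring spectra across all $\Sdot$-levels and all base schemes simultaneously. Multiplicativity under the smash product of dualizable objects is pointwise clear from the general Euler-characteristic formalism of \cite{May:traces}, but coherence of the coevaluation data under the required simplicial, filtration, and base-change functoriality is delicate; this is why the argument ultimately defers to the higher-categorical machinery of \cite{hss.higher-traces} rather than attempting an explicit spectrum-level construction along the lines of \cite{vogell}.
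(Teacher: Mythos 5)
Your proposal follows essentially the same route as the paper: sketch the Vogell-style level-zero construction via $\mathbf{DSp}(S)$ and the contractible simplicial sets $D(\underline{\E})$, then defer the spectrum-level, multiplicative, and base-change-natural statement to \cite[Theorem 6.5 and Remark 6.6]{hss.higher-traces}. The only ingredient the paper makes explicit that you elide is the identification, as symmetric ring spectra, of the Waldhausen $K$-theory $\AAA^\du_T(S)$ with the $\infty$-categorical $K$-theory of the underlying small, stable, idempotent-complete, rigid symmetric monoidal $\infty$-category (via \cite[Theorem 7.8]{bgt} and \cite[Proposition 5.8]{bgt-2}), which is needed because the trace of \cite{hss.higher-traces} is defined on the latter.
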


\begin{proof}
  The category $\mathbf{SymSp}^\du_T(S)$ of cofibrant dualizable 
  symmetric $T$-spectra over $S$ gives rise to a small, stable, 
  idempotent-complete, rigid symmetric monoidal $\infty$-category
  $\mathcal{E}(S)$,
  naturally in $S$. Hence it fits into the general framework
  of \cite{hss.higher-traces}. More specifically, 
  \cite[Theorem 6.5 and Remark 6.6]{hss.higher-traces} apply to give a
  map of symmetric ring spectra
  \[ \AAA(\mathcal{E}(S)) \rightarrow \fib(\one)(S), \]
  whose domain is the $\infty$-categorical 
  $K$-theory of $\mathcal{E}(S)$ as introduced in
  \cite[Remark 1.2.2.5]{lurie.ha}, and whose target is regarded as
  the endomorphism $S^1$-spectrum of the unit in $\mathcal{E}(S)$.
  The Waldhausen $K$-theory $\AAA^\du_T(S)$ of $\mathbf{SymSp}^\du(S)$
  maps via a natural weak equivalence to $\AAA(\mathcal{E}(S))$ \cite[Theorem 7.8]{bgt}, 
  and this so as a symmetric
  ring spectrum \cite[Proposition 5.8]{bgt-2}. 
  This provides the desired multiplicative map of $S^1$-spectra.
  For the statement regarding path components, see 
  \cite[Remark 6.6]{hss.higher-traces}.
\end{proof}

It would be very interesting to relate the homotopy fiber
of the trace from Theorem~\ref{thm:s-trace}
with geometrical data.

\begin{corollary}\label{cor:trace-field-zero}
  Let $F$ be a field of characteristic zero. There
  exists a multiplicative trace map
  \[ \AAA^\hf_T(F) \rightarrow \fib(\one)(F), \]
  which induces a ring homomorphism
  \[ \pi_0\AAA^\hf_T(F) \rightarrow \pi_0\fib(\one)(F)\cong \mathrm{GW}(F) \]
  to the Grothendieck-Witt ring of $F$.
\end{corollary}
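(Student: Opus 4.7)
The plan is to assemble the corollary from three ingredients already established: the trace of Theorem~\ref{thm:s-trace}, the coincidence of finiteness notions over a field of characteristic zero in Proposition~\ref{prop:finite-dualizable}, and Morel's computation of $\pi_0$ of the motivic sphere spectrum.

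First I would specialize Theorem~\ref{thm:s-trace} to $S = \Spec{F}$, which supplies a map of symmetric ring $S^1$-spectra
\[ \AAA^\du_T(F) \rightarrow \fib(\one)(F) \]
sending the class of a dualizable $T$-spectrum $\E$ to its categorical Euler characteristic in $\pi_0\fib(\one)(F)$. Next, by Proposition~\ref{prop:finite-dualizable}, the Waldhausen categories $\SymSp_T(F)^\hf$ and $\SymSp_T(F)^\du$ coincide as subcategories of $\SymSp_T(F)$, with the same weak equivalences and cofibrations induced from the ambient model structure; consequently the identity is an exact isomorphism and induces an equivalence $\AAA^\hf_T(F) \xrightarrow{\sim} \AAA^\du_T(F)$ of symmetric ring spectra. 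Composing gives the desired multiplicative trace $\AAA^\hf_T(F) \rightarrow \fib(\one)(F)$.

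For the identification of $\pi_0$, I would invoke Morel's theorem identifying the zeroth motivic stable homotopy group of the sphere over a perfect field with the Grothendieck-Witt ring $\mathrm{GW}(F)$ of symmetric bilinear forms. Since $\fib(\one)(F)$ is the value at $\Spec F$ of a fibrant replacement of the motivic sphere $T$-spectrum (equivalently its $S^1$-spectrum of bigraded homotopy groups evaluated in bidegree $(0,0)$), one has
\[ \pi_0\fib(\one)(F) \cong \pi_{0,0}\one_F \cong \mathrm{GW}(F). \]
Applying $\pi_0$ to the trace then yields the claimed ring homomorphism, multiplicativity being inherited from the fact that the trace is a map of symmetric ring spectra.

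The only step that requires any care is verifying that the Waldhausen equivalence $\AAA^\hf_T(F) \simeq \AAA^\du_T(F)$ is one of \emph{ring} spectra, rather than merely of underlying spectra. This is automatic once one observes that the smash product of symmetric $T$-spectra preserves both homotopy finite and dualizable objects (the latter being closed under smash products in any symmetric monoidal stable model category, and both classes being closed under weak equivalences between cofibrant objects), so that Proposition~\ref{prop:finite-dualizable} identifies the two as symmetric monoidal Waldhausen categories and hence induces an equivalence of ring spectra. With that in place the corollary is immediate.
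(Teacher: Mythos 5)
Your proposal is correct and follows the paper's own argument: the corollary is obtained exactly by combining Theorem~\ref{thm:s-trace} with Proposition~\ref{prop:finite-dualizable}, with Morel's computation of the endomorphisms of the motivic sphere spectrum giving $\pi_0\fib(\one)(F)\cong\mathrm{GW}(F)$. Your extra worry about the ring structure is harmless but unnecessary, since by Proposition~\ref{prop:finite-dualizable} the two Waldhausen categories literally coincide, so no comparison of multiplicative structures is needed.
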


\begin{proof}
  This follows from Theorem~\ref{thm:s-trace} and
  Proposition~\ref{prop:finite-dualizable}. The statement
  on path components involves Morel's theorem \cite[Cor.~1.25]{morel.field} 
  computing the path components of the endomorphism
  spectrum of the sphere $T$-spectrum. More precisely, the
  global sections of $\fib(\one)$ coincide with the infinite
  $T$-loop space (or spectrum) associated with a fibrant replacement of
  the $T$-suspension spectrum of the zero sphere $S^0_F$ over $F$.
  Hence by construction its path components form the endomorphism
  ring of $\one\in \SH(F)$, which Morel computed to be naturally
  isomorphic to the Grothendieck-Witt ring of $F$. 
\end{proof}

The composition of the motivic measure
\[ K_0(\Var_F)\rightarrow \pi_0\AAA^\hf_T(F)=\pi_0\AAA^\du_T(F) \]
induced by Theorem~\ref{thm:a-motivic-measure} and
the ring homomorphism from Corollary~\ref{cor:trace-field-zero}
provides a motivic measure to the Grothendieck-Witt ring on $F$.
For formal reasons -- see, for example,~\cite{May:traces} --
it extends the categorical Euler characteristic
$K_0(\SH^\du(F))\to [\one,\one]_{\SH(F)}\cong \mathrm{GW}(F)$ used
in refined enumerative geometry by Levine, Wickelgren, and others \cite{levine.mhreg}.
As another application of Theorem~\ref{thm:s-trace}, an interesting
$\Aff^1$-homotopy type results.

\begin{corollary}\label{cor:dualizable-nontrivial}
  Let $S$ be a Noetherian finite-dimensional base scheme.
  The fibrant replacement of the presheaf $\AAA^\du_T\in \SymSp(S)$
  factors the unit map
  \[ S^0 \to \Omega^\infty_T\Sigma^\infty_TS^0  \to \Omega_T^\infty \fib(\one)\in \SymSp(S).\]
  In particular, the $\Aff^1$-homotopy type associated with $\AAA^\du_T$ is
  nontrivial, and the global sections of  its $\Aff^1$-path components $\pi_0^{\Aff^1}\AAA^\du_T$  factor the motivic measure
  \[ K_0(\Var_F) \to \pi_0^{\Aff^1}\AAA^\du_T(F) \to \mathrm{GW}(F) \]
  from Corollary~\ref{cor:trace-field-zero} in case $S=\mathrm{Spec}(F)$ is the spectrum
  of a field $F$ of characteristic zero.
\end{corollary}

\begin{proof}
  The proof of Corollary~\ref{cor:trace-field-zero} already mentioned that
  the global sections of $\fib(\one)$ over any Noetherian finite-dimensional
  base scheme such as $X\in \Sm_S$ is the infinite $T$-loop space of
  the sphere spectrum $\one_X\in \SH(X)$, viewed as a symmetric $S^1$-spectrum.
  Hence the naturality in Theorem~\ref{thm:s-trace} provides a map
  \[ \AAA^\du_T \to \Omega_T^\infty \fib(\one)\in \SymSp(S), \]
  which sectionwise is a multiplicative map of symmetric ring spectra.
  In particular, this map is naturally compatible with the unit.
  Since its target $\Omega_T^\infty \fib(\one)$ is fibrant
  by construction, the map factors over a fibrant replacement
  of $\AAA^\du_T$, giving
  rise to the claimed factorization
  of the unit map. The remaining statement then follows from
  Corollary~\ref{cor:trace-field-zero}.
\end{proof}

\bigskip

{\bf Acknowledgements}: 
Friedhelm Waldhausen suggested to consider algebraic $K$-theory
of motivic spaces to me a long time ago. I thank him for his
support during many years. Basically all results except the ones listed in
Section~\ref{sec:trace} were sketched during a short
presentation at the Union College Mathematics Conference 2003.
I thank Lars Hesselholt and Marc Levine for discussions and encouragement
after further talks I gave on the subject in 
2011 and 2015, respectively. Finally I thank Bj{\o}rn Ian Dundas and
an anonymous referee for very helpful comments.


\end{document}